\newtheorem{proposition}{Proposition}
\newtheorem{theorem}[proposition]{Theorem}
\newtheorem{lemma}[proposition]{Lemma}
\newtheorem{corollary}[proposition]{Corollary}
\theoremstyle{remark}
\newtheorem{remark}[proposition]{Remark}
\theoremstyle{definition}
\newtheorem{definition}[proposition]{Definition}
\numberwithin{equation}{section}
\numberwithin{proposition}{section}
\newcommand{\dist}{\text{dist}}
\newcommand{\NN}{\mathbb{N}}
\newcommand{\PP}{\mathbb{P}}
\newcommand{\RR}{\mathbb{R}}
\newcommand{\EE}{\mathbb{E}}
\newcommand{\ep}{\epsilon}
\newcommand{\ve}{\varepsilon}
\DeclareMathOperator*{\limsups}{limsup^{\ast}}
\DeclareMathOperator*{\liminfs}{liminf_{\ast}}
\newcommand{\al}{\alpha}
\newcommand{\ga}{\gamma}
\newcommand{\la}{\lambda}
\newcommand{\W}{\mathbf{X}}
\newcommand{\B}{\mathbf{B}}
\newcommand{\Ga}{\Gamma}
\newcommand{\La}{\Lambda}
\newcommand{\X}{\Xi}
\newcommand{\norm}[1]{\|#1\|}
\newcommand{\tr}{\mbox{tr}}
\newcommand{\di}{\mbox{div}} 
\newcommand{\id}{\mbox{Id}} 
\newcommand{\rst}[1]{\ensuremath{{\mathbin\upharpoonright}%
\raise-.5ex\hbox{$#1$}}}
\let\emptyset\varnothing
\newcommand{\avg}{\mu}
\newcommand{\bmo}{W^{1}}
\newcommand{\bmd}{W^{\mathbf{d}}}
\newcounter{jscan}
\newcounter{aprimescan}
\newcounter{gscan}
\newcounter{bwscan}
\newcounter{cscan}
\newcounter{hscan}
\newcounter{hprimescan}
\newcounter{fscan}
\newcounter{pscan}
\newcounter{sscan}
\newcounter{iscan}
\newcounter{rscan}
\newcounter{ascan}
\newcounter{rrscan}
\newcounter{fpscan}
\newcounter{goscan}
\newcounter{gfscan}
\newcounter{bscan}
\renewcommand{\thegfscan}{G5}
\renewcommand{\thegoscan}{G0}
\renewcommand{\thejscan}{J\arabic{jscan}}
\renewcommand{\thegscan}{G\arabic{gscan}}
\renewcommand{\thecscan}{$\mathcal{C}$\arabic{cscan}}
\renewcommand{\theascan}{A\arabic{ascan}}
\renewcommand{\theaprimescan}{A\arabic{aprimescan}'}
\numberwithin{equation}{section}
\newcommand\restr[2]{{% we make the whole thing an ordinary symbol
  \left.\kern-\nulldelimiterspace % automatically resize the bar with \right
  #1 % the function
  \vphantom{\big|} % pretend it's a little taller at normal size
  \right|_{#2} % this is the delimiter
  }}
\newcommand{\bP}{\mathbb{P}}
\newcommand{\E}{\mathbb{E}}
\newcommand{\N}{\mathbb{N}}
\newcommand{\V}{\mathbb{V}}
\newcommand{\Z}{\mathbb{Z}}
\newcommand{\R}{\mathbb{R}}
\newcommand{\indc}{\mathds{1}}
\newcommand{\eps}{\epsilon}
\newcommand{\cF}{\mathcal{F}}
\newcommand{\cS}{\mathcal{S}}
\newcommand{\cU}{\mathcal{U}}
\renewcommand{\X}{\mathbf{X}}
\def\restrict#1{\raise-.5ex\hbox{\ensuremath|}_{#1}}
\begin{document}

\title[Generalized Front Propagation for Stochastic Models]{Generalized Front Propagation for Spatial Stochastic Population Models}

\author{Thomas Hughes$^\dagger$}
\address{$^\dagger$Department of Mathematical Sciences, University of Bath, Claverton Down, Bath, United Kingdom BA2 7AY}
\email{$^\dagger$th2275@bath.ac.uk}

\author{Jessica Lin$^\ast$}
\address{$^\ast$Department of Mathematics, McGill University, Burnside Hall, 805 Sherbrooke Street West, Montreal, QC, Canada H3A 0B9}
\email{$^\ast$jessica.lin@mcgill.ca}

\keywords{Interacting Particle Systems, Mean-Curvature Flow, Spatial Lambda Fleming Viot, Voter Model Perturbations, Sexual Reproduction Model, Branching Brownian Motion}
\subjclass[2010]{60J85, 60K35, 92D25}

\maketitle

\begin{abstract}
We present a general framework which can be used to prove that, in an annealed sense, rescaled 
spatial stochastic population models converge to generalized propagating fronts. Our work is motivated by recent results of Etheridge, Freeman, and Penington \cite{EFP2017} and Huang and Durrett \cite{DH2021}, who proved convergence to classical mean curvature flow (MCF) for certain spatial stochastic processes, up until the first time when singularities of MCF form. Our arguments rely on the level-set method and the abstract approach to front propagation introduced by Barles and Souganidis \cite{BS}. 

This approach is amenable to stochastic models equipped with moment duals which satisfy certain general and verifiable properties. Our main results improve the existing results in several ways, first by removing regularity conditions on the initial data, and second by establishing convergence beyond the formation of singularities of MCF. In particular, we obtain a general convergence theorem which holds globally in time. This is then applied to all of the models considered in \cite{EFP2017} and \cite{DH2021}. 
\end{abstract}

\section{Introduction}

\subsection{Broad Ideas and Informal Discussion of the Main Results} 
We are interested in spatial stochastic population models which, asymptotically, exhibit phase separation in an annealed sense; by this, we mean that the expectation of the population concentrates to two stable states, and we aim to characterize how the states evolve with time. Phase separation occurs in numerous models arising in mathematical biology and mathematical physics which exhibit bistability. The approach presented here is general in nature, but the particular problems we consider model evolutionary and ecological phenomena in biological systems.

Let $(w_{t}, t\geq 0)$ denote a generic stochastic process taking values in $[0,1]^E$, where $E$ is some spatial domain which will usually be $\R^{\mathbf{d}}$ or $\Z^{\mathbf{d}}.$ Under a suitable scaling by a parameter $\eps>0$, we consider the rescaled process $(w^{\eps}_{t}, t\geq 0)_{\eps>0}$. Let us suppose that 0 and 1 are so-called stable states, where the notion of stability will be made precise later. Let $p: \RR^{\mathbf{d}}\rightarrow [0,1]$ be such that there exists a non-empty open set $\Theta_{0}\subseteq \R^{\mathbf{d}}$ satisfying 
\begin{equation*}
\Theta_{0}=\left\{p(\cdot)<\sfrac{1}{2}\right\} \quad\text{and}\quad \overline{\Theta_{0}}^{c}=\left\{p(\cdot)>\sfrac{1}{2}\right\}.
\end{equation*}
We then let $\Ga_{0}:=\partial \Theta_{0}$. Suppose that $w^{\eps}_{0}=p$, and we write $\bP^\epsilon_{p}$ to denote the law of the process $w^{\eps}$, started from initial condition $p$, and similarly we denote the associated expectation operator by $\E^\epsilon_{p}$. 

In broad terms, we prove the following: there exist open sets $\left\{\Theta^{0}_{t}\right\}_{t>0}, \left\{\Theta^{1}_{t}\right\}_{t>0}\subseteq \R^{\mathbf{d}}$ such that, 
\begin{equation}\label{e.genericconv}
\lim_{\eps\to 0}\EE^{\eps}_{p}[w^{\eps}_{t}(x)]=\begin{cases} 0&\text{locally uniformly in $\bigcup_{t>0} \left\{t\right\}\times \Theta^{0}_{t}$,}\\
1&\text{locally uniformly in $\bigcup_{t>0} \left\{t\right\}\times \Theta^{1}_{t}$}. 
\end{cases}
\end{equation}
The sets $(\Theta^{0}_{t},t>0)$ and $(\Theta^{1}_{t},t>0)$ can be completely characterized in terms of the viscosity solution of an associated PDE, known as the mean-curvature equation, which we will later describe in detail. In particular, there exists a function $u: [0, \infty)\times \R^{\mathbf{d}}\rightarrow \R$ solving this PDE with an initial condition defined in terms of $p$, such that
\begin{equation}\label{e.ilevelsets}
\Theta^{0}_{t}:=\left\{u(t, \cdot)<0\right\}\quad\text{and}\quad \Theta^{1}_{t}:=\left\{u(t, \cdot)>0\right\}.
\end{equation}
This characterization of the sets is referred to as generalized mean-curvature flow (MCF). 

The first such result of this nature, for stochastic spatial models, can be found in the works of Katsoulakis and Souganidis \cite{KS1,KS2, KS3} and Barles and Souganidis \cite{BS}. In \cite{KS1}, the authors studied a spin system subject to Glauber-Kawasaki stirring dynamics, while in \cite{BS, KS2, KS3}, they considered stochastic Ising models with long range interactions and general spin-flip dynamics. The approach taken in these papers is based on comparing relevant annealed quantities of the system with special solutions (namely traveling/standing waves) of an associated reaction-diffusion equation. %

Recently, there have been a series of works on different spatial stochastic processes modelling biological systems, which also exhibit asymptotic phase separation evolving according to \emph{classical} MCF. For instance, this was examined in the work of Etheridge, Freeman, and Penington \cite{EFP2017} for a ternary branching Brownian motion (BBM) subject to a majority voting mechanism, and also for a version of the Spatial-Lambda-Fleming-Viot (SLFV)-process. This version of the SLFV was introduced to model the behaviour of hybrid zones arising in populations exhibiting selection against heterozygosity (see \cite{EFP2017} for details.)
Huang and Durrett \cite{DH2021} further analyzed several interacting particle systems modelling ecological phenomena. These included a sexual (bi-parental) reproduction model subject to fast stirring dynamics and two perturbations of the voter model: the Lotka-Volterra perturbation, introduced by Neuhauser and Pacala \cite{NeuhauserPacala} as an individual-based model for competition between species, and the nonlinear voter model of Molofsky et al. \cite{molo}, which was introduced to study co-existence of competing species.

 The proofs in \cite{EFP2017, DH2021} did not rely at all on PDE techniques, and instead proposed a more ``probabilistic approach'' for analyzing the large-scale dynamics of these models. The convergence results in these papers take place under certain regularity assumptions on $p$, and hold up until the first time when the MCF forms singularities. Our main motives in this paper are as follows: (1) to prove convergence results under minimal regularity assumptions on the function $p$, (2) to extend these convergence results to be \emph{global in time}, and (3) to demonstrate how one can prove generalized front propagation convergence results using purely probabilistic techniques. In particular, we prove that the biological models from \cite{EFP2017,DH2021} converge to generalized MCF, and thereby obtain information about the phase separation of these models globally in time, including in scenarios when the interfaces develop geometric singularities. We emphasize that global-in-time convergence means that we can determine the limiting behaviour of the model at all times, but that the convergence itself is locally uniformly in \emph{space-time}, as in \eqref{e.genericconv}.

Our work is based on a methodology introduced by Barles and Souganidis \cite{BS}, which provides four sufficient conditions on rescaled phase field models which guarantee convergence to a generalized front propagation. As in \eqref{e.ilevelsets}, the interface evolution is characterized via the level-set method and the theory of viscosity solutions. In \cite{BS}, the authors apply this methodology to several different rescaled PDE models and a class of stochastic Ising models. The framework in \cite{BS} encapsulates the priorly mentioned results of Katsoulakis and Souganidis \cite{KS1,KS2, KS3}. To our knowledge, all prior applications of the method of \cite{BS} rely on verifying these conditions using properties of PDEs; in particular, the existence of front-like solutions (i.e. solutions which are functions of $x\cdot e$ for some direction $e$) has been crucial.

In the same spirit as the ``probabilistic approach'' used in \cite{EFP2017, DH2021}, we verify the general conditions introduced by Barles and Souganidis \cite{BS} using probabilistic arguments. By this, we mean that  we circumvent the use any PDEs or front-like (traveling/standing wave) solutions at the microscopic (rescaled) level. Since the existence of front-like solutions is not always guaranteed, we are hopeful that this will allow for broader applications of the methodology of \cite{BS} in probabilistic problems. In this present work, we only consider examples in which the interface evolves according to MCF, but the framework we introduce allows for more general flows.

While a version of the result of \cite{BS} can be formulated directly in terms of $\E_{p}^{\eps}[w^{\eps}_{t}(x)]$, we take the perspective of \cite{EFP2017, DH2021}; we work with the \emph{moment dual} of $(w^{\eps}_{t}, t\geq 0)_{\eps>0}$, which is more accessible in terms of verifying the general conditions of \cite{BS}. For the models considered in this work, the dual process is given by a branching process which may or may not have coalescences, and the duality relation is made via a voting algorithm applied to the dual process. Even by working with the dual, for each individual model considered in \cite{EFP2017, DH2021}, verifying the general conditions of \cite{BS} requires considerable effort. 

This was our motivation in formulating our main result, Theorem \ref{t.realmain}, which states that, under several checkable conditions on the dual, $\EE^{\eps}_{p}[w^{\eps}_{t}]$ 
 converges to generalized MCF (i.e. \eqref{e.genericconv} holds). The hypotheses of Theorem \ref{t.realmain} are general enough to capture all of the models considered in \cite{EFP2017, DH2021}, thus allowing us to extend the convergence results of \cite{EFP2017, DH2021} for more general initial conditions defined according to $p$, and past the time of singularities. As discussed before, the limiting phase separation we prove is associated to a generalized notion of MCF defined via the level-set method. This flow is well-defined even when the MCF develops singularities, but it can form pathologies of its own. In particular, it is possible for the zero set defining the interface to have non-empty interior, effectively creating a macroscopic region where the limiting behaviour of our models cannot be determined. Whether or not interior is formed depends on the kind of singularity formed by the MCF, and the classification of singularities for which this does and does not occur is an active area in geometry.

The properties of the dual process and the associated voting algorithm serve as a ``probabilistic proxy'' for the  properties of PDEs that were priorly used in \cite{BS}. For one model, we see a direct connection between the approach of using moment duals and the approach using PDEs. As is shown in \cite[Theorem 2.2]{EFP2017}, in the case of ternary BBM subject to a majority voting procedure, the expected value of the outcome of the voting algorithm exactly solves a rescaled reaction-diffusion equation (notably, the rescaled Allen-Cahn equation). However, it is not in general true that a stochastic quantity (related to the dual) solves a PDE, and thus the arguments of \cite{EFP2017, DH2021}, and the present paper, do not \emph{rely} on any PDE techniques to prove convergence. In reference to the priorly mentioned use of front-like solutions in \cite{KS1, KS2, KS3, BS}, we highlight that the voting algorithm of the dual process can in fact be compared with the voting algorithm applied to a simplified one-dimensional version of the process (see Proposition \ref{prop:onedcouplefulltime}); this comparison between multidimensional and one-dimensional versions of the process is analogous to the one-dimensional nature of front-like solutions. 

\subsection{Further Literature Review and Comparison with Other Works}

As previously mentioned, the present work is most closely related to the papers of Etheridge, Freeman, and Penington \cite{EFP2017} and Huang and Durrett \cite{DH2021}; we extend their results, in the sense that we can handle more general choices of the initial function $p$, and we provide a global in time interpretation of the convergence. Indeed, letting $\mathrm{T}$ denote the first time where MCF forms singularities, for $t<\mathrm{T}$, the convergence in \eqref{e.genericconv} implies convergence to classical MCF (see Section \ref{s.mcf} for a discussion). On the other hand, we point out that they \cite{EFP2017, DH2021} prove quantitative convergence rates for times $t<\mathrm{T}$, whereas our convergence results do not have a rate. 

Very recently, Becker, Etheridge, and Letter \cite{BEL} have explored rescaled branching stable processes which converge to classical MCF, adapting the approach developed in \cite{EFP2017} to the setting of jump processes. Our methods could likely be extended to this setting, in the same spirit as the long-range interaction results of \cite{BS,KS2,KS3}, but we do not pursue this here. 

For a more PDE-based approach to the study of interacting particle systems, in addition to the priorly discussed works of Katsoulakis and Souganidis \cite{KS1, KS2, KS3}, we also mention some of the more recent developments in this direction. Kettani et al. \cite{Funakietal} consider Glauber and zero-range interacting particle systems, and they prove that classical MCF arises as the hydrodynamical limit in this setting. This was extended past the nearest neighbor case by Funaki et al. in \cite{Funakietal2}. All of these results are roughly based on comparing the interacting particle system to a special solution of an associated PDE and using properties of these solutions. More generally, there have been many works relating the study of interacting particle systems with MCF; see for example \cite{ANG, GBA, Funaki}, and references therein. 

Since the publication of \cite{EFP2017}, there has also been avid interest in further understanding the relationship between voting algorithms applied to branching processes and reaction-diffusion equations. We mention the very recent works of \cite{AHR, KRZ, koskela2024bernoulli}, which explore different models related to this question. 

\subsection{Outline of the Paper}
In Section \ref{s.prelims}, we establish the foundations needed in order to state our main results. This includes: a general framework for the spatial stochastic process, the statement of all of our hypotheses, and an introduction to generalized flows in the spirit of \cite{BS}. We state our main result, Theorem~\ref{t.realmain}, in Section \ref{s:abstract}. In Section \ref{s.BS}, we present the four conditions of \cite{BS} expressed in terms of the the voting algorithm and the dual process (which we refer to as \eqref{j.1}-\eqref{j.4}). It further contains the statement of Theorem~\ref{t.generaldual} which yields a general convergence result (for arbitrary flows) under the assumptions of \eqref{j.1}-\eqref{j.4} and the approximate dual property \eqref{eq:approxdualdef}.

In Section~\ref{s:vote}, we provide a precise framework for the voting algorithm and the verification of \eqref{j.2} and \eqref{j.4} under our main hypotheses. In Section \ref{s.J1} and Section \ref{s:genthm}, we complete the proof of Theorem \ref{t.realmain} by verifying the two most technical conditions, \eqref{j.1} and \eqref{j.4}, under the given hypotheses. Section \ref{s.bbmslfv} and Section \ref{s.ips} consist of verifying that the models considered in \cite{EFP2017} and \cite{DH2021} respectively satisfy the hypotheses of Theorem \ref{t.realmain}, and hence exhibit convergence to generalized MCF. Finally in the Appendix (Section \ref{s.app}), we collect some background from \cite{BS} related to generalized flows and generalized front propagation, which may be of interest to readers who are unfamiliar with that material.

\section{Preliminaries and Statement of the Main Results}\label{s.prelims}
\subsection{Spatial Stochastic Markov processes and Duals} \label{s.modelsduals}
The results of this paper are presented under a very general framework which encompasses many problems of interest. We devote this section to describing the full generality of this framework, and introduce \eqref{eq:approxdualdef}, one of the necessary hypotheses for our approach.

We consider a spatial stochastic process $(w_t, t \geq 0)$. In general, $w_t$ may be discrete, e.g. an interacting particle system taking values in the state space $\{0,1\}^{\Z^\mathbf{d}}$, or continuous, in which case we will generally assume $w_t \in \mathcal{B}(\R^{\mathbf{d}})$, the space of Borel measurable functions on $\R^{\mathbf{d}}$. For the present discussion, we will use the latter, but the main points carry through {\it mutatis mutandum} for discrete state spaces. In all cases, we will consider a deterministic function $p: \R^{d}\rightarrow [0,1]$ which \emph{generates} the initial condition. By this, we mean that either 
\begin{equation}\label{e.pdef}
w_{0}(\cdot)=p(\cdot)\quad\text{or}\quad \PP[w_{0}(\cdot)=1]=p(\cdot).
\end{equation}
We let $\PP_{p}$ denote the corresponding law of $(w_{t}, t\geq 0)$ and $\EE_{p}$ the expectation; in the special case when $p\equiv b$, for $b\in \R$ a constant, we simply write $\PP_{b}$ and $\EE_{b}$. We assume throughout that $w_t(\cdot) \in [0,1]$, and $w_{t}(x)$ represents information about the population at location $x$ and at time $t$. In general, the models we consider are driven by some combination of motion, dispersion, and interaction.

Our results rely on moment duality. Recall that in general, the dual process, which we denote by $(\hat{X}_{t}, t\geq 0)$, associated to $(w_{t}, t\geq 0)$ allows us to compute $\E_{p}[w_{t}(x)]$. We set $(\hat{\W}_{t}, t\geq 0)$ to be the associated dual historical process, which is defined as the backwards in time process which describes the spatial locations of the ancestors of an individual, as well as their historical trajectories. For each $x\in \R^\mathbf{d}$, we let $\hat{Q}_{x}$ denote a probability measure under which $(\hat{\W}_{t}, t\geq 0)$ has the law of the historical process started at $x$.

The duality is established via a voting algorithm $\V(\cdot; p)$ defined on the historical process $\hat{\W}_{t}$, which also depends on the initial function $p$. $\V(\cdot;p)$ can be thought of as an algorithm via which ancestral information is passed  through the ancestral graph of the historical dual process. The general form of the duality relationship, which can be established for many models of interest, is that for any $t>0$ and $x \in \R^{\mathbf{d}}$, 
\begin{equation} \label{eq.dualdef}
\E_{p}[w_t(x)] = \hat{Q}_x [\V(\hat{\W}_t; p) = 1].
\end{equation}

For the techniques imposed in this paper, we rely on the hypothesis that upon rescaling, the dual process can be approximated with high probability, by what we refer to as a \emph{pure branching} process, which we will denote by $(X_{t}, t\geq 0)$. For easy reference in future discussions, we give a precise definition which encodes the meaning of a pure branching process in this paper. 
\begin{definition}\label{d.defprocess}
The trajectory of $(X_{t}, t \geq 0$) is defined according to the following two mechanisms: 
\begin{itemize}
\item {\bf Spatial motion.} In between branch times, individuals evolve in space as independent copies of a Hunt process $(Y_t, t \geq 0)$  on $\R^{\mathbf{d}}$, with paths in the Skorokhod space $\mathbb{D}([0, \infty), \R^{\mathbf{d}})$. The law and expectation are denoted by $P^{Y}_x$ and $E^{Y}_x$ when $Y_0 = x$, and the map $x \mapsto P^{Y}_x$ is assumed to be measurable.
\item {\bf Branching.} With branch rate $\gamma > 0$, individuals branch into $N_0 \in \N$ individuals, independently of one another. After giving birth, an individual is removed from the population. At the branch time, if the location of the parent is $y$, then the offspring displacements have joint law $\mu_y$; thus, for a branching event with parent location $y$, the offspring location vector is $(y+\xi_1,\dots, y+\xi_{N_0})$, where $(\xi_1,\dots,\xi_{N_0})$ is sampled from $\mu_y$. The map $y \mapsto \mu_y$ is assumed to be measurable.
\end{itemize}
\end{definition}

If one ignores the spatial information, the above is simply a continuous-time branching process with $N_0$-ary branching at rate $\gamma$. Thus, it has a naturally defined (time-labelled) tree structure. This is discussed in detail in Section~\ref{s:vote}, where we describe voting algorithms acting on trees.

Equipped with this definition, we now describe the rescaled processes, and the sense in which the previously mentioned approximation holds. We will consider a family of stochastic spatial models $(w^\eps_t, t \geq 0)_{\eps>0}$ rescaled and tuned according to a parameter $\eps>0$. The dual processes are scaled accordingly, with all prior quantities denoted with an $\eps$-dependence. We will always consider a scaling regime in which the branch rate of the dual is $\gamma_\eps := \gamma \eps^{-2}$ for some fixed $\gamma > 0$. We highlight that the voting algorithm in general does not change with any rescaling of the model. 
 
The primary assumption is that, for each $\eps>0$, there exists a pure branching process $(X^{\eps}_{t}, t\geq 0)$ (in the sense of Definition \ref{d.defprocess}), and corresponding law $Q^{\eps}_{x}$ started at $x$, such that for any compact set $K\subseteq (0,\infty)\times \R^{\mathbf{d}}$,
\begin{equation}\tag{AD}\label{eq:approxdualdef}
\lim_{\epsilon \to 0} \sup_{(t,x) \in K}\left|\E^{\eps}_{p}[w^{\eps}_t(x)] -Q^{\eps}_x [\V(\W^{\eps}_t; p) = 1]\right|=0. 
\end{equation} 
If \eqref{eq:approxdualdef} holds, we will refer to $(\W^{\eps}_{t}, t\geq 0)$ as the \emph{approximate dual} historical process. Observe that \eqref{eq:approxdualdef} does not inherently \emph{require} the existence of a true moment dual. Nevertheless, in most cases, we establish \eqref{eq:approxdualdef} for stochastic spatial models which are equipped with a true moment dual $(\hat{X}^{\eps}_{t}, t\geq 0)$ and instead prove that \begin{equation} \label{eq:dualapproxdual}
\lim_{\epsilon \to 0} \sup_{(t,x) \in K} \left|Q^\eps_x [\V(\hat{\W}^{\eps}_t; p) = 1]-Q^\eps_x [\V(\W^{\eps}_t; p) = 1]\right|= 0, 
\end{equation}
from which \eqref{eq:approxdualdef} is immediate.

As noted after \eqref{eq.dualdef}, we work in regimes in which the dual process $(\hat{X}_t, t\geq 0)$ can be approximated by a pure branching process $(X_{t}, t\geq 0)$ with high probability. The definition of $(X_{t}, t\geq 0)$ is model specific, but typically involves removing collisions/interactions/coalescences from the dynamics of $(\hat{X}_t,t \geq 0)$. Proving \eqref{eq:approxdualdef} is thus a matter of showing that this perturbation has an asymptotically negligible effect.

In addition to \eqref{eq:approxdualdef}, we make two assumptions on the approximate dual process itself. To state these assumptions, we introduce some notation. Throughout the rest of the paper, we reserve the notation $(W^{\mathbf{d}}_{t}, t\geq 0)$ and $(W^{\mathbf{1}}_{t}, t\geq 0)$ to denote respectively $\mathbf{d}$- and  $1$-dimensional standard Brownian motion, with $P^{W^{\mathbf{d}}}_x$, $P^{W^{1}}_x$, $E^{W^{\mathbf{d}}}_x$, and $E^{W^{1}}_x$ denoting their laws and expectations respectively when $W^{\mathbf{d}}_0$ or $W^1_0$ is $x$. We will also use this notation to denote processes whose law, under some probability measure, e.g. $Q^\eps_x$, is a Brownian motion.

Our assumptions on the approximate dual allow us to compare this process to an $N_0$-ary branching Brownian motion. First, as noted above, we will always consider a scaling in which the branch rate of $X^\eps$ is given by $\gamma_\eps = \gamma \eps^{-2}$, with $\gamma > 0$. Let us denote by $P^{Y,\eps}_x$ and $\mu^\eps_y$ the law of the spatial motion and offspring displacement distribution associated to $X^\eps$. We assume that there exist constants $\bar{C}\in [1, \infty)$, $k >1$, and $\bar{c}, \eta, \eps_0 \in (0,1]$ such that the following hold: 
\begin{list}{ (\theascan)}
{
\usecounter{ascan}
\setlength{\topsep}{1.5ex plus 0.2ex minus 0.2ex}
\setlength{\labelwidth}{1.2cm}
\setlength{\leftmargin}{1.5cm}
\setlength{\labelsep}{0.3cm}
\setlength{\rightmargin}{0.5cm}
\setlength{\parsep}{0.5ex plus 0.2ex minus 0.1ex}
\setlength{\itemsep}{0ex plus 0.2ex}
}
\item \label{a.1} \textbf{Lineages converge to Brownian motion.} For every $x$, $(Y_t, t\geq0)$ started at $x$ can be coupled with a Brownian motion $(\bmd_t, t\geq 0)$ started from $x$, such that for all $\eps< \eps_0$,
\begin{equation*}
\sup_x \sup_{s \in (0, \eps^2 |\log \eps|^2]} P^{Y, \eps}_x [ |Y_s - \bmd_s| > \eps^{k+2}]\leq \bar{C}e^{-\bar{c} \eps^\eta}.
\end{equation*} 
\item \label{a.2} \textbf{Offspring dispersion concentration.} For $\eps<\eps_0$, we have 
\begin{equation*}
 \sup_y \mu^\eps_y[ \xi: \max_{1\leq i\leq N_0} |\xi_i| > \eps^{k+2} ] \leq \bar{C}e^{-\bar{c}\eps^\eta}.
\end{equation*}
\end{list}

Because $Y$ and $\xi$ are the building blocks of the spatial part of the dual process, random variables with the same distributions as $Y$ and $\xi$ will arise frequently under $Q^\eps_x$. Hence, in a slight abuse of notation, we will write the above estimates as
\begin{equation*}
\sup_x \sup_{s \in (0, \eps^2 |\log \eps|^2]} Q^{\eps}_x [ |Y_s - \bmd_s| > \eps^{k+2}]\leq \bar{C}e^{-\bar{c} \eps^\eta}.
\end{equation*}
and 
\begin{equation*}
 \sup_y Q^{\eps}_y[  \xi: \max_{1\leq i\leq N_0} |\xi_i| > \eps^{k+2} ] \leq \bar{C}e^{-\bar{c}\eps^\eta}.
\end{equation*}

\subsection{The $g$-function and the Voting Algorithm}\label{s.gint}
We next require hypotheses on the voting algorithm $\mathbb{V}(\W_{t}; p)$ which appears in \eqref{eq:approxdualdef}.  We reserve a precise discussion of the voting algorithm for Section \ref{s:vote}, however we begin with a heuristic description of the voting which allows us to precisely state the main hypotheses for our results.

The voting function relies on the initial distribution $p$, and the historical approximate dual process $(\W_t, t\geq 0)$, which is a pure branching process. As priorly mentioned, the approximate dual process is naturally associated to an $N_{0}$-ary tree. Based on input votes at the leaves, which are determined by $p$, the votes of the parents are determined by the votes of their children. By propagating backwards in time through the tree, this eventually assigns a vote for the root. As was introduced in \cite{DH2021}, the voting in any generation can be described according to a function $g:[0,1]^{N_0} \to[0,1]$, where the $N_{0}$ inputs correspond to information about the votes of $N_{0}$ children at each generation, and the output $g$ yields information about the vote of the parent. In particular, the voting algorithm propagates votes through the tree via a mapping $\Theta:\{0,1\}^{N_0} \to [0,1]$, where the inputs are the votes of a family of $N_0$ siblings and the output is the probability that their parent has vote $1$. The $g$-function, which is defined precisely in Section~\ref{s:gfunction} (see \eqref{e.gdef}), encodes the expected behaviour of $\Theta$ when evaluated on independent Bernoulli inputs. The assumptions we now state on $g$ are essentially the same as those in \cite{DH2021}. 

By definition, $g$ is a multivariate function. We will abuse notation and write a univariate function $g(p)$, understood to mean $g(p,\dots,p)$ for $p \in [0,1]$. An important assumption is the monotonicity property, which relates the multivariate and univariate $g$-functions:  
\begin{list}{(\thegoscan)}
{
\usecounter{goscan}
\setlength{\topsep}{1.5ex plus 0.2ex minus 0.2ex}
\setlength{\labelwidth}{1.2cm}
\setlength{\leftmargin}{1.5cm}
\setlength{\labelsep}{0.3cm}
\setlength{\rightmargin}{0.5cm}
\setlength{\parsep}{0.5ex plus 0.2ex minus 0.1ex}
\setlength{\itemsep}{0ex plus 0.2ex}
}
\item \label{g.0} For any $p_{i}\leq \bar{p}_{i}$, for $i\in [N_{0}]$, 
\begin{equation*}
    g(p_1,\dots p_{i}, \dots ,p_{N_0}) \leq g(p_1,\dots \bar{p}_{i}, \dots ,p_{N_0}).
\end{equation*}
\end{list}
Under the assumption that \eqref{g.0} holds, it easily follows that 
\begin{equation*}
 g(\min p_i) \leq g(p_1,\dots p_{i}, \dots ,p_{N_0}) \leq g(\max p_i), 
\end{equation*}
and consequently, we are able to present the remaining assumptions in terms of the univariate $g$-function.

For the remaining assumptions, we assume that $g \in C^2((0,1)) \cap C([0,1])$, with $g(0)=0$ and $g(1)=1$. We will extend $g$ to all of $\R$ by continuously extending it to be constant outside of $[0,1]$. We assume that $g$ satisfies the following:
\begin{list}{ (\thegscan)}
{
\usecounter{gscan}
\setlength{\topsep}{1.5ex plus 0.2ex minus 0.2ex}
\setlength{\labelwidth}{1.2cm}
\setlength{\leftmargin}{1.5cm}
\setlength{\labelsep}{0.3cm}
\setlength{\rightmargin}{0.5cm}
\setlength{\parsep}{0.5ex plus 0.2ex minus 0.1ex}
\setlength{\itemsep}{0ex plus 0.2ex}
}

\item \label{g.1}$g$ has fixed points $a,\mu,b \in [0,1]$ satisfying the following: $0 \leq a < \avg < b \leq 1$, $\avg$ is unstable, $a$ and $b$ are stable, and $b- \avg = \avg -a$. Moreover, these are the only fixed points of $g$ in $[a,b]$. If $a \neq 0$ (resp. $b \neq 1$), $g$ may have other fixed points in $[0,a)$ (resp. $(b,1]$).

\item \label{g.2} For any $\delta\in (0,\avg  - a$),  $g(\mu + \delta) - \mu = -g(\mu-\delta) - \mu$, i.e. $g$ is anti-symmetric about the fixed point $g(\mu) = \mu$.
\item \label{g.3} $g'(\cdot)>0$, $g'(\avg) > 1$ and $g'(a) = g'(b) < 1$.
\end{list}

\begin{remark}
    In \cite{DH2021}, there is an additional convexity condition on $g$, namely that $g''(r) > 0$ if $r \in (a,\avg)$ and $g''(r) <0$ if $r \in (\avg, b)$. Using this convexity, one can deduce the claim in \eqref{g.1} that the only fixed points of $g$ in $[a,b]$ are $a,b$, and $\mu.$ Our framework is slightly more general; we do not require the convexity condition but instead explicitly impose in  \eqref{g.1} that there are no additional fixed points of $g$. 
\end{remark}

\begin{remark}
    As consequence of \eqref{g.1}-\eqref{g.3}, we have 
 \begin{list}{ (\thegfscan)}
{
\usecounter{gfscan}
\setlength{\topsep}{1.5ex plus 0.2ex minus 0.2ex}
\setlength{\labelwidth}{1.2cm}
\setlength{\leftmargin}{1.5cm}
\setlength{\labelsep}{0.3cm}
\setlength{\rightmargin}{0.5cm}
\setlength{\parsep}{0.5ex plus 0.2ex minus 0.1ex}
\setlength{\itemsep}{0ex plus 0.2ex}
}
\item\label{g.5} There exists $c_0 \in (0,1-g'(a))$ and $\delta_*\in (0,1)$ so that
\begin{equation*}
\max_{r\in [b-\delta_{*}, b+\delta_{*}]}|g'(r)|<1-c_{0}\quad\text{and}\quad \max_{r\in [a-\delta_{*}, a+\delta_{*}]} |g'(r)|<1-c_{0}.
\end{equation*}
\end{list}
Indeed, we first remark that \eqref{g.3} implies the existence of $c_0\in (0,1-g'(a))$. Since $g\in C^{2}((0,1))$, and $g$ is extended to be continuous and constant outside of $(0,1)$, this (and the symmetry guaranteed by \eqref{g.2})  implies the existence of $\delta_{*}=\delta_{*}(c_{0}, \norm{g}_{C^{2}((0,1))})\in (0,1)$ as above. We include \eqref{g.5} as its own hypothesis, because we need it readily available in several proofs.
       \end{remark}

       \subsection{Motion By MCF and the Level-Set Method}\label{s.mcf}
      We give an overview of mean curvature flow (MCF) and generalized MCF. Readers who are experienced with these notions can skip this subsection. MCF is one of the most well-studied geometric flows, and it naturally arises in mathematical models for phase separation, including models of physical and biological systems. We begin with \emph{classical} MCF. We say that a collection of $(\mathbf{d}-1)$-dimensional hypersurfaces $\left\{\Ga_{t}, t\geq 0\right\}\subseteq \R^{\mathbf{d}}$ evolve according to MCF if for each $x\in \Ga_{t}$, the velocity of $\Ga_{t}$ at the point $x$ is given by $V(x)=-2^{-1}H n(x)$ where $H$ denotes the mean curvature of $\Ga_{t}$ at $x$, and $n(x)$ denotes the unit outward normal to $\Ga_{t}$. The additional scaling factor of $2^{-1}$ changes the speed of the flow such that we can work with standard Brownian motions as opposed to Brownian motions run at speed $2$. Our notion of ``outwards'' is based on the premise that as a $(\mathbf{d}-1)$-dimensional hypersurface, we associate an open set $\Theta_{t}\subseteq \RR^{\mathbf{d}}$ such that $\Ga_{t}=\partial \Theta_{t}$, and $n$ points towards $\Theta_{t}^{c}$. 
      
The formation of singularities for surfaces evolving by MCF (i.e. when the velocity vector is ill-defined) is a ubiquitous phenomenon and a topic of major ongoing interest. Surfaces evolving by MCF may develop singularities, even when started from smooth initial data. Let $\mathrm{T}$ denote the first time when a singularity forms. Without a weaker notion of MCF, results on models exhibiting phase separation evolving by MCF can only hold up to time $\mathrm{T}$. 

There have been two main approaches to extending the notion of MCF past the formation of singularities. One such approach is a weak geometric formulation introduced by Brakke \cite{Br}. The other approach is a so-called PDE approach, and this is what we choose to work with in the present paper. 

The framework which we consider is a notion of generalized MCF via the level-set method.  The level-set method was pioneered by Osher and Sethian \cite{OS}, and for the case of MCF, was further studied by Evans and Spruck \cite{ES1}, and independently by Chen, Giga, and Goto \cite{CGG}. Broadly speaking, the idea is to connect the evolution of interfaces with the level sets of a given function $\psi: [0, \infty)\times \R^{\mathbf{d}}\rightarrow \RR$, where the initial interface $(\mathbf{d}-1)$-dimensional interface $\Ga_{0}\subseteq \R^{\mathbf{d}}$ is given by $\Ga_{0}=\left\{\psi(0, \cdot)=0\right\}$.

In order to identify $\psi$ for a given velocity vector field $V$, as in the method of characteristics for first-order PDEs, let $x(t)$ denote the curve, started from $x_{0}\in\Ga_{0}$, such that $\psi(t,x(t))=0$. Upon differentiating this relation with respect to $t$, and by using the velocity field $V$, we have 
\begin{equation*}
\partial_{t}\psi+\dot{x}\cdot D\psi=\partial_{t}\psi+V\cdot D\psi=0. 
\end{equation*}
This is equivalent to the PDE
\begin{equation}\label{e.normvel}
\partial_{t}\psi + V_{n}|D\psi|=0, 
\end{equation}
where the function $V_{n}:=V\cdot n$ is the normal velocity of the surface, and in the case when the surface is the level set of some function $\psi(t, \cdot)$, we may identify $n= \frac{D\psi}{|D\psi|}$, assuming that $|D\psi|\neq 0$ on the level-set $\left\{\psi(t, \cdot)=0\right\}$.

Using the normal velocity as in the setting of classical MCF, \eqref{e.normvel} becomes the equation
\begin{equation*}
\partial_{t}\psi-\frac{1}{2}\text{div}\left(\frac{D\psi}{|D\psi|}\right)|D\psi|=\partial_{t}\psi-\frac{1}{2}\tr\left[\left(\id-\frac{D\psi \otimes D\psi}{|D\psi|^{2}}\right)D^{2}\psi\right]=0. 
\end{equation*}
The above equation is a degenerate parabolic PDE, for which the theory of \emph{viscosity solutions} provides a well-posed theory of weak solutions (see \cite{users}). Of note, for continuous initial data, a unique viscosity solution exists globally in time (the theory can also be extended to discontinuous initial data, see for example \cite{BS} for a discussion).  

We now describe the level-set method precisely. Let $\Theta_{0}\subseteq \R^{\mathbf{d}}$ be a non-empty open set, and let $d(0,\cdot): \R^{d}\rightarrow \R$ denote the signed distance function to $\Ga_{0}:=\partial\Theta_{0}$, taken to be negative in the set $\Theta_{0}$ and positive on the set $\overline{\Theta_{0}}^{c}$. This implies that we can also characterize 
\begin{equation}\label{e.ic}
\Ga_{0}=\left\{d(0, \cdot)=0\right\}=\partial\left\{d(0, \cdot)<0\right\}.
\end{equation}
We consider the unique viscosity solution $u: [0, \infty)\times \R^{\mathbf{d}}\rightarrow \R$ solving
\begin{equation} \label{eq:levelset1}
\begin{cases}  \partial_t u  - \frac{1}{2} \tr \left[\left( \id - \frac{Du\otimes Du}{|Du|^2} \right) D^{2}u\right]=0 &\text{in $(0,\infty) \times \R^\mathbf{d}$},\\
u(0,x) = d(0,x), &\text{in $\R^\mathbf{d}$}.
\end{cases}
\end{equation}
Since the unique viscosity solution of \eqref{eq:levelset1} exists globally in time, we may now \emph{define}
\begin{equation}\label{e.gammatdef}
\Ga_{t}:=\left\{x: u(t,x)=0\right\}
\end{equation}
for all $t \geq 0$. This provides a generalized notion of what it means for sets $\left\{\Ga_t, t \geq 0\right\}$ to evolve by MCF, and throughout the paper, we will say that $\left\{\Gamma_t, t \geq 0\right\}$ evolves by \emph{generalized MCF} if it is defined according to \eqref{eq:levelset1}-\eqref{e.gammatdef}.

It follows from the work of Evans and Spruck \cite{ES1, ES2} that if $\left\{\Ga_t', 0\leq t\leq T^{*}\right\}$ is a family of surfaces evolving according to \emph{classical} MCF up until some time $T^{*}>0$, then $\Ga_{t} = \Ga_t'$ for $t \in [0,T^{*}]$. The choice of initial condition is simply for convenience; as is shown in \cite{BSS}, the sets $\left\{\Ga_{t}, t\geq 0\right\}$ are uniquely defined, even for different initial conditions which share the same $0$-level set. Finally, as we will see, this level-set formulation has yet another equivalent formulation, known as generalized flows (see Definition \ref{d.genflow}), which was introduced in \cite{BS}, and this equivalence is crucial for our approach. 

While generalized MCF offers the advantage that one can have a global in time interpretation for MCF of an interface (i.e. level-set), the method does not guarantee that the interface is necessarily a $(\mathbf{d}-1)$-dimensional hypersurface; indeed, there are pathological scenarios where, past the time that singularities form in the classical flow, the interface $\Ga_t$ fails to be the boundary of an open set, and instead develops interior (or ``fattens''). In the analysis of phase separation, with this notion of generalized MCF, one can discuss phase separation on the \emph{complement} of $\Ga_t$; however, if $\Ga_t$ develops interior, there is a fat set where the limiting behaviour of the model is unknown. We emphasize that formation of singularities does not imply the fattening of $\Ga_t$. For certain types of singularities, for example ``neck'' singularities, $\Ga_t$ will not develop interior \cite{CHH,CHHW}. In such cases, using generalized rather than classical MCF offers a great improvement in the analysis of phase separation.

\subsection{Statement of the Main Result} \label{s:abstract}

We have now introduced all relevant hypotheses to state our main result. We fix two constants $a<b$, with $a,b\in [0,1]$, and set $\mu:=2^{-1}(b-a)$.

\begin{definition}\label{d.defint}
Let $p: \R^{\mathbf{d}}\rightarrow [a,b]$. We say that $p$ defines the interface $\Ga_{0}\subset \R^{\mathbf{d}}$ if $\Ga_{0}=\partial \Theta_{0}$ where $\Theta_{0}, \overline{\Theta_{0}}^{c}\subseteq \R^{\mathbf{d}}$ are  non-empty open sets, with 
\begin{equation*}
\Theta_{0}=\left\{p(\cdot)<\mu\right\}, \quad\text{and}\quad \overline{\Theta_{0}}^{c}=\left\{p(\cdot)>\mu\right\}. 
\end{equation*}
\end{definition}
Observe that by definition, $\Ga_{0}$ is the boundary of an open set, and thus contains certain minimal topological properties (i.e. it is closed and nowhere dense). 
\begin{definition}\label{d.lumcf}
Let $\Ga_{0}$ be as above in Definition \ref{d.defint}. We say that a sequence of functions $\left\{u^{\eps}\right\}_{\eps>0}$ defined on $[0, \infty)\times \R^{\mathbf{d}}$ converges locally uniformly to $(a,b)$-generalized mean-curvature flow (MCF) started from $\Ga_{0}$ if $u^{\eps}(0,\cdot): \R^{\mathbf{d}}\rightarrow [a,b]$ defines $\Ga_{0}$ in the sense of Definition \ref{d.defint} above, and 
\begin{equation*}
 \lim_{\eps\to 0} u^{\eps}(t,x)=\begin{cases} b&\text{locally uniformly in $\bigcup_{t\in (0, \infty)} \left\{t\right\}\times \left\{u(t,\cdot)>0\right\}$},\\
 a&\text{locally uniformly in $\bigcup_{t\in (0,\infty)} \left\{t\right\}\times \left\{u(t, \cdot)<0\right\}$},
 \end{cases}
 \end{equation*}
where $u$ is the unique viscosity solution of \eqref{eq:levelset1}. 
\end{definition}
In particular, we highlight that ``away from the interface,'' $\left\{u^{\eps}\right\}_{\eps>0}$ converges locally uniformly in space-time to the values of $a$ or $b$. We also remark that the ``$(a,b)$'' from ``convergence to $(a,b)$-generalized mean-curvature flow'' relates only to the limiting values of $u^{\eps}(t,x)$ off of the interface, but that the sub- and super-level sets of $u$ where these limiting values are attained do not depend on $a$ and $b$.

We now state our main result: 
\begin{theorem}\label{t.realmain}
Suppose that $(w_t^\eps, t\geq 0)_{\eps > 0}$ is a family of stochastic spatial models such that the following holds:
\begin{itemize}
    \item There exists a family of approximate dual processes  $(\W^{\eps}_t, t\geq 0)_{\eps >0}$ such that \eqref{eq:approxdualdef} holds. 
    \item $(\W^{\eps}_t, t\geq0)_{\eps>0}$ are pure branching processes as in Definition~\ref{d.defprocess}, with branch rate $\gamma_\eps = \gamma \eps^{-2}$, and satisfy \eqref{a.1} and \eqref{a.2}.  \item The voting algorithm $\V$ is associated to a $g$-function satisfying \eqref{g.0}-\eqref{g.5}. 
\end{itemize} 
Then for any $p$ defining $\Ga_{0}\subseteq \R^{\mathbf{d}}$, as $\eps\to 0$, $\E^{\eps}_{p} [w^{\eps}_t]$ converges locally uniformly to $(a,b)$-generalized MCF started from $\Ga_{0}$.
\end{theorem}

\begin{remark} 
We highlight that the only assumption on $p$ is that it defines the interface $\Ga_{0}$ in the sense of Definition \ref{d.defint}.  In \cite{EFP2017, DH2021}, the authors assume that $\Ga_{0}=\left\{ p(\cdot)=\sfrac{1}{2}\right\}$, and they require the following:
\begin{list}{ (\thecscan)}
{
\usecounter{cscan}
\setlength{\topsep}{1.5ex plus 0.2ex minus 0.2ex}
\setlength{\labelwidth}{1.2cm}
\setlength{\leftmargin}{1.5cm}
\setlength{\labelsep}{0.3cm}
\setlength{\rightmargin}{0.5cm}
\setlength{\parsep}{0.5ex plus 0.2ex minus 0.1ex}
\setlength{\itemsep}{0ex plus 0.2ex}
}
\item \label{C1}$\Ga_{0}$ is $C^{\al}$ for some $\al>3$. 
\item\label{C2} $\Theta_{0}=\left\{p<\frac{1}{2}\right\}$ and $\overline{\Theta_{0}}^{c}=\left\{p>\frac{1}{2}\right\}$.
\item \label{C3} There exists $r,\eta\in (0,1)$ such that for all $x\in \R^{\mathbf{d}}$, $\left|p(x)-\sfrac{1}{2}\right|>\eta\dist(x, \Ga_{0} \wedge r)$. 
\end{list}
The assumption \eqref{C1} guarantees that classical MCF started from $\Gamma_0$ exists (and remains regular) until some $T^* >0$, whereas \eqref{C3} imposes that $p$ has some minimum ``slope'' near the initial interface. Both of these are used in the proofs in \cite{EFP2017, DH2021}. We are able to relax the assumptions \eqref{C1} and \eqref{C3} due to the fact that we work with generalized front propagation, and we note that \eqref{C2}, which agrees with Definition \ref{d.defint}, is in a sense optimal; any relaxation of this condition may cause the initial interface to change. The assumptions \eqref{C1}-\eqref{C3} originate from the work of Soner \cite{S2} in the analysis of a rescaled Ginzburg-Landau equation, and have also appear in other related works which yield convergence to generalized MCF (see \cite{KS1} for a discussion).

As we have discussed, the generalized MCF $\Gamma_t$ may develop interior. Theorem~\ref{t.realmain} does not address what happens to $\EE^\eps_p[w^{\eps}_{t}]$ in these regions. Still, there are types of singularities for which $\Gamma_t$ does not fatten, and in these cases Theorem~\ref{t.realmain} gives a complete convergence theorem. 

\end{remark}

\subsection{Strategy of the Proof: the Abstract Approach of Barles and Souganidis using the Approximate Dual}
\label{s.BS}

The approach of our proof relies on using an abstract approach to front propagation, introduced by Barles and Souganidis \cite{BS}, which we now describe. We begin with some basic definitions. 

\begin{definition} \label{def:hrlimit}
Given a collection of real-valued functions $\left\{u^{\eps}\right\}_{\eps>0}$, we define the ``half-relaxed'' limits
\begin{equation*}
\limsups_{\eps\rightarrow 0}u^{\eps}(t,x):=\sup\left\{\limsup_{\eps\rightarrow 0} u^{\eps}({t_{\eps}}, x_{\eps}): (t_{\eps}, x_{\eps})\rightarrow (t,x)\right\}
\end{equation*}

\begin{equation*}
\liminfs_{\eps\rightarrow 0}u^{\eps}(t,x):=\inf\left\{\liminf_{\eps\rightarrow 0} u^{\eps}({t_{\eps}}, x_{\eps}): (t_{\eps}, x_{\eps})\rightarrow (t,x)\right\}
\end{equation*}

\end{definition}

\begin{definition}
For $U\subseteq \R^{\mathbf{d}}$ and $f: U\to\R$, we define the upper (respectively lower) semicontinuous envelope by
\begin{equation*}
f^{*}(y):=\limsup_{z\to y}f(z)\quad\text{and}\quad f_{*}(y)=\liminf_{z\to y}f(z).
\end{equation*}
\end{definition}

The abstract approach of Barles and Souganidis \cite{BS} holds true for more general limiting flows which are characterized by a locally bounded function $F : \cS^{\mathbf{d} \times \mathbf{d}} \times (\R^\mathbf{d} \backslash \{0\})\rightarrow \R$, where $\cS^{\mathbf{d} \times \mathbf{d}}$ is the space of symmetric $\mathbf{d} \times \mathbf{d}$ matrices; the flow $F$ must satisfy certain properties (see Section \ref{s.app} and \cite{BS} for precise conditions), but in this discussion, we fix the function $F$ to be MCF, i.e.
\begin{equation}\label{e.Fdef}
F(M,p):=-\frac{1}{2}\tr\left[\left(\id-\frac{p\otimes p}{|p|^{2}}\right)M\right].
\end{equation}
We note that while $F(M, p)$ is not defined when $p=0$, we can consider the upper semicontinuous envelope of $F$, given by 
\begin{equation*}
F^{*}(M,p):=\begin{cases}-\frac{1}{2}\tr\left[\left(\id-\frac{p\otimes p}{|p|^{2}}\right)M\right]&\text{if $|p|\neq 0$},\\
-\frac{1}{2}\left[\tr(M)+\la_{\max}(M)\right]&\text{if $p=0$},
\end{cases}
\end{equation*}
where $\la_{\max}(M)$ is the largest eigenvalue of $M$. Similarly, we can consider the lower semicontinuous envelope
\begin{equation}\label{e.flsc}
F_{*}(M,p):=\begin{cases}-\frac{1}{2}\tr\left[\left(\id-\frac{p\otimes p}{|p|^{2}}\right)M\right]&\text{if $|p|\neq 0$},\\
-\frac{1}{2}\left[\tr(M)+\la_{\min}(M)\right]&\text{if $p=0$,}
\end{cases}
\end{equation}
where $\la_{\min}(M)$ is the smallest eigenvalue of $M$.

In \cite{BS}, Barles and Souganidis introduce four conditions under which they prove asymptotic phase separation for rescaled solutions of reaction-diffusion equations. As priorly mentioned, we will state these conditions on the following function which is defined according to the approximate dual process and voting algorithm.
For $p: \R^{\mathbf{d}}\rightarrow [a,b]$ as above, we define
\begin{equation}\label{e.uepdef}
u^{\ep}(t,x; p):=Q^{\ep}_{x}[\V(\W^{\eps}_{t}; p)=1],
\end{equation}
and recall from \eqref{eq:approxdualdef} that $u^\eps(t,x; p) = \E^{\eps}_{p}[w_t^\eps(x)]+o(1)$. While for certain models, $x$ may necessarily belong to the rescaled integer lattice, we may define the function $u^{\eps}(t, \cdot; p)$ on all of $\R^{\mathbf{d}}$ by piecewise constant extension. Thus, without loss of generality we work on spatial domain $\R^{\mathbf{d}}$.

We use the symbol $\heartsuit$ to denote parameters which are intrinsic to the stochastic spatial model and dual, such as dimension $\mathbf{d}$, number of children in branching events, branch rate, etc. These dependencies are made explicit later on. In addition, we recall at this time that for $k\in \mathbb{N}$, the $C^k$-norm on a domain $E\subseteq \R^{\mathbf{d}}$ is defined by
\begin{equation*}
\norm{f}_{C^{k}(E)}=\norm{f}_{L^\infty(E)}+\sum_{j=1}^{k} \norm{D^{j}f}_{L^\infty(E)}.
\end{equation*}

We now state the four following conditions which appear in \cite{BS}.
\begin{list}{ (\thejscan)}
{
\usecounter{jscan}
\setlength{\topsep}{1.5ex plus 0.2ex minus 0.2ex}
\setlength{\labelwidth}{1.2cm}
\setlength{\leftmargin}{1.5cm}
\setlength{\labelsep}{0.3cm}
\setlength{\rightmargin}{0.5cm}
\setlength{\parsep}{0.5ex plus 0.2ex minus 0.1ex}
\setlength{\itemsep}{0ex plus 0.2ex}
} 
\item \label{j.1} \textbf{The Semigroup Property:} For all $\eps>0$ and all $h>0$ and $(t,x)\in (0, \infty)\times\R^{\mathbf{d}},$
\begin{equation*}
u^{\ve}(t+h,x; p)=u^{\ve}(h, x; u^{\ve}(t,\cdot; p)).
\end{equation*} 

\item \label{j.2} \textbf{Monotonicity:} If $p(\cdot) \leq \hat{p}(\cdot)$, then for all $\eps>0$ and all $(t,x)\in (0, \infty)\times\R^{\mathbf{d}}$,
\begin{equation*}
u^{\ep}(t,x; p)\leq u^{\ve}(t,x; \hat{p}).
\end{equation*}

\item \label{j.3} \textbf{Existence of Equilibria:} There exists $a,b\in \R$ with $a<b$ such that for all $\eps>0$ and all  $(t,x)\in (0, \infty)\times\R^{\mathbf{d}}$, 
   \begin{equation*}
       u^{\eps}(t,x; a)\equiv a\quad\text{and}\quad u^{\eps}(t,x;b)\equiv b. 
   \end{equation*}
\item \label{j.4} \textbf{Flow Consistency:} 
\begin{enumerate}[(i)]
 \item For every $x_{0}\in \R^\mathbf{d}$, $r\in (0,1)$, and every smooth function $\phi : \R^\mathbf{d} \to \R$ satisfying $\{\phi\geq 0 \} \subseteq B(x_0,r)$ with $|D\phi(\cdot)| \neq 0$ on $\{\phi = 0\}$, there exists $\bar{\delta}\in (0,1)$ and $h_0=h_{0}(\norm{\phi}_{C^{4}(\overline{B(x_{0},r)})}, \heartsuit)\in (0,1)$ such that for every $\al\in (0,1)$, the following holds: for 
\begin{equation*}
L^{+}_{h,\al}:=\left\{x: \phi(x) - h\left(F^{*}(D^2\phi(x), D\phi(x)) + \alpha \right) >0\right\}\subseteq \R^{\mathbf{d}},
\end{equation*}
we have that for all $\delta \in (0, \bar{\delta}]$, for all $h\in (0, h_{0}]$ and for all $x\in L^{+}_{h, \al}\cap B(x_{0}, r)$, 
\begin{equation}
\liminfs_{\eps \to 0} \,\,u^{\eps}(t,x; p^{-}(\phi, \delta) )= b,
\end{equation}
where $p^-(\phi,\delta) :=  (b - \delta) \indc_{\{\phi \geq 0\}} + a \indc_{\{\phi< 0\}}$.

\item For every $x_0 \in \R^\mathbf{d}$, $r\in (0,1)$, and every smooth test function $\phi : \R^\mathbf{d}\to \R$ satisfying $\{\phi \leq 0 \} \subseteq B(x_0,r)$ with $|D\phi(\cdot)| \neq 0$ on $\{\phi= 0\}$, there exists $\bar{\delta}\in (0,1)$ and $h_0=h_{0}( \norm{\phi}_{C^{4}(\overline{B(x_{0},r))}}, \heartsuit)\in (0,1)$ such that for every $\alpha\in (0,1)$, the following holds: for 
\begin{equation*}
L^{-}_{h, \al}:=\left\{x: \phi(x) - h\left[ F_{*}(D^2\phi(x), D\phi(x)) - \alpha \right] <0\right\}\subseteq \R^{\mathbf{d}},
\end{equation*}
we have that for all $\delta \in (0, \bar{\delta}]$, for all $h\in (0,h_{0}]$, and for all $x\in L^{-}_{h,\al}\cap B(x_{0}, r)$, 
\begin{equation}
\limsups_{\eps \to 0} \,\,u^{\eps}(t,x; p^{+}(\phi, \delta)) = a,
\end{equation}
where $p^+(\phi,\delta) =  (a + \delta) \indc_{\{\phi \leq 0\}} + b \indc_{\{\phi> 0\}}$.
\end{enumerate}
\end{list}
The first result we state is a simple consequence of \cite[Theorem 3.1]{BS} and \eqref{eq:approxdualdef}.

\begin{theorem}\label{t.generaldual}
Assume \eqref{j.1}-\eqref{j.4} hold with $F$ as in \eqref{e.Fdef}, and \eqref{eq:approxdualdef} holds. Then for any $p : \R^{\mathbf{d}} \to [a,b]$ defining an interface $\Ga_{0}\subseteq \R^{\mathbf{d}}$, as $\eps\to 0$, $\E^{\eps}_{p} [w_t^\eps]$ converges locally uniformly to $(a,b)$-generalized MCF started from $\Ga_0$.\end{theorem}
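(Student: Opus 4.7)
The plan is to treat this as essentially a bookkeeping exercise: recognize that the four conditions \eqref{j.1}--\eqref{j.4} imposed on the function $u^{\eps}(t,x;p) = Q^{\eps}_x[\V(\W^{\eps}_t;p)=1]$ are precisely the abstract hypotheses of the Barles--Souganidis framework \cite{BS}, so their general convergence theorem applies directly to $u^{\eps}$, and then transfer the resulting convergence to $\E^{\eps}_p[w^{\eps}_t]$ using the approximate duality relation \eqref{eq:approxdualdef}.

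First, I would observe that with the choice of $F$ in \eqref{e.Fdef}, conditions \eqref{j.1}--\eqref{j.4} are exactly the semigroup, monotonicity, equilibrium, and consistency hypotheses appearing in \cite[Theorem~3.1]{BS}, now formulated in terms of the dual representation $u^{\eps}(t,x;p)$ rather than in terms of $\E^{\eps}_p[w^{\eps}_t(x)]$ directly. In particular, the sup/inf envelopes $F^{*}$ and $F_{*}$ defined around \eqref{e.flsc} match the ones used by Barles and Souganidis for MCF, and the half-relaxed limits in Definition~\ref{def:hrlimit} are exactly the tools they use to pass to the limit. Applying \cite[Theorem~3.1]{BS} (together with the associated strong uniqueness / comparison result for the level-set equation \eqref{eq:levelset1}) to the family $\{u^{\eps}(\cdot,\cdot;p)\}_{\eps>0}$, and starting from an initial datum $p$ defining the interface $\Gamma_0$ in the sense of Definition~\ref{d.defint}, one obtains that $u^{\eps}(t,x;p)$ converges locally uniformly to $b$ on $\bigcup_{t>0}\{t\}\times\{u(t,\cdot)>0\}$ and to $a$ on $\bigcup_{t>0}\{t\}\times\{u(t,\cdot)<0\}$, where $u$ is the unique viscosity solution of \eqref{e.mcfd} with initial datum the signed distance to $\Gamma_0$.

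Next, I would invoke the approximate duality relation \eqref{eq:approxdualdef}, which states that for every compact $K \subseteq (0,\infty)\times\R^{\mathbf{d}}$,
\begin{equation*}
\sup_{(t,x)\in K} \bigl|\E^{\eps}_p[w^{\eps}_t(x)] - u^{\eps}(t,x;p)\bigr| = o(1) \quad \text{as } \eps\to 0.
\end{equation*}
Any compact subset $K$ of $\bigcup_{t>0}\{t\}\times\{u(t,\cdot)>0\}$ (respectively $\{u(t,\cdot)<0\}$) is contained in some compact set of $(0,\infty)\times\R^{\mathbf{d}}$, so combining the two displays gives local uniform convergence of $\E^{\eps}_p[w^{\eps}_t(x)]$ to $b$ (respectively $a$) on the corresponding open space-time regions. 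By Definition~\ref{d.lumcf}, this is exactly the conclusion that $\E^{\eps}_p[w^{\eps}_t]$ converges locally uniformly to $(a,b)$-generalized MCF started from $\Gamma_0$.

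I do not expect genuine obstacles here: the entire substance of the theorem has been packaged into the verification of \eqref{j.1}--\eqref{j.4} and \eqref{eq:approxdualdef}, which are the inputs. The only points requiring a minimum of care are (i) checking that the form of $F$, $F^{*}$, and $F_{*}$ used in \eqref{j.4} is the one needed in the BS framework for degenerate elliptic operators with the singular behaviour of MCF at $p=0$; (ii) confirming that the initial data $p^{\pm}(\phi,\delta)$ used in the flow consistency condition are of the admissible piecewise-constant form needed to feed into BS; and (iii) translating the BS conclusion, which is naturally phrased via half-relaxed limits and the envelopes of the initial condition, into the cleaner statement of Definition~\ref{d.lumcf} using the fact that $\Gamma_0$ is the boundary of an open set and hence has empty interior. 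These are essentially translation steps, and the heavy lifting is deferred to the verification of \eqref{j.1} and \eqref{j.4}, which is carried out later in Sections~\ref{s.J1} and \ref{s:genthm}.
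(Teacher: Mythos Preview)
Your proposal is correct and matches the paper's own proof essentially line for line: apply \cite[Theorem~3.1]{BS} (together with \cite[Corollary~3.1]{BS}, which plays the role of your ``strong uniqueness / comparison'' step linking generalized flows to the level-set evolution) to obtain locally uniform convergence of $u^{\eps}(\cdot,\cdot;p)$, and then transfer this to $\E^{\eps}_p[w^{\eps}_t]$ via \eqref{eq:approxdualdef}.
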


Theorem~\ref{t.generaldual} follows easily from \cite[Theorem~3.1]{BS} and \eqref{eq:approxdualdef} (see the Appendix (Section \ref{s.app})). We give a short proof of Theorem \ref{t.generaldual} in the Appendix (Section \ref{s.app}).

\begin{remark}\label{r.j3} We state \eqref{j.3} in a simplified manner (compared to how it appears in \cite{BS}) which is more convenient for most of the models we consider. We also mention that the dependencies in \eqref{j.4} differ slightly from how they are stated in \cite{BS}. This does not impact the proof in \cite{BS}, but it is a bit easier to digest, and such dependencies hold in the present setting. 
\end{remark}

In light of Theorem \ref{t.generaldual}, our main approach to proving Theorem \ref{t.realmain} is to show that the hypotheses of this theorem imply that \eqref{j.1}-\eqref{j.4} hold. For this, we have a brief discussion of the challenges in verifying \eqref{j.1}-\eqref{j.4}, which motivate the hypotheses of Theorem \ref{t.realmain}. 

Conditions \eqref{j.2} and \eqref{j.3} are easily verified. As we will show, \eqref{j.1} will hold true for all dual processes which have a pure branching structure (i.e. \eqref{eq:approxdualdef}). The greatest challenge in all models will be to verify \eqref{j.4}. The meaning of \eqref{j.4} is as follows: one checks whether the level sets of smooth, monotone approximations of $\Ga_{t}$ (as in \eqref{e.gammatdef}) evolve according to the ``predicted'' limiting front propagation (in this case, MCF). If $\left\{\phi=0\right\}$ is smooth and $|D\phi| \neq 0$ on $\left\{\phi=0\right\}$, then in a short time period $h\in (0, h_{0}]$, one expects that the normal velocity has caused the hypersurface to evolve by a displacement of $F(D^{2}\phi, D\phi)h$. If this is true for all smooth hypersurfaces, one obtains a comparison between the level sets as in \eqref{e.gammatdef} and the ``predicted interface'' (i.e. an interface evolving classically according to the same normal velocity field). 

In Theorem \ref{t.realmain}, the fact that \eqref{eq:approxdualdef} holds with an approximate dual that is a pure branching process is important for two reasons. The first is that, in order to have the semigroup property \eqref{j.1}, we require that $(\X^{\eps}_{t}, t\geq 0)_{\eps>0}$ is a pure branching process (see Section \ref{s.J1} for a more detailed discussion). The second reason is related to the proof of convergence to MCF; the proof technique from \cite{EFP2017, DH2021}, which we use here to verify \eqref{j.4}, requires the branching property for the dual. Without the branching property, a key ingredient in the verification of \eqref{j.4}, Lemma~\ref{lemma:gfunMarkov}, fails.

We emphasize that Theorem \ref{t.generaldual} is stated for the flow $F$ as in \eqref{e.Fdef} for convenience and consistency with the rest of this paper. Indeed, the proof given in Section \ref{s.app1} holds for very general flows $F$, satisfying some mild hypotheses (see (i)-(iii) in that section). For other spatial stochastic models satisfying \eqref{eq:approxdualdef}, hypotheses \eqref{j.1}-\eqref{j.3} are nearly automatic, and the greatest challenge in applying Theorem \ref{t.generaldual} to obtain a convergence result will be in verifying \eqref{j.4}. In the case when $F$ is given by \eqref{e.Fdef} (i.e. MCF), while the verification of \eqref{j.4} still requires substantial work, it is streamlined thanks to some arguments used in the short-time convergence results (for smooth initial data) established in \cite{EFP2017} and \cite{DH2021}. 

\section{A Precise Framework for the Voting Algorithm and the Verification of (J2)-(J3)}\label{s:vote}
In this section, we give a precise formulation of the voting algorithm $\V(\cdot,p)$. We conclude the section by verifying that \eqref{j.2}-\eqref{j.3} hold under the assumptions of \eqref{eq:approxdualdef} and hypotheses $\eqref{g.0}-\eqref{g.5}.$

We begin with a brief discussion of basic tree notation we will use throughout this section. For a $N_{0}$-ary tree $\mathcal{T}$, the vertices are labeled according to Ulam-Harris notation. In particular, we let $\emptyset$ denote the root of the tree, and vertices of the tree are associated to a multi-index $\al$ belonging to $\mathcal{U}:=\cup_{n=0}^{\infty} \left\{1, \ldots, N_{0}\right\}^{n}$. For $\alpha = (\alpha_1,\dots,\alpha_n) \in \cU$, the parent of $\alpha$ is the individual with multi-index $(\alpha_1,\dots,\alpha_{n-1})$. For $\alpha,\beta \in \mathcal{U}$, we write $\alpha \vee \beta$ for their concatenation, and we say that $\beta$ is an ancestor of $\alpha$ if and only if $\alpha = \beta \vee \gamma$ for some $\gamma \in \cU$ with $\gamma \neq \emptyset$. Ancestry induces a partial order on $\cU$. Equipped with this basic notation, we can now describe the voting algorithm in a precise mathematical framework. 

\subsection{The Voting Algorithm on Trees}\label{ss.voted}
Let $\mathcal{T}$ be a finite $N_0$-ary tree, and let $\mathcal{L}(\mathcal{T})$ to be the set of leaves of $\mathcal{T}$. We denote by $v(L(\mathcal{T})) = \{v_\alpha : \alpha \in L(\mathcal{T})\}$ an assignment of $\{0,1\}$-valued votes to the leaves. We consider an algorithm $V(\mathcal{T},v(L(\mathcal{T})))$ which assigns a vote of $0$ or $1$ to all vertices of $\mathcal{T}$, with input data given by the (deterministic) vote inputs $v(L(\mathcal{T}))$ at the leaves. The algorithm may be deterministic or random, but since the deterministic algorithm (for the most part) fits into the framework of the random algorithm, we begin with a discussion of the random setting. For $\alpha \in \mathcal{T}$, we let $V_{\al}$ denote the vote assigned to vertex $\alpha$ by the algorithm $V(\mathcal{T},v(L(\mathcal{T})))$.
 
 For each $\alpha \in L(\mathcal{T})$, we set $V_\alpha = v_\alpha\in v(L(\mathcal{T}))$. In order to describe how the non-leaf vertices vote, we fix a function $\Theta: \{0,1\}^{N_0} \to [0,1]$; $\Theta$ is directly connected to the $g$-function, and this connection is described in detail in Section \ref{s:gfunction}. For each non-leaf vertex $\al\in \mathcal{T}$, the vote $V_\al$ is distributed according to a Bernoulli random variable whose parameter, conditional on the votes of its children, $V_{\alpha \vee 1}, \ldots, V_{\al\vee N_{0}}$, is $\Theta(V_{\alpha \vee 1}, \ldots, V_{\al\vee N_{0}})$. To encode this, we introduce the notation 
\[\Theta_\al := \Theta(V_{\alpha \vee 1}, \ldots, V_{\al\vee N_{0}})\]
to denote the parameter attached to vertex $\al$. Hence, $V_\al$ is Bernoulli($\Theta_{\al})$ and  
\begin{equation} \label{e.thetarel}
P[V_{\al}=1\mid V_{\alpha \vee 1}, \ldots, V_{\al\vee N_{0}}]= E[V_{\al}\mid V_{\alpha \vee 1}, \ldots, V_{\al\vee N_{0}}]=\Theta(V_{\alpha \vee 1}, \ldots, V_{\al\vee N_{0}}).
\end{equation}
The sole hypothesis we impose on $\Theta$ is that
\begin{equation}\label{e.theta}
\text{$\Theta$ is nondecreasing}.
\end{equation}
(The above is implicit in the assumption \eqref{g.0} we made on the $g$-function in Section~\ref{s.gint}.) The tree structure allows us to proceed in a directed fashion, from the leaves towards the root, since $\Theta_\alpha$ is measurable with respect to the votes of the vertices on the sub-tree descending from $\alpha$. We assign a vote to each vertex starting from the deterministic leaf inputs by applying the Bernoulli distribution described above at each vertex. In particular we can compute $V_\emptyset = V(\emptyset; \mathcal{T},v(L(\mathcal{T})))$, which is understood as the vote of the root $\emptyset$. More generally, for a subset $\Gamma$ of the vertices, we write $V(\Gamma; \mathcal{T}, v(L(\mathcal{T}))) := \{V_\alpha : \alpha \in \Gamma\}$ to denote votes of individuals with indices in $\Gamma$.

The discussion above is informal, but it is straightforward to construct a probability space, or enrich an existing probability space, to support the evaluation of the random algorithm; one can introduce a collection of IID Uniform$(0,1)$ random variables $(U_\alpha : \alpha \in \mathcal{T})$ and use them to iteratively generate the Bernoulli votes at each step in the algorithm using a standard argument. In fact, conditional on $(U_\alpha : \alpha \in \mathcal{T})$, we may define the algorithm deterministically, i.e. $V_\alpha = \indc_{\{\Theta(V_{\alpha \vee 1},\dots,V_{\alpha \vee N_0})\geq U_\alpha\}}$. In the sequel, the tree $\mathcal{T}$ may be random, but this can be handled in a universal way by taking the index set of the uniform random variables to be $\mathcal{U}$.

We can obtain the smaller class of strictly deterministic voting algorithms as a subclass of the algorithms defined here by restricting the range of $\Theta$ to $\{0,1\}$. In this case, given the votes of its children $V_{\alpha \vee 1},\dots, V_{\alpha \vee N_0}$, the vote $V_\alpha$ is simply {\it equal} to $\Theta(V_{\alpha \vee 1},\dots, V_{\alpha \vee N_0})$.

\subsection{Voting algorithm on the tree induced by the dual process}\label{ss.vt}
We now relate the voting algorithm described above to the spatial branching processes introduced earlier.

We begin with a discussion the tree structure which is associated to the pure branching process $(\W_{t}, t\geq 0)$ which arises from \eqref{eq:approxdualdef}. We consider finite sub-trees of $\mathcal{U}$ with ``full families'' in the sense that, if the tree contains one child of a given vertex, then it contains all the children of that vertex. For such a $N_0$-ary tree, denoted by $\mathcal{T}$, we introduce a time-labelled version by associating to each $\alpha \in \mathcal{T}$ a label $t_\alpha>0$, such that the label of each vertex is strictly larger than the label of its parent. The label $t_\alpha$ is understood as the death time of $\alpha$, i.e. $\alpha \sim s$ if and only if $t_\beta \leq s < t_\alpha$, where $\beta$ is the parent of $\alpha$. Given $t>0$, the historical process run until time $t$, i.e. $(\W_{s}, 0\leq s\leq t)$, traces out a time-labelled tree which records its genealogy and associated branch time of each branching event. The time label of each internal vertex is given by its branch time; for each leaf $\alpha$, we assign the label $t_\alpha = t$. We call the resulting time-labelled tree $\mathcal{T}(\W_{t})$. We observe that $L(\mathcal{T}(\W_t)) = N(t)$, where $N(t)\subset \mathcal{U}$ is the set of individuals alive at time $t$. We write $\alpha \sim s$ if $\alpha \in N(s)$. The state $X_s$ is an element of $(\R^\mathbf{d})^{N(s)}$.

For the historical process $(W_{t}, t\geq 0)$, conditional on $N(t)$, $\W_t$ belongs to $\mathbb{D}([0,t], \R^{\mathbf{d}})^{N(t)}$. For $\alpha \in N(t)$ and $s \in [0,t]$, $\W_s(\alpha)$ is the location at time $s$ of whichever ancestor of $\alpha$ was alive at time $s$. We can and will additionally assume that branch times along each lineage are encoded in $\W_t$, but we do not explicitly write this.

We now describe the voting algorithm associated to $\mathcal{T}(\X_{t})$. For the applications we have in mind, we impose that the input votes are \emph{random}, depending on the spatial information of $X_t$. Let $p : \R^{\mathbf{d}} \to [a,b]$ be measurable, and for ease of notation, let us define $L_{t}:=L(\mathcal{T}(\X_{t}))$. 
We consider the algorithm applied to the tree $\mathcal{T}(\X_t)$, with input votes as follows: conditional on $X_t$, for each $\alpha \in L_t$, the leaf vote $v_\alpha$ is an independently sampled Bernoulli$(p(X_t(\alpha)))$ random variable. We write $\mathcal{V}_p(L_t)$ to denote the random collection of votes assigned to the leaves in this way when the branching process $X_t$ is run until time $t$. In the notation we have now introduced, we have for $E_{x}$ the expectation operator corresponding to $Q_{x}$,
\begin{equation}\label{e:votetreeequivalence}
E_{x}[\V(\W_t;p)]=Q_x\left[\V(\W_t;p)=1\right] = Q_x\left[V(\emptyset; \mathcal{T}(\W_t), \mathcal{V}_p(L_t))=1\right].
\end{equation}
In the above, the expectation averages over the randomness of $\X_t$ and, in the case of a random algorithm, the randomness involved in the evaluation of the algorithm. The advantage of introducing this formulation is that it allows us to describe the voting algorithm $\V(\cdot, p)$ in a completely general sense, based on a voting algorithm $V$ operating on a time-labelled tree, with random inputs on the leaves. This formulation will be useful later in this paper when we prove \eqref{j.1} for general duals with the branching property.

\subsection{The $g$-function} \label{s:gfunction}
We now describe how the function $g:[0,1]^{N_0} \to[0,1]$ introduced in Section \ref{s.gint} arises in this precise voting algorithm. The notion of the $g$-function was introduced in \cite{DH2021}, and it is given by
\begin{equation}\label{e.gdef}
g(p_1,\dots,p_{N_0}) := E_{p_1,\dots,p_{N_0}}[\Theta(\mathsf{V}_1,\dots,\mathsf{V}_{N_0})],
\end{equation}
where $(\mathsf{V}_1,\dots,\mathsf{V}_{N_0})$ is a vector of independent Bernoulli($p_i$) random variables ($i \in [N_0]$) with law $E_{p_1,\dots,p_{N_0}}$, and $\Theta$ is as in Section \ref{ss.voted}.

The $g$-function defined above appears in a direct computation of the voting algorithm. From \eqref{e.thetarel}, we have that, conditional on the votes of the offspring of $\alpha \in \mathcal{T}$, the Bernoulli parameter of $V_\al$ is $\Theta(V_{\alpha \vee 1}, \dots, V_{\alpha \vee N_0}) = \Theta_\alpha$. The offspring votes $V_{\alpha \vee 1}, \dots, V_{\alpha \vee N_0}$ are themselves Bernoulli random variables by construction; if instead of conditioning on values of the votes, we simply condition on their parameters, we obtain that
\begin{equation}\label{e.bcomp} 
\begin{aligned}
P[V_\alpha = 1 \mid \Theta_{\alpha \vee 1}, \dots,\Theta_{\alpha \vee N_0}] &= E[\Theta_\alpha \, | \, \Theta_{\alpha \vee 1}, \dots,\Theta_{\alpha \vee N_0}] \\
&=  E_{\Theta_{\alpha \vee 1}, \dots,\Theta_{\alpha \vee N_0}}[\Theta(\mathsf{V}_1,\dots,\mathsf{V}_{N_0})]\\
&=g(\Theta_{\alpha \vee 1}, \dots,\Theta_{\alpha \vee N_0}).
\end{aligned}
\end{equation}
That is, $g$ computes the Bernoulli parameter of the parent's vote, given the Bernoulli parameters of the votes of its offspring. This encodes how the input randomness at the leaves is passed through the tree in the voting algorithm. Now that we have defined $g$, we remind the readers of the assumptions on $g$ introduced in Section~\ref{s.modelsduals}, \eqref{g.0}-\eqref{g.5}, under which we prove Theorem~\ref{t.generaldual}. 

The importance of the $g$-function derives from its appearance in a formula for the conditional expectation of $\V(\X_t ; p)$ obtained by applying the strong Markov property at the first branch time. We begin with a heuristic description of this property. Suppose that $(\X_t : t \geq 0)$ is a pure branching process in the sense of Definition~\ref{d.defprocess}. Let $\tau$ denote the first branch time of $(X_s, s\geq 0)$ and $\cF_\tau$ the $\sigma$-algebra of the filtration, up to time $\tau$.  For fixed $t>0$, $\V(\X_t ; p)$ is the vote of the root and thus is determined by the votes of its offspring. Conditional on the event $\left\{\tau \leq t\right\}$, the vote of each child is determined by the sub-tree rooted at that child. In particular, if the $i$th child is born at location $Z_i$ at time $\tau$, then by the branching property, its vote equals $1$ with probability $Q_{Z_i}[\V(\X_{t-\tau};p) = 1]$, independent of its siblings. By definition of $g$, if $\tau<t$, then the conditional probability that the root votes $1$ equals $g$ evaluated at these probabilities. Hence, we have
\begin{multline*}Q_x[\V(\X_t;p)=1 \, | \, \cF_\tau]\indc_{\{\tau \leq t\}} \\
= g(Q_{Z_1}[\V(\X_{t-\tau};p) = 1], \dots, Q_{Z_{N_0}}[\V(\X_{t-\tau};p) = 1])]\indc_{\{\tau \leq t\}}.
\end{multline*}
Recall that in Definition \ref{d.defprocess}, the displacements of the offspring are $\left\{\xi_i\right\}_{i\in [N_0]}$. For $t < \tau$, the position of the root individual is $X_t(\emptyset)$.
It is convenient to abuse notation and denote the trajectory of the root individual by $Y_t$, in which case the location of the parent at the first branch event is $Y_{\tau-}$. Since $Y$ is a Hunt process, it follows that $Y_{\tau-} = Y_\tau$ a.s., and hence the offspring positions are given by $Z_i = Y_\tau + \xi_i$. This leads to the more precise version stated below. 

\begin{lemma} \label{lemma:gfunMarkov}
For all $x$ and $t>0$,
\begin{align*}
Q_x[\V(\X_t;p) = 1 \, | \, \cF_\tau] &= g(\mathfrak{p}_1\dots,\mathfrak{p}_{N_0})\indc_{\{\tau \leq t\}} + Q_x [\V(\X_t;p) = 1, \tau > t], 
\end{align*}
where, for $i\in [N_0]$, $\mathfrak{p}_i$ is the $\cF_\tau$-measurable random variable
\[ \mathfrak{p}_i := Q_{Y_{\tau} + \xi_i}[\V(\W_{t-\tau};p) = 1].\]
\end{lemma}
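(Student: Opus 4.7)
\medskip

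\noindent\textbf{Proof plan.} The natural approach is to split the conditional expectation according to whether the first branch time $\tau$ has occurred by time $t$, and then apply the strong Markov property at $\tau$ on the event $\{\tau \leq t\}$. On $\{\tau > t\}$, the root is still the only individual alive, so $L_t = \{\emptyset\}$ and $\V(\X_t;p) = v_\emptyset$ is a single leaf vote; this contributes the second term. On $\{\tau \leq t\}$, the root is an internal vertex of $\mathcal{T}(\X_t)$, so $\V(\X_t;p) = V_\emptyset$ is determined by the Bernoulli$(\Theta(V_1,\dots,V_{N_0}))$ mechanism described in \eqref{e.thetarel}, and the computation reduces to identifying the conditional distribution of the offspring votes $V_1,\dots,V_{N_0}$ given $\cF_\tau$.

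\medskip

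The key step is to show that, on $\{\tau \leq t\}$, conditional on $\cF_\tau$, the votes $V_1,\dots, V_{N_0}$ are independent Bernoulli random variables with parameters $\mathfrak{p}_1,\dots,\mathfrak{p}_{N_0}$. This follows from two ingredients. First, by the strong Markov property at $\tau$ combined with the branching property in Definition~\ref{d.defprocess}, the sub-processes rooted at the offspring positions $Z_i = Y_\tau + \xi_i$ are, conditionally on $\cF_\tau$, independent copies of $\X_{t-\tau}$ under $Q_{Z_i}$. Second, the leaf-vote randomness and the internal Bernoulli randomness used in the voting algorithm are, by construction of the algorithm described in Section~\ref{s:vote}, independent across the offspring sub-trees. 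Hence the vote $V_i$ assigned to child $i$ satisfies $P[V_i = 1 \mid \cF_\tau] = Q_{Z_i}[\V(\W_{t-\tau};p) = 1] = \mathfrak{p}_i$, and conditional independence across $i \in [N_0]$ follows.

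\medskip

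With this conditional independence in hand, one applies the defining property of $g$ from \eqref{e.gdef}. Conditionally on $\cF_\tau$, the root's Bernoulli parameter is $\Theta(V_1,\dots,V_{N_0})$; integrating over $V_1,\dots,V_{N_0}$ (which are conditionally independent Bernoullis with parameters $\mathfrak{p}_i$) yields
\[
Q_x[V_\emptyset = 1 \mid \cF_\tau]\indc_{\{\tau \leq t\}}
= E_{\mathfrak{p}_1,\dots,\mathfrak{p}_{N_0}}[\Theta(\mathsf{V}_1,\dots,\mathsf{V}_{N_0})]\indc_{\{\tau \leq t\}}
= g(\mathfrak{p}_1,\dots,\mathfrak{p}_{N_0})\indc_{\{\tau \leq t\}},
\]
exactly by the definition of the $g$-function. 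Combining this with the contribution from $\{\tau > t\}$ (where $\V(\X_t;p) = v_\emptyset$ is $\cF_t \subseteq \cF_\tau$-measurable after integrating out the leaf-Bernoulli randomness) gives the claimed identity.

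\medskip

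\noindent\textbf{Main obstacle.} The only real subtlety is the careful bookkeeping of the two independent sources of randomness—the spatial/genealogical randomness of $\X$ and the voting randomness $\{U_\alpha\}$ introduced in Section~\ref{s:vote}—and ensuring that the strong Markov property applied to $\X$ at $\tau$ interacts cleanly with the recursive structure of the voting algorithm on $\mathcal{T}(\X_t)$. Once one notes that the $U_\alpha$ are indexed over the universal set $\cU$ and the sub-trees descending from distinct children use disjoint labels, the branching property of $\X$ transfers directly to conditional independence of the quantities $Q_{Z_i}[\V(\W_{t-\tau};p)=1]$, and the rest is a direct application of the definition of $g$.
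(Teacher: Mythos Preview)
Your proposal is correct and follows essentially the same approach as the paper, which gives only the heuristic argument preceding the lemma statement rather than a formal proof: split on $\{\tau \le t\}$ versus $\{\tau > t\}$, use the branching property and strong Markov property at $\tau$ to obtain conditional independence of the offspring votes with parameters $\mathfrak{p}_i$, and then invoke the definition of $g$. Your explicit remark that the auxiliary uniforms $\{U_\alpha\}$ are indexed over $\cU$ and hence split cleanly across the disjoint sub-trees is a useful piece of bookkeeping that the paper leaves implicit.
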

As we will see, this lemma plays a key role in the verification of \eqref{j.4} in the proof of Theorem~\ref{t.generaldual}.

\subsection{Verification of (J2)-(J3)}

In light of the precise framework introduced in the prior sections, we may now verify that \eqref{j.2} and \eqref{j.3} hold under the assumptions of \eqref{g.0}-\eqref{g.5}.

\begin{lemma}\label{lem.j23} Suppose that $(\X^{\eps}_t, t \geq 0)$ is a dual process in the sense of Definition~\ref{d.defprocess} which is associated to a $g$-function satisfying \eqref{g.0}-\eqref{g.5}. Then \eqref{j.2} and \eqref{j.3} are satisfied.
\end{lemma}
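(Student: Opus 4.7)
The plan is to unpack $u^\eps(t,x;p) = Q^\eps_x[\V(\W^\eps_t;p) = 1]$ and work conditionally on the random time-labelled tree $\mathcal{T}(\W^\eps_t)$; both properties then reduce to statements about the voting algorithm acting on a fixed tree. For \eqref{j.2}, I will construct a monotone coupling across initial conditions using \eqref{g.0}, while for \eqref{j.3} I will carry out a bottom-up recursion that exploits the fixed-point property of $g$ from \eqref{g.1}.

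For \eqref{j.2}, I would enrich the probability space with two independent Uniform$[0,1]$ families $\{U^\ell_\alpha\}_{\alpha \in \cU}$ and $\{U_\alpha\}_{\alpha \in \cU}$, independent of $\W^\eps$; these drive the leaf votes and the interior votes respectively. For a given input $p$, define leaf votes $v_\alpha(p) := \indc_{\{U^\ell_\alpha \leq p(\W^\eps_t(\alpha))\}}$ on $L_t$ and propagate upward by $V_\alpha(p) := \indc_{\{U_\alpha \leq \Theta(V_{\alpha \vee 1}(p),\dots,V_{\alpha \vee N_0}(p))\}}$; this reproduces the correct conditional Bernoulli distribution at every vertex and simultaneously couples the voting algorithms across all choices of $p$ on the same realization of the tree. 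If $p \leq \hat{p}$ pointwise then $v_\alpha(p) \leq v_\alpha(\hat{p})$ at every leaf, and since \eqref{g.0} is equivalent to the monotonicity \eqref{e.theta} of $\Theta$, a straightforward induction from the leaves to the root yields $V_\alpha(p) \leq V_\alpha(\hat{p})$ at every vertex. In particular $\V(\W^\eps_t;p) \leq \V(\W^\eps_t;\hat{p})$ under the coupling, and taking expectations under $Q^\eps_x$ proves \eqref{j.2}.

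For \eqref{j.3}, fix $c \in \{a,b\}$, so that $g(c)=c$ by \eqref{g.1}. With $p \equiv c$, the leaf votes are i.i.d.\ Bernoulli$(c)$ conditional on $\mathcal{T}(\W^\eps_t)$. The key claim is: conditional on the tree, the vote attached to every vertex is Bernoulli$(c)$, and the votes attached to any antichain of vertices (no vertex an ancestor of another) are independent. I would prove this by induction on the height of the subtree rooted at $\alpha$. The leaf case is immediate, and for an internal vertex the children form an antichain, so by the inductive hypothesis their votes are conditionally i.i.d.\ Bernoulli$(c)$; the definition \eqref{e.gdef} of $g$ then gives
\begin{equation*}
Q^\eps_x\bigl[V_\alpha = 1 \mid \mathcal{T}(\W^\eps_t)\bigr] = E_{c,\dots,c}\bigl[\Theta(\mathsf{V}_1,\dots,\mathsf{V}_{N_0})\bigr] = g(c,\dots,c) = g(c) = c,
\end{equation*}
while the antichain independence is preserved because the $U_\alpha$'s are independent across vertices and the subtrees feeding disjoint antichain elements are themselves disjoint. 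Applying the claim at $\alpha = \emptyset$ and integrating over the tree yields $u^\eps(t,x;c) = c$, which is \eqref{j.3}.

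Neither step is a serious obstacle: \eqref{j.2} and \eqref{j.3} are the soft conditions of the Barles--Souganidis framework, and the bulk of the work in Theorem~\ref{t.realmain} lies in establishing \eqref{j.1} and \eqref{j.4}. The only mild care required in the present lemma is in tracking the random shape of $\mathcal{T}(\W^\eps_t)$, but this is handled cleanly by conditioning on the tree throughout the argument.
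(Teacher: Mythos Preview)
Your proof is correct and shares the same underlying idea as the paper's: both arguments condition on the tree $\mathcal{T}(\W^\eps_t)$ and propagate the relevant property from the leaves to the root, using the monotonicity of $\Theta$ (equivalently \eqref{g.0}) for \eqref{j.2} and the fixed-point property $g(a)=a$, $g(b)=b$ from \eqref{g.1} for \eqref{j.3}.

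The execution differs slightly. For \eqref{j.2}, the paper argues at the level of Bernoulli \emph{parameters}: it computes $E_x[g(p(X_t(\alpha\vee 1)),\dots,p(X_t(\alpha\vee N_0)))]$ at the penultimate generation, invokes \eqref{g.0} to compare parameters under $p$ and $\hat{p}$, and then iterates toward the root. You instead build an explicit \emph{pathwise} monotone coupling via shared uniforms $\{U^\ell_\alpha\},\{U_\alpha\}$, obtaining $V_\alpha(p)\leq V_\alpha(\hat p)$ almost surely before taking expectations. Your route is slightly more self-contained (the inductive step is a one-line pointwise inequality rather than a comparison of conditional laws), and it also makes transparent the antichain-independence needed in the \eqref{j.3} step, which the paper leaves implicit in its phrase ``iterative back propagation.'' Conversely, the paper's parameter-level argument foreshadows the $g$-function computations used later (e.g.\ in \eqref{e.bcomp} and Lemma~\ref{lemma:gfunMarkov}), so it integrates a bit more smoothly with the rest of the paper. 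Either approach is perfectly adequate for this soft lemma.
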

\begin{proof}
Fix $x$ and $t>0$. We can suppress the dependence on $\eps$ since it plays no role in the argument. To prove \eqref{j.2}, suppose that $p(\cdot)\leq \hat{p}(\cdot)$. This implies that $p(X_{t}(i))\leq \hat{p}(X_{t}(i))$ for all $i\in N(t)$. Let $\al \in \mathcal{T}(\X_t)$ be in the second to last generation, i.e. its children are leaves. By a similar computation as in \eqref{e.bcomp}, we have that for any $x$, for any $t>0$,
\begin{align*}
E_{x}[E[\Theta_{\al}\mid \X_{t}]]&=E_{x}[E[P[v_{\al}=1]\mid \X_{t}]] \\
&=E_{x}[E_{p(X_{t}(\al\wedge 1)), \ldots, p(X_{t}(\al\wedge N_{0}))}[\Theta(\mathsf{V}_{1}, \ldots, \mathsf{V}_{N_{0}})]]\\
&=E_{x}[g(p(X_{t}(\al\wedge 1)), \ldots, p(X_{t}(\alpha \wedge N_{0})))]. 
\end{align*}
By \eqref{g.0}, it follows that, in expectation, the Bernoulli parameter of the parent $\Theta_{\al}$ will be larger with $\hat{p}$ than with $p$, thereby increasing the probability that any parent of the leaf children votes 1. By iterating this argument through the ancestral process $\X_{t}$, using a computation similar to the above, this implies that 
\begin{equation*}
Q_x[\V(\X_t; p) = 1]=E_{x}[\V(\X_{t}; p)]\leq E_{x}[\V(\X_{t}; \hat{p})]=Q_{x}[\V(\X_{t}; \hat{p})=1],
\end{equation*}
as desired.

To prove \eqref{j.3}, we claim that $a,b$ as in \eqref{g.1} are the desired equilibria for the conclusion of \eqref{j.3}.  Indeed, since $a$ is a fixed point of $g$ according to \eqref{g.1}, the above calculation demonstrates that when $p\equiv a$, for $\al$ the parent of any of the leaves, $E[\Theta_{\al}]\equiv a$. Again by iterative back propagation, this implies that $Q_x[\V(\X_t; a) = 1]=a$. An analogous argument can be made for $b$. 
\end{proof}

While this framework of the $g$-function is robust enough for most models we are interested in, we conclude this section by remarking that a significantly more general treatment of the $g$-function is necessary for certain models, such as the nonlinear voter model perturbation in Section \ref{s:nonlinearvoter}. This is due  to the non-vanishing impact of coalescences in the voting algorithm in that setting. We discuss this generalization in Section \ref{s:nonlinearvoter}.

\section{(J1) for duals of pure branching type}\label{s.J1}
In this section, we prove \eqref{j.1}, the special Markov property for dual processes which are of pure branching type. We suppress the dependence on the parameter $\eps>0$, since it plays no role in the argument. We remark that the results in this section are independent of the limiting flow and depend only on the branching structure of the dual. 

In the sequel, $(\X_t, t \geq 0)$ is a spatial branching process in the sense of Definition~\ref{d.defprocess}, and $\V$ is any voting algorithm within the framework of Section~\ref{ss.voted}. No assumptions on the $g$-function are required for the proof of the following. However, the branching structure of $(\X_t, t\geq 0)$ is essential, and the result will not hold in general without it. Thus, if the true dual process does not have a branching structure, \eqref{j.1} can only be verified for an approximate dual with the branching property.

\begin{proposition} \label{prop:J1}
For $p : \R^{\mathbf{d}} \to [a,b]$, let $\V(\W_t;p)$ denote the voting algorithm as described in Section \ref{s:vote}. Then for $0<s<t$, we have 
\begin{equation*}
Q_x[\V(\W_t; p)=1] = Q_x[\V(\W_s; q_{t-s})=1],
\end{equation*}
where $q_{t-s}(y) := Q_y[\V(\W_{t-s}; p)=1]$.
\end{proposition}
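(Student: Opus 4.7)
The plan is to exploit the branching property of $(X_t, t\geq 0)$ together with the recursive construction of the voting algorithm. The central observation is a two-stage decomposition of $\V(\W_t;p)$: the tree $\mathcal{T}(\W_t)$ splits at time $s$ into a ``prefix'' $\mathcal{T}(\W_s)$, whose leaves are the indices in $N(s)$, and for each $\alpha \in N(s)$, an independent subtree $\mathcal{T}^{(\alpha)}$ encoding the descendants of $\alpha$ during $[s,t]$. By the branching property, conditional on $\cF_s$, each $\mathcal{T}^{(\alpha)}$ together with its spatial labels has the same law as the historical process $\W_{t-s}$ started at $X_s(\alpha)$, and the subtrees are mutually independent.

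I would next argue that the voting algorithm on $\mathcal{T}(\W_t)$ can be evaluated in two stages. For each $\alpha \in N(s)$, run the algorithm on $\mathcal{T}^{(\alpha)}$ with Bernoulli$(p(X_t(\beta)))$ inputs at its leaves and independent internal uniforms $(U_\beta)_{\beta \in \mathcal{T}^{(\alpha)}}$, producing an intermediate vote $V_\alpha$ at the root of $\mathcal{T}^{(\alpha)}$; then feed $\{V_\alpha : \alpha \in N(s)\}$ as the leaf inputs for a voting procedure on $\mathcal{T}(\W_s)$, using the remaining uniforms $(U_\beta)_{\beta \in \mathcal{T}(\W_s)}$. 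Since the algorithm is constructed vertex-by-vertex from the leaves toward the root using IID uniforms $(U_\alpha)_{\alpha \in \mathcal{U}}$, this two-stage evaluation is pointwise identical to the one-shot evaluation. Combining with the branching property, one obtains that conditional on $\cF_s$, the votes $\{V_\alpha\}_{\alpha \in N(s)}$ are mutually independent, and each satisfies
\begin{equation*}
Q_x[V_\alpha = 1 \mid \cF_s] = Q_{X_s(\alpha)}[\V(\W_{t-s};p)=1] = q_{t-s}(X_s(\alpha)).
\end{equation*}

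To conclude, I would observe that this conditional distribution of intermediate votes matches exactly the conditional distribution of the leaf inputs used in computing $\V(\W_s; q_{t-s})$: by construction (see Section~\ref{ss.vt}), the leaf votes in $\V(\cdot;q_{t-s})$ are, conditional on $\W_s$, independent Bernoulli$(q_{t-s}(X_s(\alpha)))$ random variables. Both procedures then propagate the leaf votes to the root using an equidistributed family of IID uniforms indexed by the vertices of $\mathcal{T}(\W_s)$, so the resulting root votes share a common distribution; taking expectations and using \eqref{e:votetreeequivalence} yields the claim. The main obstacle is the careful bookkeeping of the randomness, namely separating the uniforms attached to the subtrees $\mathcal{T}^{(\alpha)}$ from those attached to $\mathcal{T}(\W_s)$ and checking that the branching property transfers to the enlarged probability space carrying the voting randomness; this is essentially automatic because $(U_\alpha)_{\alpha \in \mathcal{U}}$ are IID and independent of the branching dynamics, but it must be set up explicitly before invoking the Markov property.
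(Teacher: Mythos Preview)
Your proposal is correct and follows essentially the same approach as the paper: both decompose the evaluation of $\V(\W_t;p)$ at the time-$s$ cut (the paper via the tree identity \eqref{e_tree_alg_prop}, you via the explicit subtree decomposition), then use the branching property to identify the conditional law of the time-$s$ votes as independent Bernoulli$(q_{t-s}(X_s(\alpha)))$, and finally match this to the leaf-input law defining $\V(\W_s;q_{t-s})$. Your treatment is slightly more explicit about separating the auxiliary uniforms $(U_\alpha)$ across the two stages, which the paper leaves implicit, but the argument is otherwise the same.
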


Before we give the proof of this result, we comment on an interesting point. The condition \eqref{j.1} is identical to the condition (H1) from \cite{BS}, and it is a natural semigroup property. However, if written in terms of $w^\eps_t$ instead of the dual, and ignoring the error term coming from approximate duality, one obtains the following rather exotic form of the Markov property: for every $t,h>0$ and every $x \in \R^{\mathbf{d}}$,
\begin{equation*}
\E^{\eps}_{\E^{\eps}_{p} [w^{\eps}_t]} [w^{\eps}_h(x)] = \E^{\eps}_{p}[w^{\eps}_{t+h}(x)].
\end{equation*} 
It is unclear in general how such a property should arise, except, as we prove momentarily, when $w^\eps_t$ is equipped with a dual process with the branching property. It is also remarkable that the condition as formulated in a purely analytic setting in \cite{BS} corresponds precisely, when applied to a stochastic model, to the notion that the dual process has the branching property. 

\begin{proof}[Proof of Propostion~\ref{prop:J1}]
As in Section \ref{s:vote}, we begin with a discussion based entirely on a fixed, time-labelled tree $\mathcal{T}$. Since $\mathcal{T}$ is a tree and the vote of each individual is determined by the votes of its offspring, the outcome $V\big(\mathcal{T},v(L(\mathcal{T}))\big)$ can by computed if we know the votes of all individuals at any given time height in the tree. Abusing notation slightly, we write $\mathcal{T}_s$ to denote the tree $\mathcal{T}$ with vertices $t_{v}\leq s$, for $s\leq \max_{v\in \mathcal{T}} t_{v}$, and we denote $L_{s}:=L(\mathcal{T}_{s})$. We then have
\begin{equation}\label{e_tree_alg_prop}
 V\big(\emptyset; \mathcal{T},v(L(\mathcal{T}))\big) = V\big(\emptyset; \mathcal{T}_s, V(L_s ; \mathcal{T},v(L(\mathcal{T})))\big), 
\end{equation}
where $V\big(L_s ; \mathcal{T},v(L(\mathcal{T}))\big)$ denotes the votes assigned to the leaves of $\mathcal{T}_s$, in the course of the evaluation of $V\big(\mathcal{T},v(L(\mathcal{T}))\big)$. 

From \eqref{e_tree_alg_prop}, we obtain that for $0<s<t$, with $\mathcal{T}(\W_{t})=\mathcal{T}_{t}$ and $\mathcal{T}(\W_{s})=\mathcal{T}_{s}$,  
\begin{equation}\label{e:lowertree}
Q_x\left[V(\emptyset; \mathcal{T}_{t}, \mathcal{V}_p(L_t))=1\right]=Q_x[V(\emptyset; \mathcal{T}_{s}, V(L_s; \mathcal{T}_{t}, \mathcal{V}_p(L_t)))=1], 
\end{equation}
where we recall that $\mathcal{V}_p(L_t)$ is the random collection of votes assigned to the leaves via the function $p$.  
That is, given the assignment of leaf votes $\mathcal{V}_p(L_t)$, the vote of the root is equal to the output of the algorithm on $\mathcal{T}_s$, when the input of the leaves $L_{s}$ is the assignment of votes of time $s$ individuals which are governed by the algorithm run on the orginal tree $\mathcal{T}_t$, with leaf votes $\mathcal{V}_p(L_t)$. 

The final point is the conditional distribution of $V(L_s; \mathcal{T}_t, \mathcal{V}_p(L_t))$ given $X_s$. Conditional on $X_s$, by the branching property, each individual $\alpha \in L_s$ is the root of an independent copy of $X$, started from location $X_s(\alpha)$. (That is, up to a relabeling.) In particular, because the vote $V_\alpha$ corresponding to individual $\al$ depends only on the subtree of height $t-s$ descending from the individual $\alpha$ at time $s$, we have the following: for $\al\in L_{s}$,
\begin{equation}
Q_x[V\big(\alpha ; \mathcal{T}_t, \mathcal{V}_{p}(L_t))\big) =1 \,| \, X_s]= Q_{X_s(\al)}[V(\emptyset; \mathcal{T}_{t-s},\mathcal{V}_p(L_{t-s})) =1]. 
\end{equation}
We furthermore remark that the right hand side above is equal to $q_{t-s}(X_s(\alpha))$. In particular, since $V$ can only take the states of $0$ or $1$, given $X_s$, we have
\begin{equation} \label{e:condvotedist}
\left\{V\big(\alpha;\mathcal{T}_t, \mathcal{V}_p(L_t))\big) : \alpha \in L_s\right\} \stackrel{d}{=} \bigotimes_{\alpha \in L_s} \text{Bernoulli}(q_{t-s}(X_s(\alpha))),
\end{equation}
where $\otimes$ denotes an independent product. 

Note that the above display is a characterization of the distribution of $V\big(L_s; \mathcal{T}_t, \mathcal{V}_p(L_t))\big)$, given $X_s$. Returning to \eqref{e:lowertree}, we condition on $X_s$ and apply \eqref{e:condvotedist}, to obtain  
\begin{align*}
Q_x[V(\emptyset; \mathcal{T}_t,\mathcal{V}_p(L_t))=1 \, | \, X_s] &= Q_x[V\big(\emptyset; \mathcal{T}_s, V(L_s; \mathcal{T}_t, \mathcal{V}_p(L_t))\big)=1 \, | \, X_s] \notag
\\ &= Q_x[V(\emptyset; \mathcal{T}_s, \mathcal{V}_{q_{t-s}}(L_s))=1\, | \, X_s].
\end{align*}
Taking expectations, we obtain
\begin{equation*}
Q_x[V(\emptyset; \mathcal{T}_t,\mathcal{V}_p(L_t))=1]= Q_x[V(\emptyset; \mathcal{T}_s,\mathcal{V}_{q_{t-s}}(L_s))=1].
\end{equation*}
Returning to the original notation via \eqref{e:votetreeequivalence}, this is equivalent to
\begin{equation*}
Q_x[\V(\W_t; p )=1] = Q_x[\V(\W_s ; q_{t-s})=1],
\end{equation*}
as asserted.\end{proof}

\section{Verification of (J4) in the mean curvature case} \label{s:genthm}
We now present a general framework and set of assumptions on the (approximate) dual process which will allow us to verify \eqref{j.4} with $F$ as in \eqref{e.Fdef}. Throughout this section, we assume that \eqref{eq:approxdualdef} is in place; in particular, there is an approximate dual with a branching/tree structure, and we will drop the term ``approximate'' and refer to it simply as the dual.

We consider a family of dual processes parameterized by the scaling parameter $\eps>0$. The dual $(X_t^\eps, t\geq 0)$ and its historical process are of the form given in Definition~\ref{d.defprocess}. The set-up and notation in this setting is summarized as follows: $(X^\eps_t : t \geq 0)$ is a branching Markov process, whose law when started from a single individual at $x$ is denoted $Q^\eps_x$, with branching and spatial motion as below.

\begin{itemize}
\item {\bf Spatial motion.} In between branch times, individuals evolve in space as independent copies of a Hunt process $(Y_t, t \geq 0)$  on $\R^{\mathbf{d}}$, with paths in the Skorokhod space $\mathbb{D}([0, \infty), \R^{\mathbf{d}})$, whose $\eps$-dependent law we denote by $P^{Y,\eps}_x$ when $Y_0 = x$.  
\item {\bf Branching.} With branch rate $\gamma_\eps := \gamma \eps^{-2}$, for $\gamma>0$, individuals branch into $N_0$ individuals. After giving birth, an individual is removed from the population. At the branch time, if the location of the parent is $y$, then the offspring displacements have joint law $\mu^\eps_y$; thus, for a branching event with parent location $y$, the offspring location vector is $(y+\xi_1,\dots, y+\xi_{N_0})$, where $(\xi_1,\dots,\xi_{N_0})$ is a sample from $\mu^\eps_y$. The map $y \mapsto \mu^\eps_y$ is assumed to be measurable for every $\eps>0$.
\end{itemize}

In order to refer back easily to these various constants, we collect all of them here as a set parameters which are universal to any model under consideration. We signify dependence on $\eqref{e.univconstants}$ as dependence on any of the following constants:
\begin{equation}\label{e.univconstants}
\begin{cases}\tag{$\heartsuit$}
\text{$\mathbf{d}$, the spatial dimension,}\\
\text{$\ga$, the branching rate parameter of the unscaled process $\W(t)$,}\\
\text{$\norm{g}_{C^{2}([0,1])}$, the $g$-function introduced in Section \ref{s:vote},}\\
\text{$\delta_{*}=\delta_{*}(\norm{g}_{C^{2}([0,1])})$ and $c_{0}\in (0,1)$, in \eqref{g.5}},\\
\text{$N_{0}$, the number of children in a branching event,}\\
\text{$k>1$, $\bar{C}, \bar{c}, \eta$ as in \eqref{a.1} and \eqref{a.2}}.
\end{cases}
\end{equation}

The main result of this section is the following. 

\begin{theorem} \label{thm_J4_general}
Suppose that $(w_t^\eps)_{\eps > 0}$ is a family of stochastic spatial models equipped with a family of (approximate) duals $(\W^{\eps}_t)_{\eps >0}$ satisfying \eqref{eq:approxdualdef}, where $(\W^{\eps}_t, t\geq0)$ has the form described above and satisfies \eqref{a.1}, \eqref{a.2}, and $\V$ is associated to a $g$-function satisfying \eqref{g.0}-\eqref{g.5}. Then \eqref{j.4} holds with $F$ defined as in \eqref{e.Fdef}.
\end{theorem}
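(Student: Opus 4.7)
The plan is to verify \eqref{j.4}(i); part (ii) follows by an entirely symmetric argument exploiting the symmetry $g(b-\delta)+g(a+\delta)=2\mu$ from \eqref{g.2} (i.e., the ``flip'' $w \mapsto 1-w$). Fix the data $x_0,r,\phi,\alpha$ and write $\nu:= D\phi(x_0)/|D\phi(x_0)|$; by continuity of $D\phi$ and compactness of $\overline{B(x_0,r)}$, after shrinking $r$ if necessary, we may assume $D\phi$ is nearly parallel to $\nu$ throughout $\{\phi=0\}\cap B(x_0,r)$ and that $|D\phi|$ is bounded below. By \eqref{eq:approxdualdef}, it suffices to prove the corresponding $\liminf$ statement for $Q^\eps_x[\V(\W^\eps_h;p^-(\phi,\delta))=1]$.

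First, I would couple the dual $\W^\eps_h$ to an $N_0$-ary branching Brownian motion $\B^\eps_h$ of rate $\gamma\eps^{-2}$. Using \eqref{a.1} and \eqref{a.2}, on a good event of probability $1-o(1)$, each of the $O(\eps^{-2})$ branch events along each lineage incurs spatial deviation at most $\eps^{k+2}$; iterating gives uniform sup-norm control at scale $\eps^k=o(\sqrt{h})$ across all lineages over the fixed time horizon $h$. On this event the voting outcomes on $\W^\eps_h$ and $\B^\eps_h$ agree with probability $1-o(1)$, since the Bernoulli parameters prescribed by $p^-(\phi,\delta)$ depend on leaf positions only through the indicator of $\{\phi\ge 0\}$, and lineages are displaced by strictly less than $\dist(\text{lineage},\{\phi=0\})$ except on a negligible part of the tree.

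Next, I would reduce the multidimensional BBM voting to a one-dimensional one, via an argument in the spirit of Proposition~\ref{prop:onedcouplefulltime}. Taylor-expanding $\phi$ at $x$, the relation $\phi(y)\ge 0$ for $y=x+z$ with $|z|\lesssim\sqrt{h}$ becomes, up to controlled errors,
\begin{equation*}
\nu\cdot z \ge -\frac{\phi(x)}{|D\phi(x)|} - \frac{1}{2|D\phi(x)|} z^{\top} D^2\phi(x)\, z.
\end{equation*}
Project each BBM lineage onto $\nu$; the perpendicular $(\mathbf{d}-1)$-dimensional part is an independent Brownian motion whose quadratic contribution to the above inequality concentrates, after averaging along every lineage and over every branch event, at $\tfrac{h}{2|D\phi(x)|}\tr\bigl[(I-\nu\otimes\nu)D^2\phi(x)\bigr] = -h\,F^*(D^2\phi(x),D\phi(x))/|D\phi(x)|$. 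Thus on the good event the event $\{\phi(Y^\alpha_h)\ge 0\}$ for leaf $\alpha$ matches up to $o(1)$ with the 1D event
\begin{equation*}
\nu\cdot(Y^\alpha_h-x) \ge \frac{1}{|D\phi(x)|}\bigl(-\phi(x)+h\,F^*(D^2\phi(x),D\phi(x))\bigr) + o(h),
\end{equation*}
and by the hypothesis $x\in L^+_{h,\alpha}$, the right-hand side is at most $-\alpha h/|D\phi(x)|+o(h)$, i.e., a strictly negative drift of order $h$ in the 1D projection.

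Finally, I would exploit bistability of the $g$-function. In the reduced 1D problem, the leaves sitting in the ``drifted'' half-line (above) receive a Bernoulli($b-\delta$) vote and the rest receive Bernoulli($a$). Because the negative threshold is $-c\alpha h$ for some $c>0$ while each lineage's projected motion is a 1D Brownian motion over time $h$, the probability that a given leaf lies above the threshold tends to $1$ as $\eps\to 0$ along the spatial scales relevant to the $N_0$-ary tree with $O(|\log\eps|)$ generations. Applying Lemma~\ref{lemma:gfunMarkov} iteratively with \eqref{g.0}, the Bernoulli parameter of the root is bounded below by $g^{(n)}(b-\delta-o(1))$ for $n$ as large as desired. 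Since $b-\delta>\mu$, and $g$ has $b$ as an attracting fixed point with $g'(b)<1$ by \eqref{g.3}, \eqref{g.5} ensures exponential convergence $g^{(n)}(b-\delta)\to b$. Choosing $\bar\delta,h_0$ small enough in terms of $\|\phi\|_{C^4(\overline{B(x_0,r)})}$ and $\heartsuit$ yields $\liminf_*u^\eps\ge b$, and the reverse inequality is trivial since $u^\eps\le b$ by \eqref{j.2}, \eqref{j.3}. The principal obstacle is the 1D reduction paired with the quantitative matching between the second-order Taylor error, the BBM coupling error, and the buffer $\alpha h$ produced by $L^+_{h,\alpha}$; all of these must be dominated by the curvature drift at the correct order in $h$ uniformly over the tree, which is precisely the point where the structure of the mean-curvature operator $F^*$ enters the probabilistic argument.
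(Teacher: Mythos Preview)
Your overall intuition---couple to BBM, reduce to a one-dimensional process, then iterate the $g$-function---is the right shape, but the implementation has a genuine gap at the coupling step that causes the whole argument to collapse.

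The core problem is time scales and tree size. Assumption \eqref{a.1} only couples a single lineage to Brownian motion over time intervals of length at most $\eps^2|\log\eps|^2$, not over the constant-order horizon $h$. More seriously, over time $h$ the branching tree has depth $\sim \gamma h\eps^{-2}$, hence $N_0^{O(\eps^{-2})}$ leaves. A union bound over all lineages against the stretched-exponential error $\bar{C}e^{-\bar{c}\eps^\eta}$ in \eqref{a.1}--\eqref{a.2} fails catastrophically. Your claim that ``the voting outcomes on $\W^\eps_h$ and $\B^\eps_h$ agree with probability $1-o(1)$'' therefore has no support. Relatedly, the sentence ``the probability that a given leaf lies above the threshold tends to $1$ as $\eps\to 0$'' is not right: a one-dimensional Brownian motion run for time $h$ started at distance $c\alpha h$ from a threshold lands on the wrong side with probability bounded away from $0$, so you cannot simply feed a near-$b$ parameter into $g^{(n)}$.

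What the paper does instead is a two-stage argument that avoids ever looking at the whole tree at once. First, on the short time scale $t\in[\sigma_1\eps^2|\log\eps|,\sigma_2\eps^2|\log\eps|]$, the tree has only $O(|\log\eps|)$ generations (Lemma~\ref{lemma:regtree}), so the union bound \emph{does} work and one proves that an interface has formed (Proposition~\ref{prop:interformgen}). Second, for propagation out to time $h$, the paper never couples the full tree: it applies the strong Markov property only at the \emph{first} branch time $\tau\sim\eps^2$ (Lemma~\ref{lemma:gfunMarkov}), uses an inductive hypothesis comparing $Q^\eps_x[\V(\W_t;p^+)=1]$ to the one-dimensional quantity $P^\eps_{d(t,x)+K\eps|\log\eps|}[\V(\B_t)=1]$, and bootstraps this comparison forward in increments of $m_0\eps^{k+2}$ (Proposition~\ref{prop:propagationgen}). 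The one-dimensional reduction itself is not a Taylor expansion of $\phi$ along a fixed direction $\nu$: it goes through the signed distance function $d(t,\cdot)$ to the time-dependent level set $\{\psi_\alpha(t,\cdot)=0\}$, which satisfies $|Dd|=1$ and is a strict supersolution of the heat equation (Proposition~\ref{p.distprop}(ii)). It\^o's formula then gives $d(t-s,W^{\mathbf{d}}_s)\le d(t,x)+W^1_s-\alpha s/(4L)$ exactly (Corollary~\ref{corollary:coupling}), which is what delivers the curvature drift rigorously. Your Taylor-projection step would only recover this to leading order near a single point and would not survive the iteration.
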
 

\begin{remark} (a) We do not claim that these are the sharpest conditions under which our result holds. Indeed, they can likely be relaxed. The method of proof requires that the dual trajectories can be approximated by Brownian motion up to some polynomial error in $\eps$, so the stretched exponential probability of exceeding the error can be relaxed to polynomial decay of sufficiently high order. However, these conditions are quite mild and indeed hold in the cases of interest. More precisely, in the scaling regimes where \eqref{eq:dualapproxdual} holds, the dual lineages can generally be coupled with a Brownian motion to within an exponentially small (or stretched exponentially small) error with (stretched) exponentially decaying probability of failure. Furthermore, stretched exponential decay holds over constant order time-scales. The assumption \eqref{a.2} only assumes that it holds over a time-scale of order $\eps^2|\log \eps|^2$; this could be further relaxed to $\eps^2 |\log \eps|$ (at least) but the proof is simplified with the assumption as given.

(b) The choice $\gamma_\eps = \gamma \eps^{-2}$ for the branching rate is for convenience; the proof works under the relaxed assumption that $\eps^2 \gamma_\eps \to \gamma$.
\end{remark} 

From this point on through the rest of this section, in order to lighten notational load, we suppress the dependence of $X_t^\eps$ and $\X_t^\eps$ on $\eps$ and simply write $X_t$ and $\X_t$. The dependence on $\eps$ is still apparent through the probability measure $Q^\eps_x$.

We remark that although \eqref{j.4} consists of two claims, we will only prove (ii). The proof of (i) is completely analogous and hence omitted. Hereafter when we discuss the proof of \eqref{j.4}, it is understood that we are referring to the proof of \eqref{j.4}(ii). 

We now fix such a function and some relevant notation which is in effect for the remainder of this section. Let $(\phi,B(x_{0},r))=(\phi,B)$ be a pair which satisfies
\begin{equation}\label{e.phidef}
\begin{cases}
\text{$\phi : \R^{\mathbf{d}} \to \R$ is a smooth function satisfying $\{\phi \leq 0\} \subseteq B$,}\\
\text{$D\phi(\cdot) \neq 0$ on $\{\phi = 0\}$.}
\end{cases}
\end{equation}

In order to prove \eqref{j.4}(ii), we will show that given $(\phi, B)$ as above, there exists $h_0=h_{0}( \norm{\phi}_{C^{4}(\overline{B})}, \text{\ref{e.univconstants}})\in (0,1)$ such that for every $\alpha\in (0,1)$, the following holds: for 
\begin{equation*}
L^{-}_{h, \al}:=\left\{x: \phi(x) - h\left[ F_{*}(D^2\phi(x), D\phi(x)) - \alpha \right] <0\right\}\subseteq \R^{\mathbf{d}},
\end{equation*}
for all $\delta \in (0, \delta_{*}]$ where $\delta_{*}$ is defined in \eqref{g.5}, for all $h\in (0,h_{0}]$, and for all $x\in L^{-}_{h,\al}\cap B$, 
\begin{equation*}
\limsups_{\eps \to 0} \,\,u^{\eps}(t,x; p^{+}(\phi, \delta)) = a,
\end{equation*}
where 
\begin{equation}\label{e.p0+def}
p^+(\phi,\delta) =  (a + \delta) \indc_{\{\phi \leq 0\}} + b \indc_{\{\phi> 0\}}.
\end{equation}

We note that in the verification of \eqref{j.4}(ii), we take $\bar{\delta} = \delta_*$, as defined in \eqref{g.5}, although this is mainly for convenience; the desired claims should hold for all $\delta < \avg - a$.

The proof of the above claim (and hence Theorem~\ref{thm_J4_general}) follows the same two-step procedure as in \cite{EFP2017} and \cite{DH2021}; we first show that an interface forms on a time-scale of order $\eps^2 |\log \eps|$, and then we show that the interface propagates like MCF over constant order time-scales. These arguments are given in Sections~\ref{ss:formation} and \ref{ss:propagation}, after which we complete the proof of Theorem~\ref{thm_J4_general} in Section~\ref{ss:J4holds}.

Before diving into the proof of \eqref{j.4}(ii), we begin with an analysis of the signed distance function to the boundary of $L^{-}_{h,\al}$, which we present in Section \ref{s:distance}. We establish further preliminaries on a one-dimensional BBM model in Section \ref{ss.bbm1}.

\subsection{Distance functions and the linearized level set equation} \label{s:distance}
 We fix $\al\in (0,1)$ throughout this section. Our approach will be based on analyzing the signed distance function to level sets of the function $\psi=\psi_{\al}: [0, \infty)\times \R^{\mathbf{d}}\rightarrow \R$, which we now define. 
 
 Fix $(\phi,B)$ satisfying \eqref{e.phidef}, and let 
\begin{equation}\label{e.psialdef}
\psi(t,x)=\psi_\alpha(t,x) := \phi(x) - t\left[ F_*(D^2 \phi(x),D\phi(x)) -\alpha \right],
\end{equation} 
where $F_{*}$ is defined in \eqref{e.flsc}. 

Since $\phi$ is smooth and $|D\phi(\cdot)|\neq 0$ on $\left\{\phi=0\right\}$, there exists a constant $h_{0}\in (0,1)$ such that $\left\{\psi(t, \cdot)=0\right\}\subseteq \R^{\mathbf{d}}$ is a smooth (at least $C^{1}$) $(\mathbf{d}-1)$-dimensional hypersurface for all $t\in [0, h_{0}]$. In particular, $|D\psi(t,\cdot)|\neq 0$ on $\left\{\psi(t, \cdot)=0\right\}$ for all $t\in [0, h_{0}]$. We note that $h_{0}$ depends on $\norm{\phi}_{C^{2}(\overline{B})}$, and for reasons which will become clear later in the proof, we will allow $h_{0}=h_{0}(\norm{\phi}_{C^{4}(\overline{B})})$. 

For each $t\geq 0$, we define three sets associated to $\psi$:
\begin{equation}\label{e.Ldefs}
L^0_{t} = \{\psi(t,\cdot) = 0\},\quad  L^+_{t} = \{\psi(t,\cdot) > 0\}, \quad L^-_{t} = \{\psi(t,\cdot) < 0\}. 
\end{equation}
In order to identify the normal velocity with which the $(\mathbf{d}-1)$-dimensional interface $\left\{\psi(t, \cdot)=0\right\}$ evolves, we note that by definition, for $t\in [0, h_{0}],$
\begin{align*}
\partial_{t}\psi=-F_{*}(D^{2}\phi, D\phi)+\al=-\frac{F(D^{2}\phi, D\phi)-\al}{|D\psi|}|D\psi|, 
\end{align*}
and hence by \eqref{e.normvel}, the normal velocity of this interface in the time interval $[0,h_{0}]$ must be given by
\begin{equation}\label{e.psinv}
V_{n}=\frac{F(D^{2}\phi, D\phi)-\al}{|D\psi|}.
\end{equation}

For each $t>0$, let $d(t, \cdot)=d_{\al}(t, \cdot)$ denote the signed distance function to $L^0_{t}$, with the convention that $d$ has the same sign as $\psi$, i.e. $d(t,\cdot)< 0$ on $L^-_{t}$ and $d(t,\cdot) > 0$ on $L^+_{t}$. We now present several properties of $d(t, \cdot)$ which we will use throughout our analysis.
\begin{proposition} \label{p.distprop} Let $\al, (\phi, B), \psi, d$ be as above. There exists a constant $h_{0}=h_{0}(\norm{\phi}_{C^{4}(\overline{B})})\in (0,1)$ such that the following holds: 
\begin{enumerate}[(i)]
\item For every $t\geq 0$, 
\begin{equation}\label{e.eikonal}
\begin{cases}
|Dd(t, \cdot)|=1&\text{in $\R^{\mathbf{d}}\setminus L^{0}_{t}$},\\
d(t,\cdot)=0&\text{in $L^{0}_{t}$}, 
\end{cases}
\end{equation}
pointwise almost everywhere and in the viscosity sense.
\item There exists $r_{0}=r_{0}(\norm{\phi}_{C^{4}(\overline{B})})\in (0,1)$ such that $d\in C^{1,2}(Q_{h_{0}, r_{0}})$ where $Q_{h_{0}, r_{0}}:=\left\{(t,x): t\in (0, h_{0}), |d(t,x)|< r_{0}\right\}$, and
\begin{equation} \label{e_distanceheatbd}
\partial_{t}d(t,x) - \frac{1}{2} \Delta d(t,x) \geq \frac{\alpha}{4|D\psi(t,x)|} \quad \text{in $Q_{h_{0}, r_{0}}$}.
\end{equation}
\item There exists a constant $C_{1}=C_1(\norm{\phi}_{C^{4}(\overline{B})})\in [1, \infty)$ and $\tau_0\in (0,h_{0}/2)$ such that for all $t \in [0,h_0 - \tau_0]$ and $s \in [t,t+\tau_0]$,
\begin{equation*}
\sup_{y\in \R^{\mathbf{d}}} |d(t,y) - d(s,y)| \leq C_1|t-s|.
\end{equation*}
In particular, there exists a constant $C_{1}$ (possibly relabeled) such that 
\begin{equation} \label{e_lipschitz_distance}
\sup_{y\in \R^{\mathbf{d}}}\max_{t\in [0, h_{0}]}|\partial_{t}d(t,y)|\leq C_{1}.
\end{equation}
\end{enumerate}
\end{proposition}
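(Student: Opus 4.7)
The plan is to deduce regularity of $d$ from smoothness of $\psi$, then verify the parabolic lower bound in (ii) by a direct calculation on the evolving interface $L^0_t$, extending to a tubular neighborhood via continuity. Part (i) is the standard characterization of the signed distance function to a closed set: $|Dd|=1$ holds a.e.\ by Rademacher's theorem, and the viscosity formulation follows by the usual touching argument using optimality of the distance.

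For the regularity underlying (ii), since $\phi\in C^\infty$ and $|D\phi|\neq 0$ on $\{\phi=0\}$, there is a neighborhood of $\{\phi=0\}$ on which $|D\phi|\geq c>0$, so $F_*(D^2\phi,D\phi)$ is smooth there and $\psi$ is $C^4$ uniformly in $t\in[0,1]$. Choosing $h_0=h_0(\norm{\phi}_{C^4(\overline B)})$ small ensures $|D\psi(t,\cdot)|\geq c/2$ on a neighborhood of $L^0_t$ for $t\in[0,h_0]$, so $L^0_t$ is a smooth $(\mathbf{d}-1)$-hypersurface. The signed distance to a $C^3$-hypersurface is $C^2$ in a tubular neighborhood whose radius is controlled by the reach, and smooth dependence on $t$ yields $d\in C^{1,2}(Q_{h_0,r_0})$ for some $r_0=r_0(\norm{\phi}_{C^4(\overline B)})$.

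For the inequality, I compute on $L^0_t$ that $Dd=D\psi/|D\psi|$ and
\begin{equation*}
\Delta d \big|_{L^0_t} = \mathrm{div}\Big(\tfrac{D\psi}{|D\psi|}\Big) = \tfrac{1}{|D\psi|}\tr\Big[\Big(I - \tfrac{D\psi\otimes D\psi}{|D\psi|^2}\Big)D^2\psi\Big] = -\tfrac{2F(D^2\psi,D\psi)}{|D\psi|}.
\end{equation*}
Differentiating $\psi(t,y(t))=0$ along a normal trajectory $y(t)\in L^0_t$ yields the normal velocity $V_n=-\partial_t\psi/|D\psi|=(F_*(D^2\phi,D\phi)-\alpha)/|D\psi|$, and the convention $d>0$ where $\psi>0$ gives $\partial_t d\big|_{L^0_t}=-V_n$. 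Combining,
\begin{equation*}
\Big(\partial_t d-\tfrac12\Delta d\Big)\Big|_{L^0_t}=\frac{\alpha+F(D^2\psi,D\psi)-F_*(D^2\phi,D\phi)}{|D\psi|}.
\end{equation*}
Since $D\psi-D\phi$ and $D^2\psi-D^2\phi$ are both $O(t\norm{\phi}_{C^4})$ and $F$ is smooth away from $p=0$, the numerator is $\alpha+O(t\norm{\phi}_{C^4})$. To extend to $Q_{h_0,r_0}$, I differentiate $|Dd|^2=1$ in $t$ to obtain $Dd\cdot D(\partial_t d)=0$, so $\partial_t d$ is constant along normals to $L^0_t$; the standard expansion $\Delta d(t,x)=\sum_i \kappa_i(\pi_t(x))/(1+d(t,x)\kappa_i(\pi_t(x)))$ in terms of the principal curvatures of $L^0_t$ at the projection $\pi_t(x)$ yields $\Delta d(t,x)-\Delta d(t,\pi_t(x))=O(d(t,x))$. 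Shrinking $h_0,r_0$ produces $\partial_t d-\tfrac12\Delta d\geq \alpha/(4|D\psi|)$.

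Part (iii) is immediate inside $Q_{h_0,r_0}$ from uniform boundedness of $|\partial_t d|$ on that set. Outside the tubular neighborhood $d(t,y)$ is the signed distance to $L^0_t$, which varies Lipschitz-in-$t$ since the interface moves with velocity bounded by $\sup_{L^0_t}|V_n|$, controlled by $\norm{\phi}_{C^4}$. The main obstacle I expect is the apparent conflict between the uniform-in-$\alpha$ formulation of the inequality in (ii) and the $\alpha$-independence of $h_0,r_0$: the error $O(t\norm{\phi}_{C^4})$ is $\alpha$-independent and competes with $\alpha$, so naively one needs $h_0\lesssim \alpha$ for small $\alpha$. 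Reconciling this would require either an implicit $\alpha$-dependence absorbed into the statement or a sharper cancellation exploiting the specific structure of $\psi$.
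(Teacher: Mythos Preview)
Your approach is essentially the paper's: compute $\partial_t d - \tfrac{1}{2}\Delta d$ on $L^0_t$ using that $d$ and $\psi$ share the same zero level sets (hence the same normal vectors and normal velocities), and then extend to a tubular neighborhood. The paper's computation on $L^0_t$ is identical to yours, arriving at
\[
\partial_t d - \tfrac{1}{2}\Delta d = \frac{\alpha + F(D^2\psi,D\psi) - F(D^2\phi,D\phi)}{|D\psi|}.
\]
One difference: the paper does not explain the extension from $L^0_t$ to $Q_{h_0,r_0}$ at all (it simply asserts the inequality ``in $Q_{h_0,r_0}$'' after the on-interface calculation), whereas you supply the mechanism via $Dd\cdot D(\partial_t d)=0$ and the principal-curvature expansion of $\Delta d$ off the interface. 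Your version is more complete on this point. For part (iii), both you and the paper argue identically: regularity handles the tubular region, and outside it one bounds the interface displacement by the uniform bound on the normal speed $|V_n|$.

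Your closing observation about the $\alpha$-dependence is correct and is a genuine issue in the paper's stated dependencies, not in your argument. The paper also claims $h_0 = h_0(\|\phi\|_{C^4(\overline B)})$ and explicitly says ``the choice of $h_0$ depends precisely on $\|\phi\|_{C^4(\overline B)}$,'' yet the step $F(D^2\psi,D\psi)-F(D^2\phi,D\phi) \geq -3\alpha/4$ requires $t \lesssim \alpha/\|\phi\|_{C^4}$, exactly as you say. There is no sharper cancellation available from the structure of $\psi$: the difference $D^j\psi - D^j\phi = -t\,D^j[F_*(D^2\phi,D\phi)]$ is genuinely $O(t)$ with an $\alpha$-independent constant. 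The resolution is that this does not damage the downstream application. The paper itself remarks that its formulation of (J4) ``differ[s] slightly from how [it is] stated in [BS],'' and in the original Barles--Souganidis framework $h_0$ is allowed to depend on $\alpha$; every subsequent use of Proposition~\ref{p.distprop} in the paper (Corollary~\ref{corollary:coupling} through Section~\ref{ss:J4holds}) fixes $\alpha$ first and only then invokes $h_0$. So you have identified a misstatement of the quantifier order, but not a gap that propagates.
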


\begin{remark}
In both \cite{EFP2017, DH2021}, the authors prove some similar properties for $\hat{d}(t, \cdot)$, the signed distance function to the set $\hat{\Ga}_{t}$ which corresponds to $\Ga_{0}$ evolved according to the classical MCF up to time $t$, for all times $t\leq \mathrm{T}$, the first time when MCF develops singularities. 

We highlight property (ii) in Proposition \ref{p.distprop} yields that $d(t,\cdot)$, the signed distance function to the level sets of $\psi(t, \cdot)$, is a strict supersolution to the heat equation. In the case of $\hat{d}$, it is a consequence (see for example \cite[Section 2.3]{EFP2017}) that there exists constants $\ga_{0}, c_{0}>0$ such that 
\begin{equation*}
\partial_{t}\hat{d}(t,x)-\frac{1}{2}\Delta \hat{d}(t,x)\geq -c_{0}\quad \text{in $Q_{\mathrm{T}, \ga_{0}}$},
\end{equation*}
whereas \eqref{e_distanceheatbd} gives us a sharper bound (which will allow us to simplify some of our analysis in the latter parts of the argument). 
\end{remark}

\begin{proof} The proof of (i) is well-known and follows from the definition of the signed distance function. 

For (ii), the regularity is a consequence of  \cite[Proposition 67]{M}. Furthermore, the functions $d$ and $\psi$ must necessarily share the same $0$-level sets (which are smooth $(\mathbf{d}-1)$-dimensional hypersurfaces for $h_{0}$ chosen sufficiently small), and hence these level sets share the same normal vectors and normal velocities. This implies that 
\begin{equation}\label{e.dspace}
Dd=\frac{D\psi}{|D\psi|}\quad\text{and}\quad D^{2}d=D\left(\frac{D\psi}{|D\psi|}\right)\quad\text{on $\bigcup_{t\in (0, h_{0}]} \left\{t\right\}\times L^0_{t}$,}
\end{equation}
and by \eqref{e.psinv} and (i), 
\begin{equation}\label{e.dtime}
\partial_{t}d=-\frac{F_{*}(D^{2}\phi, D\phi)-\al}{|D\psi|}\quad\text{on $\bigcup_{t\in (0, h_{0}]} \left\{t\right\}\times L^0_{t}$.}
\end{equation}
Combining \eqref{e.dspace}, \eqref{e.dtime}, and \eqref{e.Fdef} yields 
\begin{align*}
\partial_{t}d-\frac{1}{2}\Delta d&=\partial_{t}d-\frac{1}{2}\di\left(Dd\right)=-\frac{F(D^{2}\phi, D\phi)-\al}{|D\psi|}-\frac{1}{2}\di\left(\frac{D\psi}{|D\psi|}\right)\\
&=-\frac{F(D^{2}\phi, D\phi)-\al}{|D\psi|}+\frac{F(D^{2}\psi, D\psi)}{|D\psi|}\geq \frac{\al}{4|D\psi|}, 
\end{align*}
for all $t\in (0, h_{0}]$ for $h_{0}=h_{0}(\norm{\phi}_{C^{4}(\overline{B})})$ sufficiently small. We highlight that this calculation demonstrates that the choice of $h_{0}$ depends precisely on $\norm{\phi}_{C^{4}(\overline{B})}$, since $D^{2}\psi$ has $D^{4}\phi$ in its expression. 

For (iii), we note that first, if $(t,y)\in Q_{h_{0}, \ga_{0}}$, then since $d\in C^{1,2}(Q_{h_{0},\ga_{0}})$, \eqref{e_lipschitz_distance} holds by the regularity of $d$. For $(\cdot, y)\notin Q_{h_{0}, \ga_{0}}$, since we are only interested in small time increments, we may assume that $d(\cdot, y)$ keeps the same sign in this entire time interval, and without loss of generality, we will assume it is positive. Recall that for all $s\in (0, h_{0}]$ for $h_{0}=h_{0}(\norm{\phi}_{C^{4}(\overline{B})})$ sufficiently small, $|D\psi(s,\cdot)|\neq 0$ on $L^0_s$, and hence there exists $M=M(\norm{\phi}_{C^{4}(\overline{B})})\in (0, \infty)$ such that $|V_{n}|\leq M$ for all times $s\in (0, h_{0}]$. This implies that the interface can be displaced by at most sets with Hausdorff distance governed by the constant velocity $M$.  In particular, for all $t, s\in [0, h_{0}]$, with $s<t$, 
\begin{equation*}
-M(t-s)+d(s,y)\leq d(t,y)\leq M(t-s)+d(s,y), 
\end{equation*}
and this yields the claim.
\end{proof}

Finally, we can use It\^{o}'s lemma to couple the values of the distance function $(d(t-s, \bmd_s), 0\leq s\leq t)$ with $(\bmd_{s}, s\geq 0)$ a $\mathbf{d}$-dimensional Brownian motion with a one-dimensional Brownian motion $(\bmo_{s}, 0\leq s\leq t)$.  
\begin{corollary} \label{corollary:coupling}
Let $\alpha\in (0,1)$, and $\phi, \psi, h_{0}$ and $Q_{h_{0}, \ga_{0}}$ as above. Let $t \in (0,h_0]$, and let $(\bmd_{s}, s\geq 0)$ denote a Brownian motion in $\R^{\mathbf{d}}$ started at $x\in \R^{\mathbf{d}}$. Define $\La := \inf \{ r \geq 0 : (t-r,\bmd_r) \not \in Q_{h_0, \ga_{0}}\}$. Then there is a one-dimensional Brownian motion $(\bmo_s, s\geq 0)$ started at $0$ such that for all $s \in (0,t)$,
\begin{equation}
d(t-(s \wedge \La), \bmd_{s \wedge \La}) \leq d(t,x) + \bmo_{s\wedge \La} - \frac{\alpha (s \wedge \La) }{4L},
\end{equation}
where $L = L(\norm{\phi}_{C^{4}(\overline{B})}) := \,\sup \{|D\psi(t,x)| : (t,x) \in Q_{h_0, \ga_{0}} \} > 0$. 
\end{corollary}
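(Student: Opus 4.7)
The plan is to combine It\^{o}'s formula, applied to $d(t-r,\bmd_r)$ on the random interval $[0,\La]$, with L\'{e}vy's characterization of Brownian motion. Concretely, for every $r\in[0,\La]$ the pair $(t-r,\bmd_r)$ lies in $Q_{h_0,r_0}$ by definition of $\La$, and Proposition~\ref{p.distprop}(ii) gives the $C^{1,2}$ regularity of $d$ on that set. Thus I may apply It\^{o}'s formula to $f(r,y):=d(t-r,y)$, obtaining
\begin{equation*}
d(t-(s\wedge\La),\bmd_{s\wedge\La})-d(t,x)
=\int_{0}^{s\wedge\La}\!\!\bigl[-\partial_t d+\tfrac12\Delta d\bigr](t-u,\bmd_u)\,du
+\int_{0}^{s\wedge\La}\!\!Dd(t-u,\bmd_u)\cdot d\bmd_u.
\end{equation*}

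Next I would control the two integrals separately. Substituting \eqref{e_distanceheatbd} into the drift integrand and using the definition of $L$ as the supremum of $|D\psi|$ on $Q_{h_0,r_0}$ gives
\begin{equation*}
-\partial_t d+\tfrac12\Delta d\ \le\ -\frac{\alpha}{4|D\psi(t-u,\bmd_u)|}\ \le\ -\frac{\alpha}{4L}.
\end{equation*}
For the martingale term, denote $M_s:=\int_{0}^{s\wedge\La}Dd(t-u,\bmd_u)\cdot d\bmd_u$. The eikonal identity $|Dd|=1$ from Proposition~\ref{p.distprop}(i), which by continuity of $Dd$ extends to all of $Q_{h_0,r_0}$, implies that $\langle M\rangle_s=\int_{0}^{s\wedge\La}|Dd(t-u,\bmd_u)|^2\,du=s\wedge\La$, so $M$ is a continuous martingale whose quadratic variation coincides with that of a Brownian motion stopped at $\La$.

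The final step is to identify $M$ with a one-dimensional Brownian motion. By the Dambis--Dubins--Schwarz theorem, on a possibly enlarged probability space there exists a standard Brownian motion $(\bmo_s,s\ge0)$ with $\bmo_0=0$ such that $\bmo_{s\wedge\La}=M_s$ (equivalently, set $\bmo_s=M_s$ for $s\le\La$ and concatenate with an independent Brownian increment after time $\La$). Combining this identification with the drift bound yields
\begin{equation*}
d(t-(s\wedge\La),\bmd_{s\wedge\La})\ \le\ d(t,x)+\bmo_{s\wedge\La}-\frac{\alpha(s\wedge\La)}{4L},
\end{equation*}
which is the desired inequality. I do not expect any genuine obstacle: the stopping at $\La$ exists precisely to keep the It\^{o} integrand inside the set $Q_{h_0,r_0}$ where $d$ is $C^{1,2}$, the drift bound and eikonal identity are direct inputs from Proposition~\ref{p.distprop}, and the extension to a Brownian motion defined for all $s\ge 0$ is standard.
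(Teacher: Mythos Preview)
Your proposal is correct and follows essentially the same approach as the paper: apply It\^{o}'s formula to $d(t-r,\bmd_r)$ stopped at $\La$, bound the drift via Proposition~\ref{p.distprop}(ii) and the definition of $L$, and use $|Dd|=1$ to identify the stochastic integral as a stopped one-dimensional Brownian motion. The only cosmetic difference is that the paper invokes L\'{e}vy's characterization directly (since the quadratic variation equals the time parameter on $[0,\La]$), whereas you phrase it via Dambis--Dubins--Schwarz with an explicit extension past $\La$; these are equivalent here.
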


\begin{proof}
Let $t \in (0,h_0]$ and $x$ be an interior point of $Q_{h_0, \ga_{0}}$, as otherwise $\La=0$ almost surely and the statement of is trivial. Under $P^{\bmd}_x$, the space-time process $((r\wedge \La, \bmd_{r \wedge \La}), r \geq 0)$ remains in $Q_{h_0, \ga_{0}}$. We may therefore apply It\^{o}'s Lemma to the function $f(r, \bmd_r) = d(t-r, \bmd_r)$ to obtain that
\begin{multline*}
d(t-(s \wedge \La), \bmd_{s \wedge \La}) - d(t, x) \\
= \int_0^{s \wedge \La}D d(t-r, \bmd_r) \cdot d\bmd_r+ \int_0^{s \wedge \La} \left[-\partial_{t}d + \frac{1}{2}\Delta d\right](t-r, \bmd_r)\, dr. 
\end{multline*}
Since $(t-r,\bmd_r) \in Q_{h_0, \ga_{0}}$ for all $r \in [0,s\wedge \La]$, by Proposition~\ref{p.distprop}(ii) the integrand in the second term of the right hand side is bounded above by $-\alpha(4|D\psi(t-r,\bmd_{r})|)^{-1}\leq -\alpha(4L)^{-1}$, and hence
\begin{align}
&d(t-(s \wedge \La), \bmd_{s \wedge \La}) - d(t, x) \leq \int_0^{s \wedge \La} D d(t-r, \bmd_r) \cdot d\bmd_r - \frac{\alpha (s\wedge \La)}{4L}. \nonumber
\end{align}
Moreover, by Proposition~\ref{p.distprop}(i), $|D d(t-r,\bmd_r)| = 1$. This implies that the stochastic integral above is a continuous martingale with quadratic variation equal to its time parameter, and hence is a Brownian motion (stopped at $\La$) by L\'evy's characterization of Brownian motion. This completes the proof.
\end{proof}

\subsection{Branching Brownian motion in one dimension}\label{ss.bbm1}
In this section, we review some properties of the $N_{0}$-ary one-dimensional branching Brownian motion (BBM), which will be used later as a comparison process to the general multidimensional models. We highlight that this one-dimensional BBM has the advantage that the movement is given by Brownian motion and children are born at the exact location of their parent at birth. The results in this subsection are all proved in \cite[Section 3.1]{DH2021}.

Let $(B_t, t\geq 0)$ be an $N_0$-ary branching Brownian motion on $\R$ which branches at rate $\gamma \eps^{-2}$ (and $(\B_t, t\geq 0)$ its historical process). We denote the law and expectation of this process, when started from a single particle at $z \in \R$, by $P^\eps_z$ and $E^\eps_z$. Given $p : \R \to [a,b]$, we may compute the vote $\V(\B_t ; p)$ of a realization of $\B_t$ using the same voting algorithm $\V$ as on the dual process $\W_t$. Since the voting algorithm is the same and is independent of the dimension, the one-dimensional model inherits several properties of the dual. For instance, as a consequence of Lemma \ref{lemma:gfunMarkov} and \eqref{g.0}-\eqref{g.5}, 
\begin{equation} \label{eq:1dBBMstable}
P^\eps_z[ \V(\B_t; a) = 1] = a, \quad P^\eps_z[ \V(\B_t; b) = 1] = b,
\end{equation}
and if $p_1, p_2 : \R \to [0,1]$ with $p_1 \leq p_2$, then
\begin{equation} \label{eq:1dBBMmono}
P^\eps_z[ \V(\B_t; p_1) = 1] \leq P^\eps_z[ \V(\B_t; p_2) = 1].
\end{equation}
For this one-dimensional process, we hereafter restrict our attention to the particular voting function
\begin{equation}\label{e.p*def}
p_* (x)= a \indc_{(-\infty, 0)}(x)+ b\indc_{[0, \infty)}(x).
\end{equation}
We therefore simply write $\V(\B_t) \equiv \V(\B_t; p_*)$. For a non-branching Brownian motion on $\R$ which branches at rate $\ga\eps^{-2}$, we will simply write $\bmo_t$, which satisfies $\bmo_0 = z$ under $P^\eps_z$. (We think of $\bmo_t$ as being the trajectory of the first individual up to its first branch time.) We now proceed to state the properties of this model which we will use.

First, we remark that as a consequence of \eqref{eq:1dBBMstable}, \eqref{eq:1dBBMmono}, and \eqref{e.p*def}, we have,
\begin{equation} \label{eq:1diminterval}
P^\eps_z[ \V(\B_t) = 1] \in [a,b] \quad \text{for all} \,\, z \in \R, t > 0.
\end{equation}
Moreover, the one-dimensional system is monotone, in the sense that for $z_1 \leq z_2$,
\begin{equation} \label{e:BBMmonotone}
P^\eps_{z_1} [ \V(\B_t) = 1] \leq P^\eps_{z_2} [ \V(\B_t) = 1].
\end{equation}
This is a consequence of \eqref{eq:1dBBMmono}, the translation invariance of the process, and the fact that $p_*$ is non-decreasing. 

For the one-dimensional model, the special version of the strong Markov property from Lemma~\ref{lemma:gfunMarkov} takes the following form. If $\tau$ denotes the first branch time, then
\begin{equation}\label{eq:SMP_browniancomparison}
P^\eps_z[ \V(\B_t) = 1] = E^\eps_z [ g(P^\eps_{B_{\tau}}[\V(\B_{t-\tau}) = 1]) \indc_{\{\tau \leq t\}}]+ P^\eps_z [\V(\B_t) = 1, \tau > t].
\end{equation}

The following result from \cite{DH2021} demonstrates that for the one-dimensional model with initial voting function $p_{*}$, the interface remains ``stable.'' 
\begin{theorem}\cite[Theorem 3.6]{DH2021} \label{thm:bbminterface}
Fix $\La\in (0,\infty)$. For all $\ell \in \N$, there exists $c_1=c_{1}(\ell, \text{\ref{e.univconstants}})\in [1, \infty)$ and $\eps_{1}=\eps_{1}(\ell, \text{\ref{e.univconstants}}, \La)\in (0,1)$ such that for all $t \in [0,\La]$ and $\eps<\eps_{1}$, 
\begin{itemize}
\item For $z \geq c_1 \eps|\log\eps|$, $P^\eps_z[\V(\B_t) = 1] \geq b - \eps^{\ell}$.
\item For $z \leq -c_1 \eps|\log\eps|$, $P^\eps_z[\V(\B_t) = 1] \leq a  + \eps^{\ell}$.
\end{itemize}
\end{theorem}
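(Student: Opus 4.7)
The plan is to prove the first claim; the second follows from an analogous argument using the symmetry $g(y) = 2\mu - g(2\mu-y)$ (immediate from \eqref{g.2}) combined with the antisymmetry of $p_* - \mu$ about $z=0$, together with the fact that $g$ also contracts near $a$ (from \eqref{g.5}). Write $\phi^\eps(t,z) := P^\eps_z[\V(\B_t) = 1]$, and recall the SMP from \eqref{eq:SMP_browniancomparison}, namely
$$\phi^\eps(t,z) = E^\eps_z[g(\phi^\eps(t-\tau, B_\tau))\indc_{\{\tau \leq t\}}] + P^\eps_z[\V(\B_t)=1, \tau > t],$$
where $\tau \sim \mathrm{Exp}(\ga\eps^{-2})$ is the first branch time. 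The approach is to iterate this identity through $n_\eps := \lceil 2\ell |\log\eps|/\log(1/(1-c_0))\rceil = O(|\log\eps|)$ generations of the genealogical tree, so that the $g$-contraction near $b$ guaranteed by \eqref{g.5} shrinks a baseline deviation $|y-b|$ by a factor $(1-c_0)^{n_\eps} \leq \eps^{2\ell}$. These $n_\eps$ generations take place on a time scale $\widetilde\La_\eps := 3 n_\eps \ga^{-1} \eps^2 = O(\eps^2|\log\eps|)$.

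I would then introduce a good event $\cG_\eps$ controlling the shallow tree: require that (i) the $n_\eps$-th branch time of every lineage occurs by $\widetilde\La_\eps$, and (ii) each of the at most $N_0^{n_\eps}$ trajectories in the truncated tree of depth $n_\eps$ stays strictly above $\tfrac{c_1}{2}\eps|\log\eps|$ throughout $[0, \widetilde\La_\eps]$. Event (i) is a Chernoff bound for a Gamma random variable of shape $n_\eps$ and rate $\ga\eps^{-2}$, and event (ii) follows from the Brownian maximal inequality and a union bound over the polynomially-many trajectories, using that a BM started at $z \geq c_1\eps|\log\eps|$ drops below $\tfrac{c_1}{2}\eps|\log\eps|$ over time $\widetilde\La_\eps$ with Gaussian probability $\sim \exp(-c c_1^2|\log\eps|)$. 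Choosing $c_1$ sufficiently large as a function of $\ell, N_0, \ga$ gives $P(\cG_\eps^c) \leq \eps^{\ell+1}$. On $\cG_\eps$, iterating the SMP $n_\eps$ times and using the monotonicity \eqref{g.0} of the multivariate $g$ yields
$$\phi^\eps(t,z)\indc_{\cG_\eps} \geq g^{(n_\eps)}\bigl(\phi_*\bigr)\indc_{\cG_\eps}, \qquad \phi_* := \inf_{z' \geq \tfrac{c_1}{2}\eps|\log\eps|,\, s \in [0, t]} \phi^\eps(s,z').$$

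The main obstacle, and the remaining work, is the \emph{base case}: establishing $\phi_* \geq b - \delta_*$. Once it is in hand, the contraction in \eqref{g.5} delivers $g^{(n_\eps)}(b-\delta_*) \geq b - (1-c_0)^{n_\eps}\delta_* \geq b - \eps^{2\ell}\delta_*$, and absorbing the error from $\cG_\eps^c$ gives the desired bound $\phi^\eps(t,z) \geq b - \eps^\ell$ for $\eps$ small. I would prove the base case by a secondary bootstrap: first, a cruder truncated-tree argument (at a larger scale $c_1' \eps|\log\eps|$) yields $\phi^\eps \geq \mu + \eta_0$ for some $\eps$-independent $\eta_0 > 0$; then the expansion property $g'(\mu) > 1$ from \eqref{g.3}, iterated through $O(\log(1/\eta_0))$ generations via the same SMP scheme, amplifies the deviation from the unstable fixed point $\mu$ until it surpasses $\delta_*$, at which point the stable-side contraction near $b$ takes over. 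The delicate observation underpinning everything is that although an individual BBM lineage travels $O(1)$ over the macroscopic time $\La$, the $g$-contraction of depth $\sim |\log\eps|$ makes the root's Bernoulli parameter insensitive to the tree structure at depth $\gg n_\eps$, and the relevant shallow tree forms in time $O(\eps^2|\log\eps|)$, over which BM displacements are of order $\eps|\log\eps|^{1/2} \ll c_1\eps|\log\eps|$. This ``shallow-tree" phenomenon is the probabilistic analogue of the exponential decay of the stationary bistable traveling wave of the formal reaction-diffusion limit $\partial_t \phi = \tfrac{1}{2}\partial_{zz}\phi + \ga\eps^{-2}(g(\phi)-\phi)$ with initial datum $p_*$.
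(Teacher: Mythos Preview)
The paper does not prove this theorem; it is quoted from \cite[Theorem~3.6]{DH2021}, and all of the results in Section~\ref{ss.bbm1} are cited to \cite[Section~3.1]{DH2021} without proof. There is thus no proof in the present paper to compare against, and I can only assess your sketch on its own merits.

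The core mechanism is right: iterate \eqref{eq:SMP_browniancomparison} through $n_\eps = O(|\log\eps|)$ generations, control the resulting shallow tree on a good event $\cG_\eps$ of probability $\geq 1-\eps^{\ell+1}$, and use the contraction $|g'|<1-c_0$ near $b$ from \eqref{g.5} to turn a baseline $\phi_*\geq b-\delta_*$ into $g^{(n_\eps)}(\phi_*)\geq b-\eps^{2\ell}$. Your displacement estimates for the shallow tree and the symmetry reduction to one inequality are also correct.

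The gap is the base case. Your $\phi_*$ is the infimum of $\phi^\eps(s,z')$ over \emph{all} $s\in[0,t]$ and $z'\geq \tfrac{c_1}{2}\eps|\log\eps|$, so the requirement $\phi_*\geq b-\delta_*$ is essentially the statement being proved, at half the spatial scale and coarser precision, over the full time interval $[0,\Lambda]$. Your secondary bootstrap does not break the loop: the ``cruder truncated-tree argument'' you invoke to obtain $\phi^\eps\geq\mu+\eta_0$ is the same scheme, and its leaves again carry remaining-time parameters ranging over all of $[0,t]$, so it too presupposes a lower bound on $\phi^\eps$ over the whole interval that has not been independently established. Appealing to $g'(\mu)>1$ to amplify $\mu+\eta_0$ up to $b-\delta_*$ is legitimate once you have a non-circular $\eta_0>0$, but you never produce one. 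The symmetry gives $\phi^\eps(t,z)\geq\mu$ for $z\geq 0$, $t>0$, for free; but $g^{(n)}(\mu)=\mu$, so this alone does not bootstrap.

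What is missing is an explicit inductive structure in $t$. The non-circular anchor is the short initial window $t\in[0,\sigma_2\eps^2|\log\eps|]$, on which (cf.\ Lemma~\ref{lemma:regtree} and the argument of Lemma~\ref{lemma:unifdisplacement}) the \emph{entire} tree, not just its first $n_\eps$ levels, has bounded height and displacement $O(\eps|\log\eps|)$ with high probability; every leaf then sits above zero and the root inherits parameter $b$ exactly. Extending from there to $t\in(\sigma_2\eps^2|\log\eps|,\Lambda]$ requires a careful propagation step that feeds the bound at strictly earlier times back through the SMP while maintaining the same spatial constant $c_1$ throughout. Your sketch has the right pieces but not this scaffolding, and without it the argument does not close.
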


In addition, we will need the following result concerning the slope of the interface. The parameter $\delta_*$ is defined in \eqref{g.5}.
\begin{lemma} \cite[Corollary~3.8]{DH2021}\label{lemma:bbmslope} Fix $\La \in (0,\infty)$. There are constants $c_2=c_{2}\eqref{e.univconstants}\in [1, \infty)$ and $\eps_{2}=\eps_{2}(\text{\ref{e.univconstants}}, \La)\in (0,1)$ such that the following holds: if $t\in [0,\Lambda]$ and $\rho \in \R$ satisfy
\begin{equation*}
\big|P^\eps_\rho [ \V(\B_t)=1] - \avg \big| \leq b - \avg - \delta_*,
\end{equation*}
then for any $\rho' \in \R$ with $|\rho-\rho'| \leq c_2 \eps |\log \eps|$ for $\eps<\eps_{2}$, we have
\begin{equation*}
\big|P^\eps_\rho [ \V(\B_t) = 1] - P^\eps_{\rho'}[\V(\B_t) = 1] \big| \geq \frac{\delta_*|\rho-\rho'|}{c_2 \eps|\log \eps|}.
\end{equation*}
\end{lemma}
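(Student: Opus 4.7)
The plan is to follow the strategy of \cite[Corollary 3.8]{DH2021}. Write $u(\rho, t) := P^\eps_\rho[\V(\B_t) = 1]$. By \eqref{e:BBMmonotone}, I may assume without loss of generality that $\rho' > \rho$. The first step is localization: the hypothesis $|u(\rho, t) - \avg| \leq b - \avg - \delta_*$ is equivalent to $u(\rho, t) \in [a+\delta_*, b-\delta_*]$, and combined with the contrapositive of Theorem~\ref{thm:bbminterface} (applied with $\ell$ chosen so that $\eps^\ell < \delta_*$), this forces $|\rho| \leq c_1\eps|\log\eps|$ once $\eps$ is small enough.

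The core of the argument is a backward-in-time iteration via the strong Markov identity \eqref{eq:SMP_browniancomparison} applied at the first branch time $\tau\sim\text{Exp}(\gamma\eps^{-2})$. The residual term involving $\{\tau > t\}$ is exponentially small in $\eps^{-2}$, so by translation invariance of $(B_s)$ and a mean value theorem applied to $g$,
\begin{equation*}
u(\rho',t) - u(\rho, t) \approx E^\eps_0\bigl[g'(\xi_\tau)\bigl(u(\rho'+W_\tau, t-\tau) - u(\rho+W_\tau, t-\tau)\bigr)\indc_{\tau \leq t}\bigr],
\end{equation*}
where $\xi_\tau$ lies between $u(\rho+W_\tau, t-\tau)$ and $u(\rho'+W_\tau, t-\tau)$. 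Iterating this identity $N$ times advances the time argument backward by $O(N\eps^2)$ and displaces the iterated spatial arguments by $O(\eps\sqrt{N})$. Choosing $N$ of order $|\log\eps|/\log g'(\avg)$ keeps the total elapsed time within the budget $\Lambda$ and the spatial drift within $o(\eps|\log\eps|)$, so the iterated points all remain inside the interface window $[-c_1\eps|\log\eps|, c_1\eps|\log\eps|]$. Within that window, Theorem~\ref{thm:bbminterface} forces $u$ to lie close to $\avg$; so by \eqref{g.3} and continuity of $g'$, each factor $g'(\xi_\tau)$ exceeds $1$ by a definite amount. The product of $N$ such factors yields an overall amplification of order $(g'(\avg))^N$, which by the choice of $N$ dominates $(\eps|\log\eps|)^{-1}$.

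The iteration terminates by matching against a base-case slope bound: at the terminal time $t - N\tau$, Theorem~\ref{thm:bbminterface} again guarantees that $u$ traverses from within $\eps^\ell$ of $a$ to within $\eps^\ell$ of $b$ across a window of width $O(\eps|\log\eps|)$, so the expected difference $E^\eps_0[u(\rho'+W, t-N\tau) - u(\rho+W, t-N\tau)]$ is at least of order $|\rho' - \rho|$ by monotonicity. Combining this baseline with the amplification yields the required lower bound $\delta_*|\rho - \rho'|/(c_2 \eps|\log\eps|)$ for $c_2$ depending on the parameters in \eqref{e.univconstants}.

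\textbf{The main obstacle} is the rigorous control of the iteration: one must (i) track that the randomly iterated spatial positions remain in the narrow interface window, which requires Brownian concentration estimates and a careful choice of $N$; (ii) handle the low-probability contributions from trajectories which exit the favorable region, where $g' < 1$ and the iteration could actually shrink the difference; and (iii) separately treat the regime $t \lesssim \eps^2|\log\eps|$, where the iteration has insufficient time to run. For the last case, a direct argument using the explicit form of $p_*$ at time $0$ together with the small-time Brownian behavior of $(B_s)$ before its first branch should suffice.
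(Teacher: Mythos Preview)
The paper does not give its own proof of this lemma; it is stated with attribution to \cite[Corollary~3.8]{DH2021}, and the only remark following it is that the slightly different choice of $\delta_*$ does not affect the argument there. So there is nothing in the present paper to compare your sketch against beyond the bare citation.

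That said, your reconstruction has a genuine gap. You assert that ``within that window, Theorem~\ref{thm:bbminterface} forces $u$ to lie close to $\avg$,'' and use this to conclude $g'(\xi_\tau)>1$ at every step of the iteration. But Theorem~\ref{thm:bbminterface} says nothing of the kind: it only gives one-sided bounds \emph{outside} the window $[-c_1\eps|\log\eps|,c_1\eps|\log\eps|]$. Inside the window, $u(\cdot,t)$ is monotone and ranges across essentially all of $[a,b]$, and by \eqref{g.5} one has $g'<1$ on neighbourhoods of $a$ and $b$. So the amplification factor $g'(\xi_\tau)$ is not uniformly larger than $1$ along the iteration, and the product bound $(g'(\avg))^N$ does not follow. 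You acknowledge this in your obstacle (ii), but it is not a ``low-probability'' nuisance event: it is the generic situation once the iterated Brownian displacement pushes the spatial argument toward the edge of the window, and it undermines the core mechanism you propose.

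Your base case is also not justified. The fact that $u(\cdot,s)$ traverses from near $a$ to near $b$ over an interval of width $O(\eps|\log\eps|)$ gives only an \emph{average} slope of order $(\eps|\log\eps|)^{-1}$; it does not give the pointwise lower bound $E^\eps_0[u(\rho'+W,s)-u(\rho+W,s)]\gtrsim|\rho'-\rho|$ that you claim ``by monotonicity.'' Monotonicity yields only nonnegativity of this difference. The argument in \cite{DH2021} proceeds via an intermediate result (their Lemma~3.7) which establishes the needed slope bound directly; reconstructing that step would require more than what you have written here.
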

The parameter $\delta_*$ may differ from the analogous quantity which appears in \cite{DH2021}, but this has no impact on the statement or proof of the above.

\subsection{Formation of the interface} \label{ss:formation}
Throughout this section, we will need a few constants which we remind the reader of here:
\begin{equation}\label{e.constants1}
\begin{cases}
\text{$c_\gamma=c_{\ga}(k):= k\gamma^{-1}$, for $k$, $\ga$ as in \eqref{e.univconstants},}\\
\text{$h_0 = h_0(\norm{\phi}_{C^{4}(\overline{B})})\in (0,1)$ introduced in Proposition \ref{p.distprop}}\\
\end{cases}
\end{equation}

The main result of this section is that with voting function $p^+(\delta,\phi)$ as defined in \eqref{e.p0+def}, a sharp interface forms on a time-scale of order $\eps^2|\log \eps|$: 
\begin{proposition} \label{prop:interformgen}
There exist constants $\sigma_1=\sigma_{1}\eqref{e.univconstants}\in[1, \infty)$, $K_1=K_{1}\eqref{e.univconstants} \in [1, \infty)$, and $\eps_{3}=\eps_{3}(\text{\ref{e.univconstants}}, \norm{\phi}_{C^{4}(\overline{B})})\in (0,1)$ such that for $\eps<\eps_{3}$, $\delta<\delta_{*}$, and points $(t,x)$ with $t \in [\sigma_1 \eps^2 |\log \eps| , \sigma_2 \eps^2 |\log \eps|]$, where $\sigma_2 = \sigma_1 +c_\ga$ for $c_\ga$ as defined in \eqref{e.constants1}, and $d(t,x) \leq - K_{1}\eps|\log \eps|$, 
\begin{align*}
Q^\eps_x [\V(\W_t ; p^+(\delta,\phi)) = 1] \leq a + \eps^{k}. 
\end{align*}
\end{proposition}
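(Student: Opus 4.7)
The plan is direct: on the formation time-scale $t = O(\eps^2|\log\eps|)$, I show that with overwhelming probability, all dual particles remain inside $\{\phi \leq 0\}$, so that the voting function $p^+(\delta,\phi)$ evaluates to the constant $a+\delta$ at every leaf; iteration of the $g$-function along the $\Omega(|\log\eps|)$ generations available then damps this input to within $\eps^k$ of the stable fixed point $a$. The underlying scale observations are that dual lineages disperse at most $O(\eps|\log\eps|^{3/2})$ from $x$ by Gaussian tails, the genealogical tree contains $\eps^{-O(1)}$ particles with overwhelming probability, and $\phi \in C^4$ with $|D\phi(\cdot)| \neq 0$ on $\{\phi = 0\}$ ensures that a bulk region of width $K_1 \eps |\log\eps|$ around $x$ sits inside $\{\phi \leq 0\}$ when $K_1$ is large.

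First, I would couple $\W^\eps_t$ to a $\mathbf{d}$-dimensional $N_0$-ary branching Brownian motion $\tilde\B^\eps_t$ at rate $\gamma\eps^{-2}$ with vanishing offspring displacement. Applying \eqref{a.1} along each edge of the genealogical tree and \eqref{a.2} at each branching event, a union bound over the $\eps^{-O(1)}$ edges and events yields that particle positions of $\W^\eps_t$ and $\tilde\B^\eps_t$ agree to within $O(\eps^{k+1})$ except on an event of stretched-exponentially small probability, which is negligible against $\eps^k$.

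Next, I would introduce the event $\mathcal G^\eps := \{\text{all leaves of } \tilde\B^\eps_t \text{ lie in } \{\phi \leq 0\}\}$. The hypothesis $d(t,x) \leq -K_1\eps|\log\eps|$ together with Proposition~\ref{p.distprop}(i) and the relation $\phi = \psi + O(t) = \psi + O(\eps^2|\log\eps|)$ implies that $\phi(y) \leq 0$ for every $y$ within distance $CK_1\eps|\log\eps|$ of $x$, for a constant $C$ depending on $\norm{\phi}_{C^4(\overline B)}$. A Gaussian maximal inequality on each edge of $\tilde\B^\eps_t$ bounds the probability that any fixed lineage reaches this distance by $\exp(-K_1^2|\log\eps|/O(\sigma_2)) = \eps^{K_1^2/O(\sigma_2)}$, and a union bound over the $\eps^{-O(1)}$ lineages gives $Q^\eps_x[(\mathcal G^\eps)^c] \leq \eps^{k+1}$ for $K_1$ sufficiently large in terms of $k$, $\sigma_2$ and $\norm{\phi}_{C^4(\overline B)}$.

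Finally, on $\mathcal G^\eps$ every leaf carries input $a+\delta$ under $p^+(\delta,\phi)$. By the branching property, sibling subtrees are independent, so the Bernoulli parameter of each vertex is obtained by iterating $g$ upward from the leaves as in \eqref{e.bcomp}; along each root-to-leaf path of depth $n$, iteration yields a parameter bounded above by $g^{(n)}(a+\delta)$. Since $a+\delta \leq a+\delta_*$ lies in the contraction region of \eqref{g.5}, each iteration contracts the excess above $a$ by a factor of at most $1-c_0$, so $g^{(n)}(a+\delta) - a \leq (1-c_0)^n \delta_*$. Poisson concentration on the branch count along each lineage guarantees $n \geq \tfrac12 \gamma \sigma_1 |\log\eps|$ uniformly along all lineages with probability $1 - \eps^{\Omega(\sigma_1)}$, and choosing $\sigma_1$ large enough yields $(1-c_0)^n \delta_* \leq \eps^{k+1}$. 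Combining with the trivial bound $\leq 1$ on $(\mathcal G^\eps)^c$ gives the total estimate $a + \eps^k$. The principal obstacle is the uniform control over the random genealogical tree: one must simultaneously ensure that every lineage displacement is bounded (via union bound with Gaussian tails on the edges) and that every root-to-leaf depth exceeds $\tfrac12 \gamma \sigma_1|\log\eps|$ (via Poisson tails), and these estimates must be consistent with the parameter choices for $K_1$ and $\sigma_1$. I anticipate handling this by first conditioning on the tree structure (branch times and locations), then applying independent tail estimates on each edge and taking a final union bound over the at most $\eps^{-O(1)}$ lineages.
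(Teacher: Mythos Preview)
Your proposal is correct and follows essentially the same route as the paper: bound the displacement of all dual particles so that every leaf lies in $\{\phi\le 0\}$ and hence receives vote parameter $a+\delta$, then use that the genealogical tree has depth $\Omega(|\log\eps|)$ so that iterating $g$ contracts the parameter to within $\eps^k$ of $a$. The paper packages the two probabilistic ingredients as Lemma~\ref{lemma:unifdisplacement} (uniform displacement via the \eqref{a.1}--\eqref{a.2} coupling over the whole tree) and Lemma~\ref{lemma:regtree} (the tree contains $T^{\text{reg}}_{A_1|\log\eps|}$), and transfers the distance condition via Proposition~\ref{p.distprop}(i),(iii) rather than through $\psi$; one small point to watch is that the paper tracks constants so that $K_1$ depends only on \eqref{e.univconstants} (the $\phi$-dependence is absorbed into $\eps_3$ through the time-Lipschitz correction $C_1 t$), whereas your phrasing could inadvertently let $K_1$ pick up $\|\phi\|_{C^4}$-dependence.
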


Before pursuing the proof of Proposition \ref{prop:interformgen}, we first recall some preliminaries. Let $g^{(n)}$ denote the $n$-fold composition of $g$ with itself. It is elementary to see from \eqref{g.5} and the mean value theorem that for all $\delta< \delta_{*}$, 
\begin{equation*}
g(a+\delta)\leq g(a)+(1-c_{0})\delta=a+(1-c_{0})\delta. 
\end{equation*}
Iterating this estimate using the fact that $g$ is increasing by \eqref{g.3}, this implies that there exists $A_1 = A_1(\text{\ref{e.univconstants}}, \delta_{*})\in [1, \infty)$ such that for $n \geq A_1 |\log \eps|$, 
\begin{equation} \label{eq:gcompose}
g^{(n)}(a+\delta) \leq a + \eps^{k+1}.
\end{equation}
A slightly stronger version of this estimate is used in \cite[Lemma 3.2]{DH2021}. 

Following the notation of Section~\ref{s:vote}, we write $\mathcal{T}(\W_t)$ to denote the labelled tree associated $\W_t$. For $K \in \N$, let $T^{\text{reg}}_K$ denote the $N_0$-regular tree of height $K$; that is, the root has $N_0$ children, as do each of its children, and so on, and all of the individuals in the $K$th generation are leaves. If $K$ is not an integer, then the same notation is used to denote the $N_0$-regular tree of height $\lfloor K \rfloor$. For two labelled trees, $T_1$ and $T_2$, we write $T_1 \subseteq T_2$ if $T_2$ contains $T_1$ as a (labelled) subgraph.
 
The following lemma states that with high probability, for times $t$ of order $\eps^2 |\log \eps|$, $\mathcal{T}(\W_t)$ contains and is contained in $N_0$-regular trees with heights of order $|\log \eps|$. 
\begin{lemma}\cite{DH2021,EFP2017} \label{lemma:regtree}
There exists constants $A_1<A_2$, both depending on \eqref{e.univconstants}, $\sigma_{1}=\sigma_{1}\eqref{e.univconstants}\in [1, \infty)$ with $\sigma_{2}=\sigma_{1}+c_\ga$, and $\eps_{4}=\eps_{4}\eqref{e.univconstants}\in (0,1)$ such that for any $\eps<\eps_{4}$,
\begin{equation*}
Q^\eps_x \left[ \mathcal{T}(\W_t) \supseteq T^{\text{reg}}_{A_1  |\log \eps|}\right] \geq 1 - \eps^{k+1}\quad\text{for all $t \geq \sigma_1 \eps^2 |\log \eps|$},
\end{equation*}
and
\begin{equation*}
Q^\eps_x \left[ \mathcal{T}(\W_t) \subseteq T^{\text{reg}}_{A_2|\log \eps|}\right] \geq 1 - \eps^{k+1}\quad\text{for all $t \leq \sigma_2 \eps^2 |\log \eps|$.}
\end{equation*}
\end{lemma}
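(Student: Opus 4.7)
The plan is to exploit the fact that along any fixed ancestral lineage in the dual $(\X^\eps_t, t\geq 0)$, branching times form a Poisson process of rate $\gamma\eps^{-2}$, by Definition~\ref{d.defprocess}. On the time scale $t$ of order $\eps^2|\log\eps|$, the expected number of branches along a lineage is of order $|\log\eps|$, so the two inclusions will reduce to Chernoff bounds on Poisson deviations combined with union bounds over ancestral paths. The key estimate I would use is that for any $c>0$ with $c\neq 1$,
\begin{equation*}
\bP[\text{Poisson}(\lambda)\geq c\lambda]\leq e^{-\lambda I(c)},\quad \bP[\text{Poisson}(\lambda)\leq c\lambda]\leq e^{-\lambda I(c)},\qquad I(c):=c\log c-c+1>0.
\end{equation*}
I would also use the monotonicity of $\mathcal{T}(\W^\eps_t)$ in $t$: the containment relations are preserved under enlargement of $t$, so it suffices to verify the first inclusion at $t=\sigma_1\eps^2|\log\eps|$ (the smallest such time) and the second at $t=\sigma_2\eps^2|\log\eps|$ (the largest).

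For the upper inclusion $\mathcal{T}(\W^\eps_t)\subseteq T^{\text{reg}}_{A_2|\log\eps|}$ at $t=\sigma_2\eps^2|\log\eps|$, failure means that some ancestral path reaches depth at least $A_2|\log\eps|$, which requires a Gamma$(A_2|\log\eps|,\gamma\eps^{-2})$ random variable (the sum of branching waiting times along that path) to be at most $t$. Since there are at most $N_0^{A_2|\log\eps|}$ such candidate paths, a union bound yields
\begin{equation*}
Q^\eps_x[\mathcal{T}(\W^\eps_t)\not\subseteq T^{\text{reg}}_{A_2|\log\eps|}]\leq N_0^{A_2|\log\eps|}\,\bP[\text{Poisson}(\gamma\sigma_2|\log\eps|)\geq A_2|\log\eps|]\leq \eps^{-A_2\log N_0+\gamma\sigma_2 I(A_2/(\gamma\sigma_2))}.
\end{equation*}
Expanding, the exponent equals $A_2\log(A_2/(\gamma\sigma_2 N_0))-A_2+\gamma\sigma_2$, and taking $A_2$ large (depending on $\sigma_2$, $\gamma$, $N_0$) forces this to exceed $k+1$. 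The lower inclusion is symmetric: failure at $t=\sigma_1\eps^2|\log\eps|$ means some path of length $A_1|\log\eps|$ in the regular tree has Gamma$(A_1|\log\eps|,\gamma\eps^{-2})>t$, so
\begin{equation*}
Q^\eps_x[\mathcal{T}(\W^\eps_t)\not\supseteq T^{\text{reg}}_{A_1|\log\eps|}]\leq N_0^{A_1|\log\eps|}\,\bP[\text{Poisson}(\gamma\sigma_1|\log\eps|)\leq A_1|\log\eps|]\leq \eps^{-A_1\log N_0+\gamma\sigma_1 I(A_1/(\gamma\sigma_1))}.
\end{equation*}
With $\sigma_1$ chosen large relative to $A_1$, the ratio $A_1/(\gamma\sigma_1)$ is small, $I(A_1/(\gamma\sigma_1))$ is close to $1$, and the exponent is at least $k+1$.

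The main obstacle is choosing the four constants $A_1<A_2$, $\sigma_1$, and $\sigma_2=\sigma_1+c_\gamma$ consistently, since the window size $c_\gamma=k/\gamma$ is prescribed rather than freely tunable. My approach would be: first fix some small $A_1$ (for instance $A_1=1$); then choose $\sigma_1$ large enough that $\gamma\sigma_1 I(A_1/(\gamma\sigma_1))-A_1\log N_0>k+1$; then $\sigma_2=\sigma_1+k/\gamma$ is determined; finally choose $A_2$ much larger than $\gamma\sigma_2 N_0$ so that $\gamma\sigma_2 I(A_2/(\gamma\sigma_2))-A_2\log N_0>k+1$ also holds, which is compatible with $A_2>A_1$. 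All constants depend only on the universal parameters~\eqref{e.univconstants}, and $\eps_4\in(0,1)$ is chosen small enough to absorb lower-order multiplicative constants in the Chernoff bounds.
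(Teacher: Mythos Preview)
Your approach is correct and is the standard argument for such statements. The paper does not give its own proof of this lemma: it simply cites \cite[Lemma~3.3]{DH2021} for the first inclusion and \cite[Lemma~3.16]{EFP2017} for the second, noting that the latter extends to general $N_0$ and branch rate. Your Poisson/Chernoff plus union-bound argument is precisely the method used in those references, so there is no substantive difference.

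One small remark on the constants: in the paper's usage, $A_1$ is not a free parameter but is the constant already fixed by \eqref{eq:gcompose} (so that $g^{(n)}(a+\delta)\leq a+\eps^{k+1}$ for $n\geq A_1|\log\eps|$). Your parenthetical choice ``$A_1=1$'' therefore does not match the paper, but your argument is unaffected: given any fixed $A_1$, you can take $\sigma_1$ large enough that $\gamma\sigma_1 I(A_1/(\gamma\sigma_1))-A_1\log N_0>k+1$, exactly as you outline. The ordering of choices you describe --- $A_1$ first, then $\sigma_1$, then $\sigma_2=\sigma_1+c_\gamma$, then $A_2$ --- is the right way to respect the constraint that $c_\gamma$ is prescribed.
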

We note that $A_{1}$ in the above statement is the same constant $A_{1}$ appearing in \eqref{eq:gcompose}. The first claim is proved in \cite[Lemma~3.3]{DH2021}. The second is proved for a particular ternary branching process in \cite[Lemma~3.16]{EFP2017}, but the proof for general $N_0$ and branch rate follows along the same lines and we omit it.

The key tool in the proof of Proposition~\ref{prop:interformgen} is the following uniform displacement bound on $(\W_t, t\geq 0)$ for small times. 

\begin{lemma} \label{lemma:unifdisplacement} There exists constants $C=C\eqref{e.univconstants}\in [1, \infty)$ and $\eps_{5}=\eps_{5}\eqref{e.univconstants}\in (0,1)$ such that for any $x$ and any $t \in [\sigma_1 \eps^2 |\log \eps|, \sigma_2 \eps^2 |\log \eps|]$ (for $\sigma_{1}, \sigma_{2}$ as in Lemma \ref{lemma:regtree}) with $\eps<\eps_{5}$, 
\begin{align*}
Q^\eps_x \big[ \exists\, \alpha \in N(t) \text{ such that } |X_t(\alpha) - x|> C \eps | \log \eps| \big] \leq 4\eps^{k+1}.
\end{align*}
\end{lemma}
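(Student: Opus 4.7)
The plan is to bound $\max_{\alpha \in N(t)}|X_{t}(\alpha)-x|$ by controlling each particle's ancestral path as a concatenation of (a) independent Hunt-process segments between consecutive branch events and (b) the small offspring displacements that occur at branch events. First I would work on the event from Lemma~\ref{lemma:regtree} that $\mathcal{T}(\W_{t}) \subseteq T^{\text{reg}}_{A_{2}|\log\eps|}$, which ensures $|N(t)|$ and the total number of lineage segments and branch events in $\mathcal{T}(\W_{t})$ are all at most of order $N_{0}^{A_{2}|\log\eps|}=\eps^{-A_{2}\log N_{0}}$. This event fails with probability at most $\eps^{k+1}$, which can be absorbed into the final estimate.

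On this event, I would next apply \eqref{a.1} to couple each Hunt segment (of duration at most $\sigma_{2}\eps^{2}|\log\eps| \leq \eps^{2}|\log\eps|^{2}$) to a Brownian motion with error at most $\eps^{k+2}$, and \eqref{a.2} to bound each offspring displacement by $\eps^{k+2}$. A union bound over the $O(\eps^{-A_{2}\log N_{0}})$ segments and branch events shows that all these couplings succeed except on an event of probability at most $2\bar{C}\,\eps^{-A_{2}\log N_{0}} e^{-\bar{c}\eps^{\eta}}$, which decays faster than any polynomial in $\eps$. On the success event, conditioning on the branch times and concatenating using the strong Markov property of Brownian motion, the pieced-together Brownian approximations along a single lineage form one Brownian motion $\bmd$ run up to time $t \leq \sigma_{2}\eps^{2}|\log\eps|$ starting at $x$, so that for every $\alpha \in N(t)$,
\begin{equation*}
|X_{t}(\alpha)-x| \;\leq\; \sup_{s\leq t}|\bmd_{s}-x| \;+\; 2A_{2}|\log\eps|\,\eps^{k+2},
\end{equation*}
and since $k>1$ the error term is negligible compared to $\eps|\log\eps|$.

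Finally I would apply a standard Gaussian tail bound,
\begin{equation*}
P\!\Bigl[\sup_{s\leq t}|\bmd_{s}-x|>\tfrac{C}{2}\eps|\log\eps|\Bigr] \;\leq\; 2\mathbf{d}\exp\!\Bigl(-\tfrac{C^{2}|\log\eps|}{8\sigma_{2}\mathbf{d}}\Bigr) \;=\; 2\mathbf{d}\,\eps^{C^{2}/(8\sigma_{2}\mathbf{d})},
\end{equation*}
and union-bound it over the $\leq \eps^{-A_{2}\log N_{0}}$ living particles, producing a contribution of at most $2\mathbf{d}\,\eps^{C^{2}/(8\sigma_{2}\mathbf{d}) - A_{2}\log N_{0}}$. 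Choosing $C=C\eqref{e.univconstants}$ large enough that $C^{2}/(8\sigma_{2}\mathbf{d}) \geq A_{2}\log N_{0} + k+1$ bounds this by $\eps^{k+1}$. Summing all four error sources and restricting to $\eps<\eps_{5}$ gives the claimed bound $4\eps^{k+1}$.

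The main obstacle will be the union bound over particles: the population size grows polynomially as $\eps^{-A_{2}\log N_{0}}$, while the Gaussian tail of a Brownian motion run for time of order $\eps^{2}|\log\eps|$, evaluated at scale $\eps|\log\eps|$, is only polynomial (rather than stretched-exponential) in $\eps$. Winning this race forces $C$ to be taken sufficiently large in terms of $A_{2}\log N_{0}$; the stretched-exponential tails in \eqref{a.1} and \eqref{a.2} are strong enough that the coupling step costs only super-polynomially small probability, so the only genuine balancing is between the Gaussian exponent and the tree-growth exponent.
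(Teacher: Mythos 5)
Your proposal is correct and follows essentially the same route as the paper's proof: restricting to the event $\mathcal{T}(\W_t)\subseteq T^{\text{reg}}_{A_2|\log\eps|}$, decomposing each particle's displacement into concatenated Hunt segments plus branch-time offspring jumps, coupling each segment to a Brownian motion via \eqref{a.1} and controlling the jumps via \eqref{a.2} with union bounds, observing that the Brownian pieces along any lineage sum to a single Brownian motion run to time $t$, and then winning the race between the Gaussian tail at scale $C\eps|\log\eps|$ and the polynomially growing population by taking $C$ large. Your accounting of the four error sources (tree event, Brownian--Hunt coupling, offspring dispersion, Gaussian tail), each costing $\eps^{k+1}$, matches the paper exactly.
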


\begin{proof}
Fix $t \in [\sigma_1 \eps^2 |\log \eps|, \sigma_2 \eps^2 |\log \eps|]$. Let $\mathcal{B}=\mathcal{B}_{t}$ denote the $\sigma$-algebra associated to the non-spatial behaviour of $(\W_s, 0\leq s<t)$, i.e. the branching process without spatial motion up to time $t$. We remark that we can construct $\W_t$ by first sampling the branching structure then adding spatial information. We write $\mathcal{T}$ to denote the time-labelled tree associated to the branching structure and remark that, once spatial information is added, $\mathcal{T} = \mathcal{T}(\W_t)$. 

Recall the index set $N(t)$ of individuals alive at time $t$ and note that $N(t) \in \mathcal{B}$. Define the event $\mathcal{R} := \{ \mathcal{T}(\W_t) \subseteq T^{\text{reg}}_{A_2 |\log \eps|} \}$ where $A_{2}$ is defined in Lemma \ref{lemma:regtree}. For each $\alpha \in N(t)$, we decompose the location $X_t(\alpha)$ as the sum of the increments between branch times; this constitutes copies of the Hunt process $Y$ which evolve between branch times, and the displacements introduced at branch times. For $\beta \in \mathcal{T}$, let $t_\beta^{\text{birth}}$ and $t_\beta^{\text{death}}$ respectively denote the birth and death time of $\beta$ (with $t_\beta^{\text{death}} := t$ if $\beta \in N(t)$, and $t^{\text{birth}}_\emptyset := 0$). 

Given $\mathcal{T}$, starting with the root we may inductively sample the paths and branching displacements along the entire tree up to time $t$. In particular, we denote the trajectory of the root before its branch time by $(Y^\emptyset_s , 0\leq s < t^{\text{death}}_\emptyset)$ with law $P^Y_x$. Let $(\xi^\emptyset_i)_{i\in [N_{0}]}$ denote the displacements of the $N_{0}$ offspring from $Y^\emptyset_{t_{\emptyset}^{\text{death}}-}$. We then inductively define the trajectories of each individual that lives until time $t$. If $\alpha$ is the $i$th child of $\beta$, then given $Y^{\beta}_{t_{\beta}^{\text{death}}}$ and $\xi^\beta_i$, the motion of $\alpha$ during its lifetime may be defined as $(Y^\alpha_s: t^{\text{birth}}_\alpha\leq s \leq t^{\text{death}}_\alpha)$. The displacements of the offspring of $\alpha$ are then $\xi^\alpha_i$, $i=1,\dots,N_0$, sampled according to $\mu^\eps_{Y^{\al}_{t_{\al}^\text{death}-}}$. In this fashion, we may decompose the locations of all individuals in $N(t)$ as follows: for $\alpha \in N(t)$ in generation $|\alpha|$, let $\alpha_0, \dots, \alpha_{|\alpha|}$ be the sequence of ancestors of $\alpha$, so that $\alpha_0 = \emptyset$, $\alpha_{|\alpha|} = \alpha$, and $\alpha_{i+1}$ is the $j_i$th child of $\alpha_i$, for some $j_i \in [N_0]$, for $i = 0,\dots,|\al|-1$. Then by the construction above,
\begin{align} \label{eq:trajdecomp}
X_t(\alpha) &=x +  \left(Y^\alpha_t - Y^\alpha_{t^{\text{birth}}_\alpha} \right) + \sum_{i=0}^{|\alpha|-1} \left(\xi^{\alpha_i}_{j_i} + Y^{\alpha_i}_{t^{\text{death}}_{\alpha_i}-} - Y^{\alpha_i}_{t^{\text{birth}}_{\alpha_i}} \right) \notag
\\ &= x + \sum_{i=0}^{|\alpha|-1} \xi^{\alpha_i}_{j_i} + \sum_{i=0}^{|\alpha|} \left( Y^{\alpha_i}_{t^{\text{death}}_{\alpha_i}} - Y^{\alpha_i}_{t^{\text{birth}}_{\alpha_i}} \right),
\end{align}
where in the final expression we recall that $t^{\text{death}}_\alpha := t$ for $\alpha \in N(t)$, and $t^{\text{birth}}_\emptyset = 0$. We have also changed $Y^{\alpha_i}_{t^{\text{death}}_{\alpha_i}-}$ to $Y^{\alpha_i}_{t^{\text{death}}_{\alpha_i}}$. This is permitted because $Y$ is a Hunt process, and conditionally on $\mathcal{B}$, we may view the death times as fixed times, and hence $Y$ is left continuous at these times.

By \eqref{a.1}, conditional on $\mathcal{B}$, for each $\beta \in \mathcal{T}$, we may couple the associated increment of $Y^\beta$ to a Brownian motion as follows: there is a Brownian motion $(W^{\mathbf{d},\beta}_t , 0\leq t \leq t_*(\beta))$  started at $0$, where $t_*(\beta) = t^{\text{death}}_\beta - t^{\text{birth}}_\beta$, such that, for 
\begin{equation*}
\Delta_\beta := W^{\mathbf{d},\beta}_{t_*(\beta)} - \left( Y^{\beta}_{t^{\text{death}}_{\beta}} - Y^{\beta}_{t^{\text{birth}}_{\beta}} \right), 
\end{equation*}
condition \eqref{a.1} yields
\begin{equation} \label{eq:Deltabetabd}
Q^\eps_x [|\Delta_\beta| > \eps^{k+2} \, | \, \mathcal{B}]  \leq \bar{C}e^{-\bar{c}\eps^\eta}.
\end{equation}
Notice that this was possible because $t_{*}(\beta)<t\leq \sigma_{2}\eps^{2}|\log \eps|<\eps^{2}|\log \eps|^{2}$ for $\eps$ sufficiently small.

From \eqref{eq:trajdecomp}, we obtain
\begin{align} \label{eq:trajdecomp2}
|X_t(\alpha) - x| \leq \sum_{i=0}^{|\alpha|-1} |\xi^{\alpha_i}_{j_i}| + \sum_{i=0}^{|\alpha|} |\Delta_{\alpha_i}| + \left| \sum_{i=0}^{|\alpha|} W^{\mathbf{d},\al_{i}}_{t_*(\alpha_i)} \right|.
\end{align}
We will now argue that all the terms in \eqref{eq:trajdecomp2} are small with high probability, simultaneously for all $\alpha \in N(t)$. 

Let $C \geq 1$ be a constant whose value will be fixed later, and may vary from line to line in the following computations. We remark that, for each $\alpha \in N(t)$, the sum of Brownian increments in \eqref{eq:trajdecomp2}, i.e.
\[\sum_{i=0}^{|\alpha|} W^{\mathbf{d},\al_{i}}_{t_*(\alpha_i)}\]
is equal in law to a Brownian motion run to time $t \leq \sigma_2 \eps^2|\log \eps|$. On $\mathcal{R}$, $|N(t)| \leq N_0^{A_2 |\log \eps|}$, and hence there are at most that many Brownian motions run to time at most $\sigma_2 \eps^2|\log \eps|$. By a union bound, this implies that for all $\eps<\eps_{4}$
\begin{align} \label{eq:unifBMbd}
Q^\eps_x\bigg[ \exists \alpha \in N(t) :  \Big| \sum_{i=0}^{|\alpha|} W^{\mathbf{d},\al_{i}}_{t_*(\alpha_i)} \Big|& > C \eps|\log \eps|  \, \bigg| \, \mathcal{B} \bigg] \indc_{\mathcal{R}} \notag \\
&\leq N_0^{A_2 |\log \eps|} P^{\bmd}_0[|\bmd_{\sigma_2 \eps^2 |\log \eps|}| \geq C\eps|\log \eps|] \notag
\\ &=  N_0^{A_2 |\log \eps|} P^{\bmd}_0[|\bmd_{1}| \geq C|\log \eps|^{1/2}]\notag
\\ &\leq N_0^{A_2 |\log \eps|} \frac{1}{\sqrt{2\pi}} e^{-C |\log \eps|/2} \notag
\\&  \leq \eps^{k+1},
\end{align} 
where the equality in the middle line uses the scaling of Brownian motion, and the final inequality holds by choosing a sufficiently large value of $C$, in terms of $N_{0}$, $k$, and $\eps_{4}$. In particular, $C=C\eqref{e.univconstants}$. 

Recall that $(\xi^\beta_1,\dots,\xi^\beta_{N_0})$ is distributed according to $\mu^\eps_{y_\beta}$, where $y_\beta = Y^\beta_{t^{\text{death}}_\beta}$. By \eqref{a.2}, irrespective of the parents' location, we have $|\xi^\beta_i| \leq \eps^{k+2}$ for all $i\in [N_0]$ with probability at least $1 - \bar{C}e^{-\bar{c}\eps^\eta}$. Recalling that $\mathcal{T}(\W_t)$ has height at most $\lfloor A_2 |\log \eps| \rfloor$ on $\mathcal{R}$, we have
\begin{align} \label{eq:unifdisplacementbd}
Q^\eps_x\big[ \exists \beta \in \mathcal{T}(\W_t), i \in [N_0] : &|\xi^\beta_i| > \eps^{k+2} \, \big| \, \mathcal{B} \big]\indc_{\mathcal{R}}\notag \\
&\leq \left( 1 + N_0 + \cdots + N_0^{\lfloor A_2|\log \eps| \rfloor - 1} \right) \bar{C}e^{-\bar{c}\eps^\eta} \notag
\\& \leq \left(\frac{1}{1-N_0^{-1}}\right) N_0^{A_2 |\log \eps|}  \bar{C}e^{-\bar{c}\eps^\eta} \notag
\\& \leq \eps^{k+1},
\end{align}
where the first inequality follows from the union bound and the number of internal vertices of $T^{\text{reg}}_{A_2 |\log \eps|}$, and the second simply uses the value of the geometric series. The last inequality is obvious by taking $\eps_{4}$ sufficiently small, depending on all of the constants in \eqref{a.2} and $N_{0}$. 

Finally we handle the $\Delta_\beta$ terms in \eqref{eq:trajdecomp2}. On $\mathcal{R}$, by a calculation similar to \eqref{eq:unifdisplacementbd} there are at most $(1-N_0^{-1})^{-1} N_0^{A_2 |\log \eps| + 1}$ vertices in $\mathcal{T}(\W_t)$. From  \eqref{eq:Deltabetabd} and a union bound, we obtain 
\begin{align} \label{eq:unifincbd}
Q^\eps_x\big[ \exists \beta \in \mathcal{T}(\W_t) : |\Delta_\beta| > \eps^{k+2}\, \big| \, \mathcal{B} \big]\indc_{\mathcal{R}} &\leq  (1-N_0^{-1})^{-1} N_0^{A_2 |\log \eps|+1}   \bar{C}e^{-\bar{c}\eps^\eta} \notag
\\ &\leq \eps^{k+1}
\end{align}
for $\eps$ sufficiently small, exactly as above. 

Now suppose that, conditional on $\mathcal{B}$, for all $\beta \in \mathcal{T}(\W_t)$, $|\Delta_\beta| \leq \eps^{k+2}$ and $|\xi^\beta_i| \leq \eps^{k+2}$ for all $i \in [N_0]$. Then on the event $\mathcal{R}$, for every $\alpha \in N(t)$, since $|\alpha| \leq A_2|\log \eps|$, for $C$ as above, 
\begin{align*}
&\sum_{i=0}^{|\alpha|-1} |\xi^{\alpha_i}_{j_i}| \leq |\alpha| \eps^{k+2} \leq A_2 |\log \eps| \eps^{k+2} \leq C \eps|\log \eps|, \, \text{and} \\
&\sum_{i=0}^{|\alpha|} |\Delta_{\alpha_i}| \leq (|\alpha| + 1)\eps^{k+2} \leq (A_2|\log \eps| + 1)\eps^{k+2} \leq C\eps|\log \eps|.
\end{align*}
Combining this with \eqref{eq:unifdisplacementbd} and \eqref{eq:unifincbd}, this implies that 
\begin{equation}\label{e.bd2}
\begin{cases}
&Q^{\eps}_{x}\Big[\exists \al\in N(t): \sum_{i=0}^{|\alpha|-1} |\xi^{\alpha_i}_{j_i}|>C\eps|\log\eps|\Big| \mathcal{B}\Big]\indc_{\mathcal{R}} \leq \eps^{k+1}\\
&\quad\quad\quad\quad\text{and}\\
&Q^{\eps}_{x}\Big[\exists \al\in N(t): \sum_{i=1}^{|\alpha|} |\Delta_{\alpha_i}|>C\eps|\log\eps|\Big| \mathcal{B}\Big]\indc_{\mathcal{R}}\leq \eps^{k+1}.
\end{cases}
\end{equation} 
From the above, and using \eqref{eq:trajdecomp2} combined with \eqref{eq:unifBMbd} and \eqref{e.bd2}, we conclude that 
\begin{align*}
Q^\eps_x\big[ \exists \alpha \in N(t) : |X_t(\alpha)-x| > 3C\eps|\log \eps|\, \big| \, \mathcal{B} \big]\indc_{\mathcal{R}} \leq 3\eps^{k+1}.
\end{align*}
Finally, by Lemma~\ref{lemma:regtree}, if $\eps<\eps_{4}$ is taken sufficiently small to satisfy all of the above estimates, then $Q^\eps_x[ \mathcal{R}^c] < \eps^{k+1}$, and the result follows with constant $C' = 3C=C'\eqref{e.univconstants}$.
\end{proof}

Equipped with these results, we are now ready to prove Proposition \ref{prop:interformgen}.
\begin{proof}[Proof of Proposition~\ref{prop:interformgen}] Fix $t \in [\sigma_1 \eps^2 |\log \eps| , \sigma_2 \eps^2 |\log \eps|]$, and let $A_{1}$ be as in \eqref{eq:gcompose}. Define the events
\begin{align*}
E_1:=\{\mathcal{T}(\W_t) \supseteq T^{\text{reg}}_{A_1  |\log \eps|}\} \,\,\, \text{and}\,\,\, E_2:=\{\forall \, \alpha \in N(t), |X_t(\alpha) - x| \leq C \eps| \log \eps|\},
\end{align*}
where $C$ is the constant from Lemma~\ref{lemma:unifdisplacement}. Let $K_1 = 2C$. Suppose that $d(t,x) \leq - 2C\eps|\log \eps|$. Then on $E_2$, by Proposition~\ref{p.distprop}(i), we have
\begin{equation*}
d(t,X_t(\alpha)) \leq - 2C \eps|\log \eps| + |X_t(\alpha) - x| \leq - C\eps|\log \eps|
\end{equation*}
for all $\alpha \in N(t)$. Since $t \leq \sigma_2\eps^2|\log \eps|$, by Proposition~\ref{p.distprop}(iii), for small enough $\eps$ depending on $h_{0}$, we have
\begin{equation*}
d(0,X_t(\alpha)) \leq -C \eps|\log \eps| + C_1 \sigma_2 \eps^2|\log \eps|.
\end{equation*}
Recall that $p^+(\delta,\phi) = (a + \delta)\indc_{\{\phi \leq 0\}} + b \indc_{\{\phi > 0\}}$, with $\delta \in (0, \delta_{*})$. The above implies that for small enough $\eps$, depending on $C$ and $C_{1}$ (hence depending on $\norm{\phi}_{C^{4}(\overline{B})}$, $k$ and $N_{0}$), the right hand side of the above equation display is negative, and hence we have
\begin{equation} \label{eq:formationevaluation}
p^+(\delta,\phi)(X_t(\alpha)) = a+\delta\, \text{ for all } \alpha \in N(t).
\end{equation}
Observe that since $g(p)<p$ for all $p \in (a,a+\delta]$,  the sequence $g^{(n)}(a+\delta)$ is decreasing in $n$. This implies that if $T_1 \subseteq T_2$, then $V(\emptyset; T_1, a+\delta)$ stochastically dominates $V(\emptyset; T_2, a+\delta)$, where $V(\emptyset; T,a+\delta) := V(\emptyset; T,v(L(T)))$ with $v(L(T))$ chosen to be iid $\text{Bernoulli}(a+\delta)$. In particular, by \eqref{eq:formationevaluation}, on $E_1 \cap E_2$ and conditional on the event $\mathcal{T}(\W_t) = T$ with $T \supseteq T^{reg}_{A_{1}|\log \eps|}$, we have
\[\V(\W_t; p^+(\delta,\phi))\indc_{E_{1}\cap E_{2}}\indc_{\left\{\mathcal{T}(\W_{t})=T\right\}} = E[V(\emptyset; T, a + \delta)] \leq E[V(\emptyset; T^{\text{reg}}_{A_1|\log \eps|}, a+ \delta)],\]
where the expectation is over a collection of independent Bernoulli$(a+\delta)$ leaf votes. We remark that by \eqref{eq:gcompose},
\begin{align*}
E[V(\emptyset; T^{\text{reg}}_{A_1|\log \eps|}, a+ \delta)] = g^{(\lfloor A_1|\log \eps|\rfloor)}(a+\delta) \leq a + \eps^{k+1}.
\end{align*}
In particular, since on $E_{1}\cap E_{2}$, we must have $\mathcal{T}(\W_t) = T$ with $T\supseteq T^{reg}_{A_{1}|\log \eps|}$, this implies 
\begin{equation*}
Q^\eps_x [\V(\W_t; p^+(\delta,\phi)) = 1, E_1 \cap E_2] \leq a + \eps^{k+1}.
\end{equation*}
By Lemmas~\ref{lemma:regtree} and \ref{lemma:unifdisplacement}, $Q^\eps_x[E_1^c \cup E_2^c] \leq 5\eps^{k+1}$. Hence, 
\begin{equation*}
Q^\eps_x [\V(\W_t; p^+(\delta,\phi))= 1] \leq a + 6\eps^{k+1},
\end{equation*}
and the proof is complete. 
\end{proof}

\subsection{Propagation of the interface} \label{ss:propagation}
We again collect here some constants and display their dependencies:
\begin{equation}\label{e.constants}
\begin{cases}
\text{$\al\in (0,1)=$ the parameter in the statement of \eqref{j.4},}\\
\text{$L = L(\norm{\phi}_{C^{4}(\overline{B})})\in [1, \infty)$ from Corollary \ref{corollary:coupling},}\\
\text{$h_0 = h_0(\norm{\phi}_{C^{4}(\overline{B})})\in (0,1)$ introduced in Proposition \ref{p.distprop},}\\
\text{$c_{0}=c_{0}(\norm{g}_{C^{2}([0,1])})$ as in \eqref{g.5}},\\
c_\gamma=c_{\ga}(k):= k\gamma^{-1},\\
\text{$c_{1}=c_{1}(k+1)$ from Theorem~\ref{thm:bbminterface}, where the bounds hold with $k+1$,}\\
\text{$\sigma_{1}\in [1, \infty), \sigma_{2}=\sigma_{1}+c_\ga$ as defined in Proposition \ref{prop:interformgen}.}
\end{cases}
\end{equation}
We also define the new constants
\begin{equation}\label{e.newconstants}
\begin{cases}
m_0=m_{0}(\norm{\phi}_{C^{4}(\overline{B})},\al):= 16L/\alpha,\\
C_*=C_{*}(\norm{\phi}_{C^{4}(\overline{B})},\al):= 2(3+m_0 \gamma)/c_{0},\\
m_1 = m_{1}(\norm{\phi}_{C^{4}(\overline{B})},\al)=8C_*Lc_2/(\delta_*\alpha).
\end{cases}
\end{equation}

The next two results are the key results which establish that the interface asymptotically propagates according to MCF. The first is a crucial lemma which allows us to compare the one-dimensional dual process started from different initial conditions which are calculated in terms of Brownian motions in different dimensions. It is similar to \cite[Lemma~2.18]{EFP2017} and \cite[Lemma~3.12]{DH2021}. The second result, Proposition~\ref{prop:propagationgen}, is also analogous in spirit to the approach of \cite{EFP2017, DH2021}. Those papers use a proof by contradiction to argue that the interface propagates according to MCF, whereas we have reformulated this as an induction.
\begin{lemma} 
\label{lemma.gfunctioncoupling.gen}
There exists $\eps_6=\eps_{6}(\norm{\phi}_{C^{4}(\overline{B})}, \al, \text{\ref{e.univconstants}}) \in (0,1)$ such that for every $x$, $t \in (c_\gamma \eps^2 |\log \eps|,h_0]$, constant $K=K\eqref{e.univconstants}>0$, and $s \in [m_0 \eps^{k+2}, c_\gamma \eps^2 |\log \eps|]$, if $\eps<\eps_6$, 
\begin{align} \label{eq.gfunctioncoupling_gen}
&E^{\bmd}_x\left[g\left( P^\eps_{d(t-s,\bmd_s) + K \eps|\log \eps| + 2\eps^{k+2}}\left[\V(\mathbf{B}_{t-s}) = 1\right] + C_*\eps^{k} \right)\right] \notag
\\& \hspace{2 cm} \leq E^{\bmo}_{0}\left[g\left( P^\eps_{d(t,x) + K \eps|\log \eps| +\bmo_s}\left[\V(\mathbf{B}_{t-s}) = 1\right]\right)\right] + (1-c_0/2)C_*\eps^{k} \notag
\\ &\hspace{2 cm} \quad  + C_*\eps^{k} \cdot \norm{g}_{C^{1}([0,1])} \indc_{\left\{s 
\leq m_1 \eps^{k+1}|\log \eps| \right\}},
\end{align}
where the constants $c_{\ga}, h_{0}, m_{0}, C_{*}, m_{1}$ are defined in \eqref{e.constants} and \eqref{e.newconstants}. 
\end{lemma}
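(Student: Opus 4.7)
The proof hinges on two ingredients: the pathwise coupling from Corollary~\ref{corollary:coupling} (which exchanges the $\mathbf{d}$-dimensional Brownian expectation for a one-dimensional one with a strict downward drift on $\{s\leq \La\}$), and a careful pointwise comparison of the composed $g$-function using the contractivity near stable fixed points \eqref{g.5} and the slope estimate of Lemma~\ref{lemma:bbmslope}. Let $\La$ be the stopping time defined in Corollary~\ref{corollary:coupling}. Under the coupling, on $\{s\leq \La\}$ we have pointwise
\[
d(t-s,\bmd_s) + K\eps|\log\eps| + 2\eps^{k+2} \leq d(t,x) + K\eps|\log\eps| + \bmo_s - \eta_s,\qquad \eta_s := \tfrac{\alpha s}{4L} - 2\eps^{k+2},
\]
and the choice $m_0 = 16L/\alpha$ guarantees $\eta_s \geq 2\eps^{k+2}$ for $s\geq m_0\eps^{k+2}$. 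Applying the monotonicity \eqref{e:BBMmonotone} of the 1D BBM vote probability in its starting point, together with the monotonicity \eqref{g.0} of $g$, dominates the LHS integrand on $\{s\leq \La\}$ by $g(p(\rho-\eta_s) + C_*\eps^k)$, where $p(z) := P^\eps_z[\V(\mathbf{B}_{t-s})=1]$ and $\rho := d(t,x) + K\eps|\log\eps| + \bmo_s$. Thus the remaining task reduces to a pointwise comparison of $g(p(\rho-\eta_s)+C_*\eps^k)$ with $g(p(\rho))$.

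The pointwise comparison proceeds by a case analysis on the value of $p(\rho)$. In Case 1, when $p(\rho)\in[a,a+\delta_*]\cup[b-\delta_*,b]$, monotonicity of $p$ places $p(\rho-\eta_s)\leq p(\rho)$, so $p(\rho-\eta_s)+C_*\eps^k$ remains in a slightly enlarged contraction region for $\eps$ small; the mean value theorem and \eqref{g.5} yield
\[
g\bigl(p(\rho-\eta_s)+C_*\eps^k\bigr) - g(p(\rho)) \leq (1-c_0)C_*\eps^k \leq (1-c_0/2)C_*\eps^k.
\]
In Case 2, when $|p(\rho)-\mu|\leq b-\mu-\delta_*$, the bound $\eta_s\leq \alpha c_\gamma\eps^2|\log\eps|/(4L) \leq c_2\eps|\log\eps|$ (valid for $\eps$ small) lets us invoke Lemma~\ref{lemma:bbmslope} at the pair $(\rho,\rho-\eta_s)$:
\[
p(\rho) - p(\rho-\eta_s) \geq \frac{\delta_*\eta_s}{c_2\eps|\log\eps|}.
\]
When $s > m_1\eps^{k+1}|\log\eps|$, the definition $m_1 = 8C_*Lc_2/(\delta_*\alpha)$ forces this decrease to exceed $C_*\eps^k$, so $p(\rho-\eta_s)+C_*\eps^k \leq p(\rho)$ and the $g$-increment is non-positive by \eqref{g.0}. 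Otherwise we simply use Lipschitzianity of $g$ to bound the increment by $\|g\|_{C^1}C_*\eps^k$, producing the indicator term on the right of \eqref{eq.gfunctioncoupling_gen}.

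Taking expectations under the coupling delivers the required estimate on $\{s\leq \La\}$. For $\{s>\La\}$, the spatial and temporal Lipschitz properties of $d$ (Proposition~\ref{p.distprop}(i),(iii)) combined with Gaussian tail bounds on $\bmd_s$ and the scale $s\leq c_\gamma\eps^2|\log\eps|$ show that $P[s>\La]$ decays super-polynomially in $\eps$ whenever $|d(t,x)| \leq r_0/2$; when instead $|d(t,x)|>r_0/2$, the quantity $d(t-s,\bmd_s)+K\eps|\log\eps|+2\eps^{k+2}$ stays bounded away from zero with super-polynomial probability, so Theorem~\ref{thm:bbminterface} forces the corresponding 1D BBM probability to be super-polynomially close to $a$ or $b$, and Case 1's contraction again applies. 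In both scenarios the contribution from $\{s>\La\}$ is $O(\|g\|_{C^0}\cdot e^{-c\eps^{-\eta}})$ and thus absorbed into the $O(\eps^k)$ slack.

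The main obstacle is the pointwise case analysis, particularly the quantitative matching between the slope decrease in Case 2 and the perturbation $C_*\eps^k$: this is what dictates the precise forms of $m_0, m_1, C_*$ in \eqref{e.newconstants} and the appearance of the contraction factor $(1-c_0/2)$ rather than $(1-c_0)$, which gives room to absorb boundary effects between the two cases where $p(\rho)$ is in a transition zone between the stable-fixed-point region and the middle region.
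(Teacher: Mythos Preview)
Your approach is correct and uses the same four tools as the paper: the It\^{o} coupling (Corollary~\ref{corollary:coupling}), contraction of $g$ near the stable equilibria \eqref{g.5}, the slope estimate (Lemma~\ref{lemma:bbmslope}), and the interface stability (Theorem~\ref{thm:bbminterface}). The organization is different. The paper splits \emph{deterministically} into three cases according to the size of $d(t,x)$ relative to a threshold $c_*\eps|\log\eps|$: the two ``far'' cases $d(t,x)\leq -c_*\eps|\log\eps|$ and $d(t,x)\geq c_*\eps|\log\eps|$ are handled directly via Theorem~\ref{thm:bbminterface} and contraction (without ever invoking the coupling), and only in the middle case $|d(t,x)|\leq c_*\eps|\log\eps|$ is the coupling applied, followed by your Case~1/Case~2 split (on the random $p_\eps$, evaluated at the drifted point rather than at $\rho$). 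You instead lead with the coupling on $\{s\leq\Lambda\}$, perform the pointwise split on $p(\rho)$ inside the expectation, and mop up $\{s>\Lambda\}$ and the regime $|d(t,x)|>r_0/2$ at the end. The paper's ordering is a bit cleaner because the coupling is only used where $|d(t,x)|\ll r_0$, so the failure event has probability $\leq\eps^{k+1}$ by a Gaussian bound at scale $\eps|\log\eps|$ and there is no need for a separate far-from-interface analysis; your tail treatment (``Case~1's contraction again applies'' when $|d(t,x)|>r_0/2$) is really the paper's cases~(i)/(ii) in compressed form. Both routes give the same bound with the same constants.
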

\begin{remark}
The constant $K$ in the statement of Lemma \ref{lemma.gfunctioncoupling.gen} is in preparation for how we will apply the result subsequently in the argument. Assuming that $K=K(\text{\ref{e.univconstants}})$, this introduces the dependence of $\eps_{6}$ on \eqref{e.univconstants}.
\end{remark}

\begin{proof}[Proof of Lemma~\ref{lemma.gfunctioncoupling.gen}] 

Recall the constant $c_1=c_{1}(\text{\ref{e.univconstants}},k+1)$ in \eqref{e.constants} from Theorem~\ref{thm:bbminterface}. We fix a constant $c_* \geq 3(c_1 + K)$, whose value we may adjust as necessary.

We begin by stating a displacement bound for Brownian motion. For $s \leq c_\gamma \eps^2 |\log \eps|$, by scaling, we have
\begin{align*}	
P^{\bmo}_0 \left[\sup_{r \leq s} |\bmo_{r}| > 2^{-1}c_* \eps |\log \eps|\right] &\leq P^{\bmo}_0 \left[\sup_{r \leq 1} \,(\eps^2|\log\eps|)^{1/2}|\bmo_{r}| > 2^{-1}c_* \eps |\log \eps|\right]
\\ &= P^{\bmo}_0 \left[\sup_{r \leq 1} |\bmo_r| \geq 2^{-1}c_* |\log \eps|^{1/2} \right]
\\& \leq 2(2\pi)^{-1/2} \exp (- 8^{-1}(c_*)^{2}|\log \eps|), 
\end{align*}
where the last line follows from the reflection principle. A similar computation holds in higher dimensions by considering the coordinates individually. We may enlarge $c_*$ as needed (depending on dimension $\mathbf{d}$ and $k$) to obtain that, for sufficiently small $\eps>0$, depending on $\mathbf{d}$, $k$, $c_{1}$, and $K$, for all $s \leq  c_\gamma \eps^2|\log \eps|$,
\begin{equation} \label{e_Brownianmaxbd_gen}
\begin{aligned}
&P^{\bmo}_0\Big[\sup_{r \leq s}| \bmo_r| > 2^{-1}c_* \eps |\log \eps| \Big] \leq \eps^{k+1}\\
&P^{\bmd}_x\Big[\sup_{r \leq s} |\bmd_r - x| > 2^{-1}c_* \eps |\log \eps|\Big] \leq \eps^{k+1}.
\end{aligned}
\end{equation}

Now define
\[G_\eps(t,x) := E^{\bmd}_x\left[g\left( P^\eps_{d(t-s, \bmd_s) + K\eps|\log \eps| + 2\eps^{k+2}}\left[\V(\mathbf{B}_{t-s}) = 1\right] + C_*\eps^{k} \right)\right].\]
We consider the three following cases separately:
\begin{itemize}
\item[(i)] $d(t,x) \leq -c_*\eps |\log \eps|$,
\item[(ii)] $d(t,x) \geq c_* \eps |\log \eps|$,  
\item[(iii)] $|d(t,x)| \leq c_* \eps |\log \eps|$. 
\end{itemize}
Let us first consider case (i), so that $d(t,x) \leq - c_* \eps |\log \eps|$. In this case, we will verify that \eqref{eq.gfunctioncoupling_gen} holds by proving that
\begin{equation}\label{e.case1}
G_{\eps}(t,x)\leq a+ (1-c_0/2)C_* \eps^{k}.
\end{equation}
Indeed, by \eqref{eq:1diminterval} and the fact that $g$ is increasing by \eqref{g.3}, the right hand side of \eqref{eq.gfunctioncoupling_gen} is clearly larger than the right hand side of the above.

By Proposition~\ref{p.distprop}(i) and (iii), for $\eps>0$ sufficiently small such that $s\leq c_\gamma \eps^2 |\log \eps| \leq \tau_{0}=\tau_{0}(\norm{\phi}_{C^{4}(\bar{B})})$, for $C_{1}=C_{1}(\norm{\phi}_{C^4(\overline{B})})$,
\begin{align}\label{e_distancebd_triangle_gen}
|d(t-s,\bmd_s) - d(t,x)| &\leq |\bmd_s - x| + \sup_y |d(t-s,y) - d(t,y)| \notag
\\ &\leq |\bmd_s - x| + C_1 s.
\end{align}
Using $g\leq 1$, $g$ is increasing, the monotonicity of %$P^{\eps}_{(\cdot)}$
$z \mapsto P^{\eps}_z[\V(\B_t)=1]$, i.e. \eqref{e:BBMmonotone}, and \eqref{e_Brownianmaxbd_gen}, we have
\begin{align} \label{e:glemmapf1_gen} 
&G_\eps(t,x) \notag 
\\ & \leq  E^{\bmd}_x\Big[g\big(P^\eps_{d(t-s,\bmd_s) + K\eps|\log \eps| + 2\eps^{k+2}}\left[\V(\mathbf{B}_{t-s})=1\right] + C_*\eps^{k} \big)\indc_{\left\{|\bmd_s - x| \leq 2^{-1}c_* \eps |\log \eps|\right\}}\Big]\notag
\\&  \quad+ P^W_x\left[|\bmd_s - x| > 2^{-1}c_* \eps |\log \eps|\right]\notag
\\& \leq g\left(P^\eps_{d(t,x) + (2^{-1}c_*+K) \eps|\log \eps|+ C_1 s + 2\eps^{k+2}}\left[\V(\mathbf{B}_{t-s})=1\right] + C_* \eps^{k}	\right)+ \eps^{k+1}.
\end{align} 
Next, because $d(t,x)\leq -c_* \eps|\log\eps|$ with $c_* \geq 3(c_1 + K )$ and $s \leq c_\gamma \eps^2 |\log \eps|$, we have that for sufficiently small $\eps$, depending on $C_{1}, c_{\ga}$,
  \begin{eqnarray*}
  \lefteqn{
d(t,x) + \left( \frac{c_*}{2} + K\right) \eps|\log \eps|+C_1 s+ 2\eps^{k+2}}\\
& & \leq  - \left(\frac{3}{2}c_1 + \frac{1}{2} K \right)\eps |\log \eps| + C_1 \cdot c_\gamma \eps^2 |\log \eps| + 2\eps^{k+2} \\
& &  \leq -c_1\eps |\log \eps|.
\end{eqnarray*}
Hence by Theorem~\ref{thm:bbminterface} and \eqref{e:BBMmonotone},
\begin{equation*}
P^\eps_{d(t,x) + (2^{-1}c_*+K) \eps|\log \eps|+ C_1 s + 2\eps^{k+2}}\left[\V(\mathbf{B}_{t-s})=1\right] \leq a + \eps^{k+1}. 
\end{equation*}
Continuing from \eqref{e:glemmapf1_gen} and using that $g$ is increasing, we obtain that
\begin{align*}
G_\eps(t,x) \leq g(a + \eps^{k+1} + C_*\eps^{k}) + \eps^{k+1}.
\end{align*}
By \eqref{g.5} and the mean value theorem, for sufficiently small $\eps$ depending on $c_{0}$ and $C_*$, we have
\begin{equation*}
g(a + \eps^{k+1} + C_*\eps^{k}) \leq a + (1-c_0)(\eps^{k+1}+ C_*\eps^{k}) \leq a + (1-2c_0/3)C_* \eps^{k}, 
\end{equation*}
and combining this with the above yields \eqref{e.case1}. 

Next we handle case (ii), when $d(t,x)\geq c_* \eps |\log \eps|$. In this case, we will verify that \eqref{eq.gfunctioncoupling_gen} holds by proving that
\begin{equation}\label{e.case2}
b + (1-c_0)C_*\eps^{k}\leq E^{\bmo}_0\left[g(P^\eps_{d(t,x) + K\eps|\log \eps| + \bmo_s}[\V(\mathbf{B}_{t-s})=1])\right]+ (1-c_0/2)C_*\eps^{k} .
\end{equation}
Indeed, by the monotonicity of $g$ and \eqref{eq:1diminterval}, the left hand side of \eqref{eq.gfunctioncoupling_gen} is at most $g(b+C_{*}\eps^{k})$. By \eqref{g.5} and the mean value theorem, $b+(1-c_{0})C_{*}\eps^{k}\geq g(b+C_{*}\eps^{k})$ for $\eps$ sufficiently small, depending on $\delta_{*}$ and $k$, and \eqref{eq.gfunctioncoupling_gen} follows from \eqref{e.case2}. 

To prove \eqref{e.case2}, we begin by observing that
\begin{multline*}
E^{\bmo}_0\left[g(P^\eps_{d(t,x) + K\eps|\log \eps| + \bmo_s}[\V(\mathbf{B}_{t-s})=1])\right] 
\\ 
\begin{aligned}
& \geq E^{\bmo}_0\left[g(P^\eps_{d(t,x) +\bmo_s}[\V(\mathbf{B}_{t-s}) = 1]) \indc_{\left\{|\bmo_s| \leq 2^{-1}c_* \eps|\log \eps|\right\}}\right]\\
 & \geq E^{\bmo}_0\left[g(P^\eps_{\frac{3}{2}c_1\eps|\log\eps|}[\V(\mathbf{B}_{t-s}) = 1]) \indc_{\left\{|\bmo_s| \leq 2^{-1}c_* \eps|\log \eps|\right\}})\right]
\\ & \geq g\left(P^\eps_{\frac{3}{2}c_1\eps|\log\eps|}\left[\V(\mathbf{B}_{t-s}) = 1\right]\right)(1 - \eps^{k+1}),
\end{aligned}
\end{multline*}
where we used monotonicity \eqref{e:BBMmonotone}, the assumption of case (ii), the fact that $c_{*}>3c_{1}$, and \eqref{e_Brownianmaxbd_gen}. By Theorem~\ref{thm:bbminterface}, the argument of $g$ in the last line above is at least $b - \eps^{k+1}$. In particular, by \eqref{g.5}, we have
\begin{align*}
E^{\bmo}_0\left[g(P^\eps_{d(t,x) + K\eps|\log \eps| + \bmo_s}[\V(\mathbf{B}_{t-s})=1])\right] &\geq g(b-\eps^{k+1})(1 - \eps^{k+1})\\
&\geq (b-(1-c_0)\eps^{k+1})(1 - \eps^{k+1})\\
&\geq b - (1+b)\eps^{k+1} \geq b-2\eps^{k+1}. 
\end{align*}
Hence, we obtain that for $\eps$ sufficiently small, depending on $c_{0}$ and $C_{*}$,
\begin{align*}
E^{\bmo}_0&\left[g(P^\eps_{d(t,x) + K\eps|\log \eps| + \bmo_s}[\V(\mathbf{B}_{t-s})=1])\right]+ (1-c_0/2)C_*\eps^{k}\\
 &\quad\quad\quad\geq  b - 2\eps^{k+1}+ (1-c_0/2)C_*\eps^{k}\\
&\quad\quad\quad \geq b+(1-c_{0})C_{*}\eps^{k}
\end{align*}
as desired for \eqref{e.case2}.

Finally, consider case (iii), when $|d(t,x)| \leq c_* \eps|\log \eps|$. Let $r_0=r_{0}(\norm{\phi}_{C^{4}(\bar{B})})>0$ and $Q_{h_0, r_{0}}$ be as in Proposition~\ref{p.distprop}, and note that for sufficiently small $\eps$, depending on $c_{1}$ and $K$, $(t,x) \in Q_{h_0,r_0}$. For a Brownian motion $\bmd_r$ started at $x$, define the  event \[A_{r_0,s} = \left\{|d(t-r,\bmd_r)| \leq r_0 \text{ for all } r \in [0,s]\right\}.\]
Again arguing as in \eqref{e_distancebd_triangle_gen}, and using the fact that $s \leq c_\gamma \eps^2 |\log \eps|$, it follows that for sufficiently small $\eps$, $\{\sup_{r \leq s} |\bmd_r -x| \leq 2^{-1}c_* \eps |\log \eps|\} \subseteq A_{r_0,s}$. Hence, by \eqref{e_Brownianmaxbd_gen},
\begin{align*}
G_\eps(t,x) \leq  E^{\bmd}_x\Big[g\big(P^\eps_{d(t-s,\bmd_s) + K \eps|\log \eps| + 2\eps^{k+2}}\left[\V(\mathbf{B}_{t-s})=1\right] + C_*\eps^{k} \big)\indc_{A_{r_0,s}}\Big]  + \eps^{k+1}.
\end{align*}
By Corollary~\ref{corollary:coupling}, there exists a one-dimensional Brownian motion $(\bmo_r, r \geq 0)$ such that, on $A_{r_0,s}$,
\begin{equation*}
d(t-r,\bmd_r) \leq d(t,x) + \bmo_r - \frac{\alpha r}{4L}\,\, \text{ for all } r \in [0,s]. 
\end{equation*}
By monotonicity as in \eqref{e:BBMmonotone}, it follows that, on $A_{r_0,s}$,
\begin{align*}
P^\eps_{d(t-s,\bmd_s) + K \eps|\log \eps| + 2\eps^{k+2}}&\left[\V(\mathbf{B}_{t-s})=1\right]\\
&\leq P^\eps_{d(t,x) + K \eps|\log \eps| + \bmo_s - \frac{\alpha s}{4L} + 2\eps^{k+2}}\left[\V(\mathbf{B}_{t-s})=1\right].
\end{align*}
Since the right hand side of the above only depends on the one-dimensional Brownian motion, we may apply this bound and write the resulting expression as an expectation under one-dimensional Brownian motion, i.e.
\begin{align*}
G_\eps(t,x) \leq  E^{\bmo}_{0}\left[g\left(P^\eps_{d(t,x) + K\eps|\log \eps| + \bmo_s  - \frac{\alpha s}{4L} + 2\eps^{k+2}}\left[\V(\mathbf{B}_{t-s})=1\right] + C_*\eps^{k} \right)\right]  + \eps^{k+1}.
\end{align*}
By assumption, $s \geq 16L \eps^{k+2}/\alpha$, so in particular we have 
\begin{equation} \label{e.xyz_gen}
G_\eps(t,x) \leq E^{\bmo}_0[g(p_\eps + C_*\eps^{k})] + \eps^{k+1},
\end{equation}
where
\[p_\eps := P^\eps_{d(t,x) +K\eps|\log\eps| + \bmo_s - \frac{\alpha s}{8L}}\left[\V(\mathbf{B}_{t-s})=1\right].\]
We further subdivide into two cases: 
\begin{equation*}
\text{(a)}\,\,  |p_\eps - \avg| \leq \avg - a - \delta_* \quad\text{or}\quad \text{(b)}\,\,  |p_\eps - \avg| >  \avg - a - \delta_*.
\end{equation*}
First consider (a), in which case, since $s \leq c_{\ga}\eps^{2}|\log\eps|$, and $b-\mu=\mu-a$, we can apply Lemma~\ref{lemma:bbmslope}, with 
\[\rho = d(t,x) +K\eps|\log\eps| +\bmo_s - \frac{\alpha s}{8L} , \quad \rho' = d(t,x) + K\eps|\log\eps| + \bmo_s.\]
We obtain that for sufficiently small $\eps$, 
\begin{equation*}
p_\eps \leq P^\eps_{d(t,x) +K \eps|\log \eps| +\bmo_s}\left[\V(\mathbf{B}_{t-s}) = 1\right] - \frac{\delta_* \alpha s}{8 L c_2 \eps | \log \eps|} .
\end{equation*}
Hence, by monotonicity and Lipschitz continuity of $g$, we have 
\begin{align*}
&g(p_\eps + C_*\eps^{k})
\\ &\quad\leq g \Big(P^\eps_{d(t,x) +K\eps|\log \eps| + \bmo_s}\left[\V(\mathbf{B}_{t-s}) = 1\right] - \frac{\delta_* \alpha s}{8L c_2 \eps | \log \eps|} + C_* \eps^{k} \Big)
\\ &\quad\leq g\left(P^\eps_{d(t,x) + K\eps|\log \eps| + \bmo_s}\left[\V(\mathbf{B}_{t-s}) = 1\right]\right)  + C_*\eps^{k} \cdot \|g'\|_\infty  \indc_{\left\{\frac{\delta_* \alpha s}{8L c_2 \eps | \log \eps|} \leq C_* \eps^{k} \right\}}.
\end{align*}
The above combined with \eqref{e.xyz_gen} implies \eqref{eq.gfunctioncoupling_gen} in case (iii)(a).

Now consider case (iii)(b). By \eqref{eq:1diminterval}, $p_\eps \in [a,b]$. Given this and \eqref{g.1}, the condition $|p_\eps - \avg| > \avg - a - \delta_*$ is equivalent to having either $p_\eps \in [a,a+\delta_*)$ or $p_\eps \in (b-\delta_*,b]$. For all such $p_\eps$, by \eqref{g.5}, we have that for $\eps$ sufficiently small, in terms of $\delta_{*}$, 
\begin{equation*} \label{eq:caseiiig}
g(p_\eps + C_*\eps^{k})\leq g(p_\eps) + (1-2c_0/3)C_* \eps^{k}. 
\end{equation*}
By substituting the above into \eqref{e.xyz_gen} and applying monotonicity, we obtain 
\begin{align*}
G_{\eps}(t,x)&\leq E^{\bmo}_{0}[g(p_{\eps})]+ (1-2c_0/3)C_* \eps^{k}+\eps^{k+1}\\
&\leq E^{\bmo}_{0}\left[g\left(P^\eps_{d(t,x) +K\eps|\log\eps| + \bmo_s}\left[\V(\mathbf{B}_{t-s})=1\right]\right)\right]+(1-c_0/2)C_* \eps^k,
\end{align*}
which implies \eqref{eq.gfunctioncoupling_gen}. Thus \eqref{eq:caseiiig} holds for all the claimed values of $p_\eps$, and the lemma is proved.
\end{proof}

We may now prove the following inductive result, which allows us to bootstrap our bounds over time. 
\begin{proposition} \label{prop:propagationgen} There exists $\eps_{7}=\eps_{7}(\text{\ref{e.univconstants}}, \eqref{e.constants})\in (0,1)$ such that the following holds. Let $t_0 \in (0,h_0)$ and $K=K\eqref{e.univconstants}>0$ and suppose that there exists $\bar{\eps}\in (0,1)$ such that $t_0 > c_\gamma \bar{\eps}^2 |\log \bar{\eps}|$, and for all $\eps \leq \bar{\eps}$, and $t \in [t_0 - c_\gamma \eps^2 |\log \eps|,t_0)$, 
\begin{equation}
\label{prop_propagation_inequality_gen}
\sup_x \left(Q^\eps_x\left[\V(\X_t; p^+(\phi,\delta)) = 1\right]-P^\eps_{d(t,x) + K \eps|\log\eps|}\left[\V(\mathbf{B}_t)=1\right]\right)\leq C_*\eps^{k},
\end{equation}
for $c_{\ga}$ as in \eqref{e.constants} and $C_{*}$ as in \eqref{e.newconstants}. Then for all $\eps \leq \bar{\eps} \wedge \eps_7$, \eqref{prop_propagation_inequality_gen} holds for all $t \in [t_0, (t_0 + m_0 \eps^{k+2}) \wedge h_0)$, where $m_{0}$ is defined in \eqref{e.newconstants}. 
\end{proposition}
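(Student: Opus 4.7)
The plan is to prove the desired bound at time $t$ by conditioning on the first branch time $\tau$ of the dual process $\X$ (and, simultaneously, of the one-dimensional branching Brownian motion $\B$, which has the same exponential branching rate $\gamma \eps^{-2}$). First I would apply Lemma~\ref{lemma:gfunMarkov} to decompose $Q^\eps_x[\V(\X_t; p^+(\phi,\delta)) = 1]$ and \eqref{eq:SMP_browniancomparison} to decompose $P^\eps_{d(t,x) + K\eps|\log\eps|}[\V(\mathbf{B}_t)=1]$, noting that with $z := d(t,x) + K\eps|\log\eps|$ the one-dimensional pre-branch trajectory satisfies $B_\tau = z + \bmo_\tau$. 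I would then split the integration over $\tau$ into three regimes: \textbf{(A)} $\tau \in [0, m_0 \eps^{k+2}]$, \textbf{(B)} $\tau \in (m_0 \eps^{k+2}, c_\gamma \eps^2 |\log \eps|]$, and \textbf{(C)} $\tau > c_\gamma \eps^2 |\log \eps|$ (which absorbs the tail $\{\tau > t\}$).

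In regime (A) the event has probability at most $\gamma m_0 \eps^k$, and in regime (C), using $\gamma c_\gamma = k$, the probability is at most $e^{-k|\log \eps|} = \eps^k$ (with a comparable bound for the $\{\tau > t\}$ tail, using $t \geq t_0 > c_\gamma \bar\eps^2|\log\bar\eps|$ and $\eps \leq \bar\eps \wedge \eps_7$ sufficiently small). Since both conditional expectations lie in $[a, b] \subseteq [0,1]$, the combined contribution of regimes (A) and (C) to $\mathrm{LHS} - \mathrm{RHS}$ is at most $(\gamma m_0 + 2)\eps^k$.

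In regime (B), the inductive hypothesis \eqref{prop_propagation_inequality_gen} applies at time $t - \tau$, since the constraints $\tau \in [m_0\eps^{k+2}, c_\gamma\eps^2|\log\eps|]$ and $t \in [t_0, t_0 + m_0\eps^{k+2})$ together give $t - \tau \in [t_0 - c_\gamma\eps^2|\log\eps|, t_0)$. Thus, for each offspring $i \in [N_0]$,
\[
\mathfrak{p}_i := Q^\eps_{Y_\tau + \xi_i}[\V(\X_{t-\tau}; p^+) = 1] \leq P^\eps_{d(t-\tau, Y_\tau + \xi_i) + K\eps|\log\eps|}[\V(\mathbf{B}_{t-\tau}) = 1] + C_* \eps^k.
\]
Using hypotheses \eqref{a.1} and \eqref{a.2}, I would couple the root's trajectory with a $\mathbf{d}$-dimensional Brownian motion so that $|Y_\tau + \xi_i - \bmd_\tau| \leq 2\eps^{k+2}$ uniformly in $i$, except on an event of stretched exponentially small probability. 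Since $|Dd(t-\tau,\cdot)| = 1$ by Proposition~\ref{p.distprop}(i), this translates into an $O(\eps^{k+2})$ upward shift of the spatial argument of the one-dimensional probability, which I would absorb via \eqref{e:BBMmonotone} and the monotonicity of $g$ from \eqref{g.0}, reducing the $N_0$ arguments of the multivariate $g$ to a common univariate argument. Lemma~\ref{lemma.gfunctioncoupling.gen} (applied with $s = \tau$ and the same $K$) then yields, up to a stretched exponential error,
\begin{align*}
E^\eps_x\big[g(\mathfrak{p}_1,\dots,\mathfrak{p}_{N_0}) \,\big|\, \tau\big] &\leq E^{\bmo}_0\left[g\left(P^\eps_{d(t,x)+K\eps|\log\eps|+\bmo_\tau}[\V(\mathbf{B}_{t-\tau})=1]\right)\right] \\
&\qquad + (1-c_0/2)C_*\eps^k + C_*\eps^k \norm{g}_{C^{1}([0,1])} \indc_{\{\tau \leq m_1 \eps^{k+1}|\log\eps|\}}.
\end{align*}
The first right-hand-side term is precisely the conditional integrand in the SMP decomposition \eqref{eq:SMP_browniancomparison} of the RHS of \eqref{prop_propagation_inequality_gen}, so it cancels upon integrating against the branching density. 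The second term integrates to at most $(1-c_0/2)C_*\eps^k$, while the indicator term contributes at most a constant multiple of $\eps^{2k-1}|\log\eps|$, which is $o(\eps^k)$ since $k > 1$.

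Summing the three regimes gives $\mathrm{LHS} - \mathrm{RHS} \leq (\gamma m_0 + 2)\eps^k + (1-c_0/2)C_*\eps^k + o(\eps^k)$, and the choice $C_* = 2(3 + m_0\gamma)/c_0$ from \eqref{e.newconstants} has been engineered precisely so that this is bounded by $C_*\eps^k$ for all $\eps$ sufficiently small, closing the induction. The main obstacle is the careful bookkeeping of constants: every error term produced (the crude bounds in regimes (A) and (C), the Brownian coupling errors from \eqref{a.1}--\eqref{a.2}, the reduction via monotonicity of $g$, and the residual terms in Lemma~\ref{lemma.gfunctioncoupling.gen}) must be accommodated by the contractive factor $(1 - c_0/2)$ multiplying the inductive budget $C_*\eps^k$, and it is this accounting that fixes the precise value of $C_*$.
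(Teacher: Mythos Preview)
Your proposal is correct and follows essentially the same approach as the paper: condition on the first branch time, discard the tails $\tau < m_0\eps^{k+2}$ and $\tau > c_\gamma\eps^2|\log\eps|$ at cost $O(\eps^k)$, on the main window invoke the inductive hypothesis at time $t-\tau$, use \eqref{a.1}--\eqref{a.2} and monotonicity to pass to a univariate argument of $g$, apply Lemma~\ref{lemma.gfunctioncoupling.gen}, and close with the constant accounting via $C_* = 2(3+m_0\gamma)/c_0$. The only cosmetic difference is that you decompose both sides via the strong Markov property and cancel the leading terms, whereas the paper bounds the left-hand side outright and then reinterprets the resulting integral as the one-dimensional quantity via \eqref{eq:SMP_browniancomparison}; these are the same argument.
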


\begin{proof} For convenience, write
\begin{align*}
J_\eps(t,x) &:= Q^\eps_x\left[\V(\X_t; p^+(\phi,\delta)) = 1\right].
\end{align*}
Suppose that \eqref{prop_propagation_inequality_gen} holds for all $t \in [t_0 - c_\gamma \eps^2 |\log \eps|, t_0)$, for $\eps \leq \bar{\eps}$. We will show that for all $\ve$ sufficiently small, \eqref{prop_propagation_inequality_gen} also holds at $t_1 \in [t_0, (t_0 + m_0 \eps^{k+2}) \wedge h_0)$ with the same constant $C_{*}$. Thus, for $\eps$ smaller than a threshold whose value is chosen later, we fix $t_1 \in [t_0, (t_0 + m_0 \eps^{k+2}) \wedge h_0)$. 

Fix $x_0$ and let $I := [m_0 \eps^{k+2}, c_\gamma \eps^2 |\log \eps|]$. Under $Q^\eps_{x_{0}}$, we denote by $\tau$ the time of the first branching event and write $(Y_t, 0\leq t <\tau)$ to denote the trajectory of the root individual up to its branch time. We remark that since $t_0 > c_\gamma \bar{\eps}^2 |\log \bar{\eps}|$, $\{\tau \in I\} \subset \{\tau < t_1\}$. Consequently, from Lemma~\ref{lemma:gfunMarkov}, we have
\begin{align} \label{eq:propproofgfun}
Q^\eps_{x_0}\big[\V(\W_{t_1}; p^+(\phi,\delta)) = 1, &\tau \in I \, \big| \, \cF_\tau \big]\notag \\
&= Q^\eps_{x_0}\left[\V(\W_{t_1}; p^+(\phi,\delta)) = 1, \tau < t_1 \, \big| \, \cF_\tau \right] \indc_{\{\tau \in I \}} \notag \\
&=  g(\mathfrak{p_1},\dots,\mathfrak{p}_{N_0})\indc_{\{\tau \in I\}} 
\end{align}
where
\begin{equation*}
\mathfrak{p}_i := Q^\eps_{Y_{\tau} + \xi_i} [\V(\W_{t_1-\tau}; p^+(\phi,\delta)) = 1]
\end{equation*}
and $(\xi_1,\dots,\xi_{N_0})$ are the displacements of the root's offspring sampled from $\mu^\eps_{Y_\tau}$. In particular, 
\begin{align*}
Q^\eps_{x_0}\left[\V(\W_{t_1}; p^+(\phi,\delta)) = 1 \, \big| \, \cF_\tau \right] \leq \indc_{\{\tau \in I \}} E^\eps_{x_0}[g(\mathfrak{p_1},\dots,\mathfrak{p}_{N_0})] + \indc_{\{\tau \not \in I \}}.
\end{align*}
Recall that $\tau$ is exponential with parameter $\gamma \eps^{-2}$. Let $S$ denote an independent Exp($\gamma \eps^{-2}$) random variable with law and expectation $P^S$ and $E^S$. Taking the expectation of the above and using that $\V(\cdot)\leq 1$, we have
\begin{align} \label{eq:firstbranchdecomp_gen}
J_\eps(t_{1},x) & = Q_{x_0}^\eps \otimes E^S \left[ \V(\W_{t_1}; p^+(\phi,\delta)) = 1\right] \notag
\\ & \leq E^\eps_{x_0} \otimes E^S \left[ g(\mathfrak{p_1},\dots,\mathfrak{p}_{N_0}) \indc_{\{S \in I \}}\right] + P^S[S \not \in I] \notag
\\ & \leq E^\eps_{x_0} \otimes E^S \left[ g(\mathfrak{p_1},\dots,\mathfrak{p}_{N_0}) \indc_{\{S \in I \}}\right] + (1 + m_0 \gamma)\eps^{k},
\end{align}
where the last inequality follows because $P^{S}(S \geq c_\gamma \eps^2 |\log \eps|) = \eps^{k}$ (by definition of $c_\gamma$ in \eqref{e.constants}) and $P^S[S < m_0 \eps^{k+2}] \leq m_0 \gamma \eps^{k}$. We abuse notation slightly; in the above expression, $\mathfrak{p}_i$ is redefined with $S$ replacing $\tau$, that is \[\mathfrak{p}_i = Q^\eps_{Y_S + \xi_i} [\V(\W_{t_1-S}; p^+(\phi,\delta)) = 1].\] If $S \in I$, then $t_1 - S \in [t_0 - c_\gamma \eps^2| \log \eps|, t_0]$, and hence by assumption we have
\begin{equation} \label{eq:prop_onedimbd}
\mathfrak{p}_i \leq P^\eps_{d(t_1 - S, Y_S + \xi_i) + K \eps|\log \eps|} [ \V(\B_{t_1 - S}) = 1] + C_*\eps^{k}
\end{equation}
for each $i$. By \eqref{a.1}, we may couple $Y$ with a Brownian motion $\bmd$ started from $x_0$ such that
\begin{equation*}
Q^\eps_{x_0}[E_{1}]:=Q^\eps_{x_0}[|\bmd_s - Y_s| \geq \eps^{k+2}] \leq \bar{C}e^{-\bar{c}\eps^\eta}
\end{equation*}
for all $s \leq c_\gamma \eps^2 |\log \eps|$. Furthermore, by \eqref{a.2},
\[ Q^\eps_{x_0}[E_{2}]:=Q^\eps_{x_0} [|\xi_i|> \eps^{k+2} \, \text{ for all } \, i=1,\dots,N_0] \leq \bar{C}e^{-\bar{c}\eps^\eta}.\]
On $E_{1}^{c}\cap E_{2}^{c}$, we have 
\[ |(Y_{S} +\xi_i) - \bmd_S| \leq 2\eps^{k+2}\]
for all $i \in [N_0]$, and by Proposition~\ref{p.distprop}(i), this implies
\[|d(t_1-S,Y_{S} +\xi_i) - d(t_1-S,\bmd_S)| \leq 2\eps^{k+2}.\]
Hence, by monotonicity, 
\begin{equation*}
\mathfrak{p}_i \leq P^\eps_{ d(t_1-S,\bmd_S)+ K \eps|\log \eps|+2\eps^{k+2}} [ \V(\B_{t_1 - S}) = 1] + C_*\eps^{k}.
\end{equation*}
Hence, from the above and \eqref{eq:prop_onedimbd}, for any $S\in I$, 
\begin{align*}
&Q^\eps_{x_0}\Big[\mathfrak{p}_i >  P^\eps_{d(t_1 - S, \bmd_S) + K \eps|\log \eps| + 2\eps^{k+2}} [ \V(\B_{t_1 - S}) = 1] + C_* \eps^{k}\, \text{ for some } i\in [N_0] \Big] \\&\quad\leq Q^\eps_{x_0}[E_{1}]+Q^\eps_{x_0}[E_{2}]<\eps^{k},
\end{align*}
where we assume that $\eps$ is small enough so that $2\bar{C}e^{-\bar{c}\eps^\eta} < \eps^{k}$. It then follows from \eqref{g.0} and \eqref{eq:firstbranchdecomp_gen} that 
\begin{align} \label{eq:firstbranchdecomp2_gen}
&J_\eps(t_{1},x) 
\\& \leq  E^{\bmd}_{x_0} \otimes E^S \Big[ g \left( P^\eps_{d(t_1-S,\bmd_S) + K\eps|\log \eps| + 2\eps^{k+2}}[\V(\B_{t_1-S})=1]  + C_*\eps^{k} \right)\notag\\
&\qquad \qquad \qquad \indc_{\{m_{0}\eps^{k+2}\leq S \leq c_{\ga} \eps^2 |\log \eps| \}} \Big] \notag
\\ &\qquad + \left(2 + m_0\gamma \right) \eps^{k}. \notag
\end{align}
In the above, we have written the expectation with respect to the $\mathbf{d}$-dimensional Brownian motion $\bmd$, which is justified since the only remaining randomness from $Q^\eps_{x_0}$ is through the Brownian motion coupled to $Y$. We now expand the expectation with respect to $S$ and apply Lemma~\ref{lemma.gfunctioncoupling.gen} as follows:
\begin{align*}
&J_\eps(t_{1},x)
\\ &\leq \int_{m_0 \eps^{k+2}}^{c_\gamma \eps^2|\log \eps|} \gamma \eps^{-2} e^{-\gamma \eps^{-2}s}\times \\
&\qquad \qquad \times E_{x_0}^{\bmd} \left[ g \left( P^\eps_{d(t_1-s,\bmd_s) + K\eps|\log \eps| + 2\eps^{k+2}}[\V(\B_{t_1-s})=1]  + C_*\eps^{k} \right)\right]ds \\& \quad+ \left(2 + m_0\gamma\right) \eps^{k} 
\\ &\leq \int_{m_0 \eps^{k+2}}^{c_\gamma \eps^2|\log \eps|} \gamma \eps^{-2} e^{-\gamma \eps^{-2}s} \left( E^{\bmo}_0 \left[ g \left(P^\eps_{d(t_1,x_0) + K\eps|\log \eps| + \bmo_s} [\V(\B_{t_1-s})=1]\right) \right]   \right)ds
 \\& \quad  + \int_{m_0 \eps^{k+2}}^{c_\gamma \eps^2|\log \eps|} \gamma \eps^{-2} e^{-\gamma \eps^{-2}s} \left((1-c_0/2) C_*\eps^{k}  + C_* \eps^{k} \cdot \|g'\|_\infty \indc_{\left\{s \leq m_1 \eps^{k+1}|\log \eps|\right\}}\right) ds
 \\ &\quad +\left(2 + m_0 \gamma \right) \eps^{k}
 \\ &\leq \int_{m_0 \eps^{k+2}}^{c_\gamma \eps^2|\log \eps|} \gamma \eps^{-2} e^{-\gamma \eps^{-2}s} \left( E^{\bmo}_0 \left[ g \left(P^\eps_{d(t_1,x_0) + K\eps|\log \eps| + \bmo_s} [\V(\B_{t_1-s})=1]\right) \right] \right) ds
 \\&\quad + C_*\|g'\|_\infty \gamma m_1 \cdot \eps^{2k-1}|\log \eps|+ \left((1-c_0/2)C_* + 2 + m_0 \gamma \right) \eps^{k}. 
\end{align*}
Since $k >1$, the first term in the last line is at most $\eps^{k}$ for sufficiently small $\eps$, depending only on constants in \eqref{e.univconstants}. In the integral in the final expression, we may re-interpret $s$ as the first branch time $\tau$ of the one-dimensional BBM and apply the strong Markov property, i.e. Lemma~\ref{lemma:gfunMarkov}, to this process. We obtain that for small $\eps$,
\begin{align*}
J_\eps(t,x) &\leq P^\eps_{d(t_1,x_0) + K\eps|\log \eps|} \left[\V(\B_{t_1})=1,  \tau \in I \right] + \left((1-c_0/2)C_* + 3 + m_0 \gamma \right) \eps^{k} 
 \\ &\leq P^\eps_{d(t_1,x_0) + K\eps|\log \eps|} \left[\V(\B_{t_1})=1 \right] + \left((1-c_0/2)C_* + 3 + m_0 \gamma \right) \eps^{k}.
\end{align*}
Since $C_*$ defined in \eqref{e.newconstants} satisfies $c_0 C_*/2 = 3 + m_0 \gamma$, we have shown that
\[Q^\eps_{x_0}[\V(\W_{t_{1}}; p^+(\phi,\delta)) = 1] \leq P^\eps_{d(t_{1},x) + K \eps|\log \eps|}[\V(\B_{t_{1}}) = 1] + C_*\eps^{k}.\]
This holds for all $x_0$ and $t_1 \in [t_0, t_0 + m_0 \eps^{k+2}]$, and the proof is complete. \end{proof}

\subsection{Remainder of the proof of (J4)} \label{ss:J4holds}
The following is now an easy consequence of Proposition~\ref{prop:interformgen} and Proposition \ref{prop:propagationgen}.

\begin{proposition}\label{prop:onedcouplefulltime}
There exists $K_2 =K_{2}\eqref{e.univconstants}\in [1, \infty)$ and $\eps_{8}=\eps_{8}(\text{\ref{e.univconstants}}, \eqref{e.constants})\in (0,1)$ such that for $\eps<\eps_{8}$, for all $t \in [\sigma_1 \eps^2 |\log \eps|, h_0]$, we have
\begin{equation*}
\sup_x \left(Q^\eps_x[\V(\W_t ; p^+(\delta,\phi)) = 1] -P^\eps_{d(t,x) + K_2 \eps|\log \eps|}[\V(\B_t) =1]\right)\leq  C_* \eps^{k}.
\end{equation*}
\end{proposition}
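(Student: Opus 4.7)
The plan is to combine Proposition \ref{prop:interformgen} with iterated applications of Proposition \ref{prop:propagationgen}. I would take $K_2 := K_1 + c_1$, where $K_1$ comes from Proposition \ref{prop:interformgen} and $c_1 = c_1(k+1,\text{\ref{e.univconstants}})$ is the constant from Theorem \ref{thm:bbminterface} applied with $\ell = k+1$ (already fixed in \eqref{e.constants}). First I would verify the inequality on the base interval $t \in [\sigma_1 \eps^2|\log\eps|, \sigma_2\eps^2|\log\eps|]$ by splitting on $d(t,x)$. If $d(t,x) \leq -K_1 \eps|\log\eps|$, Proposition \ref{prop:interformgen} gives $Q^\eps_x[\V(\W_t;p^+(\phi,\delta))=1] \leq a + \eps^k$, while $P^\eps_{\cdot}[\V(\B_t)=1] \geq a$ by \eqref{eq:1diminterval}, so the difference is at most $\eps^k \leq C_*\eps^k$. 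Otherwise $d(t,x) + K_2\eps|\log\eps| > c_1 \eps|\log\eps|$, and Theorem \ref{thm:bbminterface} gives $P^\eps_{d(t,x)+K_2\eps|\log\eps|}[\V(\B_t)=1] \geq b - \eps^{k+1}$; since $p^+(\phi,\delta) \leq b$ pointwise, \eqref{j.2} and \eqref{j.3} yield $Q^\eps_x[\V(\W_t;p^+(\phi,\delta))=1] \leq b$, so the difference is again $O(\eps^{k+1}) \leq C_*\eps^k$.

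Next I would iterate Proposition \ref{prop:propagationgen} along the discrete grid $t_n := \sigma_2 \eps^2|\log\eps| + n m_0 \eps^{k+2}$, proving by induction on $n \geq 0$ that the desired bound holds on $[\sigma_1 \eps^2|\log\eps|, t_n \wedge h_0)$ with the same constant $C_*$. The base case $n=0$ is handled above. For the inductive step, the interval $[t_n - c_\gamma \eps^2|\log\eps|, t_n) = [\sigma_1 \eps^2|\log\eps| + n m_0\eps^{k+2}, t_n)$ lies inside the region where the bound is already known, and the constraint $t_n \geq c_\gamma \eps^2|\log\eps|$ is automatic since $\sigma_2 \geq c_\gamma$. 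Proposition \ref{prop:propagationgen} applied with $t_0 = t_n$ and $K = K_2$ then extends the bound to $[t_n, t_{n+1} \wedge h_0)$ with the \emph{same} constant $C_*$. Since $h_0$ is of constant order while the step size is $m_0\eps^{k+2}$, at most $O(\eps^{-k-2})$ iterations cover $[\sigma_1\eps^2|\log\eps|, h_0]$; choosing $\eps_8 \leq \eps_3 \wedge \eps_7 \wedge \eps_1(k+1,h_0)$ small enough (depending on \eqref{e.univconstants} and \eqref{e.constants}) makes each step valid.

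The only delicate point, and the reason this induction is viable over a diverging number of steps, is that $C_*$ does not deteriorate across iterations. This is precisely the role of the choice $C_* = 2(3 + m_0\gamma)/c_0$ in \eqref{e.newconstants}: the contractive factor $(1-c_0/2)$ in Lemma \ref{lemma.gfunctioncoupling.gen} exactly absorbs the $O(\eps^k)$ errors introduced by the first-branch-time Markov decomposition used in the proof of Proposition \ref{prop:propagationgen}, so the bound is self-reproducing rather than compounding. Once this is understood, the proof is just the two-step assembly above; the main obstacle is really not in this proposition itself but in the base estimate of Proposition \ref{prop:interformgen} and the per-step propagation estimate in Proposition \ref{prop:propagationgen}, both of which are already in place.
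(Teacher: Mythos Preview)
Your proposal is correct and follows essentially the same approach as the paper: define $K_2 = K_1 + c_1$, establish the base case on $[\sigma_1\eps^2|\log\eps|,\sigma_2\eps^2|\log\eps|]$ by splitting on the sign of $d(t,x)+K_1\eps|\log\eps|$ and invoking Proposition~\ref{prop:interformgen} and Theorem~\ref{thm:bbminterface}, then iterate Proposition~\ref{prop:propagationgen} in steps of $m_0\eps^{k+2}$ to cover $[\sigma_1\eps^2|\log\eps|,h_0]$. Your explicit remark that $C_*$ is self-reproducing under iteration is exactly the point the paper leaves implicit.
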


Before proving the above, we show how it proves Theorem~\ref{thm_J4_general}.

\begin{proof}[Proof of Theorem~\ref{thm_J4_general}] Let $t \in (0,h_0]$ and recall the sub-level set
\[L^-_{t,\alpha} = \left\{x : \phi(x) - t\left[F_*(D^2\phi(x),D\phi(x))-\alpha \right] < 0 \right\}=\left\{\psi_{\al}(t,\cdot)<0\right\}.\]
We will show that for each $x \in L^-_{t,\alpha}$,
\begin{equation} \label{eq:J4limsuppf}
\limsups_{\eps \to 0} Q^\eps_x[\V(\W_t;p^+(\delta,\phi)) = 1] = a,
\end{equation}
where we recall the definition of the half-relaxed limit $\limsups_{\eps \to 0}$ from Definition~\ref{def:hrlimit}. Recall that $d(t,\cdot)$ is the signed distance function to the zero level set $L^0_{t,\alpha}$, chosen to have the same sign as $\psi_\alpha(t,\cdot)$. In particular, $x \in L^-_{t,\alpha}$ is equivalent to $d(t,x) < 0$. If $d(t,x)<0$, let us suppose $d(t,x)=-\eta$ for some $\eta>0$. By Proposition~\ref{p.distprop}, if $t\in (0, h_{0}]$, there exists $\delta=\delta(\eta, h_{0})>0$ such that for all $|t'-t| + |x'-x| \leq \delta$, $d(t',x') \leq -\eta/2$, and in particular, $d(t',x') + K_2\eps|\log \eps| \leq -\eta/4$ for sufficiently small $\eps$. Hence, by Proposition~\ref{prop:onedcouplefulltime}, if $|t'-t| + |x'- x| \leq \delta$, and $t\in (0, h_{0}]$,
\begin{align*}
Q^\eps_{x'} [\V(\W_{t'};p^+(\delta,\phi)) = 1] \leq P^\eps_{-\eta/ 4}[V(\B_{t'}) = 1] + C_*\eps^{k}.
\end{align*}
By Theorem~\ref{thm:bbminterface}, since $-\eta/4\leq -c_{1}\eps|\log\eps|$ for $\eps$ sufficiently small, we conclude that $P^\eps_{-\eta /4} [\V(\B_{t'}) = 1] \leq a+ \eps^{k}$ for sufficiently small $\eps$. In particular, $Q^\eps_{x'}[\V(\W_{t'};p^+(\delta,\phi)) = 1]$ converges uniformly to $a$ on $\{(t',x') : |t'-t| + |x'-x| \leq \delta\}$. This implies \eqref{eq:J4limsuppf}, and the proof is complete.
\end{proof}

We conclude with the proof of Proposition \ref{prop:onedcouplefulltime}. 
\begin{proof}[Proof of Proposition~\ref{prop:onedcouplefulltime}]
We recall the constants $c_1$ from \eqref{e.constants} and $K_1$ from Proposition~\ref{prop:interformgen}, respectively, and we define $K_2 = K_1 + c_1$. Let $t \in [\sigma_1 \eps^2 |\log \eps|, \sigma_2 \eps^2|\log \eps|]$ where $\sigma_{1}<\sigma_{2}$ appears in \eqref{e.constants}. By Proposition~\ref{prop:interformgen}, 
\begin{align} \label{eq:J4aux1}
\text{If $d(t,x) \leq -K_1\eps|\log \eps|$, then }\,\, Q^\eps_x[ \V(\W_t; p^+(\delta,\phi)) = 1] \leq a+\eps^{k}.
\end{align}
Now suppose that $d(t,x)> - K_1 \eps|\log \eps|$. In this case, $d(t,x) + K_2 \eps|\log \eps| > c_1 \eps |\log \eps|$, and in particular, by Theorem~\ref{thm:bbminterface}, we have $P^\eps_{d(t,x) + K_2\eps|\log \eps|} [\V(\B_t) = 1] \geq b - \eps^{k+1}$. In other words, for sufficiently small $\eps$, depending on $C_{*}$,
\begin{align}\label{eq:J4aux2}
\text{If $d(t,x) > -K_1\eps|\log \eps|$, then $P^\eps_{d(t,x) + K_2 \eps |\log \eps|}[\V(\B_t) =1] \geq b-C_* \eps^{k}$}.
\end{align}
Note that by the definition of $p^+(\delta, \phi)$ in \eqref{e.p0+def} and monotonicity, it is immediate that $Q^\eps_x[\V(\W_t;p^+(\delta,\phi)) = 1]\leq b$. Combining \eqref{eq:J4aux1}, \eqref{eq:J4aux2},  \eqref{eq:1diminterval}, and the above observation, this implies that for all $t \in [\sigma_1 \eps^2|\log \eps|, \sigma_2 \eps^2|\log \eps|]$,
\begin{align*} \label{eq:J4aux2}
\sup_{x} \bigg( Q^\eps_x[\V(\W_t;p^+(\delta,\phi)) = 1] -  P^\eps_{d(t,x) + K_2 \eps|\log \eps|}&[\V(\B_t) =1] \bigg) \\&\leq  (a+\eps^{k}-a)\vee (b-b+C_{*}\eps^{k})\\
&\leq C_* \eps^{k} .
\end{align*}
We may now iteratively apply Proposition~\ref{prop:propagationgen} to conclude that the above holds for all $t \in [\sigma_1\eps^2|\log \eps| ,h_0]$. Indeed, taking $t_0 = \sigma_2 \eps^2|\log \eps|$ and remarking that $\sigma_2 - \sigma_1 = c_\gamma$, Proposition~\ref{prop:propagationgen} implies that \eqref{eq:J4aux2} holds for all $t \in [\sigma_1 \eps^2|\log \eps|, \sigma_2 \eps^2 |\log \eps| + m_0 \eps^{k+2}]$. One then applies the result with $t_0 =  \sigma_2 \eps^2 |\log \eps| + m_0 \eps^{k+2}$, and so on. This completes the proof.
\end{proof}

\section{Applications to Stochastic Spatial Models Considered in [17]} \label{s.bbmslfv}

In this section, we present two new results which extend the work of Etheridge, Freeman, and Penington \cite{EFP2017}, which are simple consequences of Theorem \ref{t.realmain}. For brevity, we do not introduce the ``forward processes'' in detail; we focus instead on giving a precise description of the approximate dual process, the relation \eqref{eq:approxdualdef}, and the $g$-function. This allows us to present the new convergence results which are obtained through Theorem \ref{t.realmain}. 

\subsection{Ternary Branching Brownian Motion Subject to Majority Voting}\label{s.tbbm}

%The main model of interest in \cite{EFP2017} is the spatial $\Lambda$-Fleming-Viot model with selection, which we consider in the next subsection. However, in that work the authors first consider a simplified probabilistic model, which they remarkably show is dual to the Allen-Cahn equation. For completeness, we briefly discuss how our results can be applied to this model, but note that the result we state is not new, as the convergence of the Allen-Cahn equation to generalized MCF has been proved in the PDE literature, for example in \cite{BS} and some of the references therein.

The first model is a (multidimensional) ternary Branching Brownian motion (BBM) subject to majority voting, which was introduced in \cite{EFP2017}. Let $(X^{\eps}_{t}, t\geq 0)$ be a BBM where the particles evolve according to Brownian motion run at rate $1$, and after an Exp$(\eps^{-2})$-distributed random time, a particle splits into 3 particles who then undergo independent ternary BBMs. 
%As before, the ancestral process will be denoted by $(\W^{\eps}_{t}, t\geq 0)$, and we let $Q^{\eps}_{x}$ denote the distribution of the ancestral process started from $x\in \R^{\mathbf{d}}$. 

The voting algorithm on the space-time tree $\mathcal{T}_{t}=\mathcal{T}(\W^{\eps}_{t})$ is as follows. At time $t$, all children that are alive vote 1 with probability $p(X^{\eps}_{t})$ and 0 otherwise, and hence the random inputs of the leaves are distributed according to Bernoulli($p(X^{\eps}_{t}))$. Using the notation introduced in Section \ref{s:vote},
the voting algorithm we will consider is the deterministic ``majority vote'' function 
\begin{equation}\label{e.Thetadef}
\Theta(v_{1}, v_{2}, v_{3})=\begin{cases} 1&\text{if at least two arguments are 1,}\\
0&\text{otherwise}.
\end{cases}
\end{equation}
It follows that 
\begin{align*}
g(p_{1}, p_{2}, p_{3})&=E_{p_{1}, p_{2}, p_{3}}[\Theta(\mathsf{V}_{1}, \mathsf{V}_{2}, \mathsf{V}_{3})]\\
&=p_{1}p_{2}p_{3}+
p_{1}p_{2}(1-p_{3})+p_{1}(1-p_{2})p_{3}+(1-p_{1})p_{2}p_{3}
\end{align*}
is the probability that the majority of voters out of $(p_1, p_{2}, p_{3})$ vote 1, and for the univariate version, 
\begin{equation}\label{e.gbbmdef}
g(p)=3p^{2}-2p^{3}.
\end{equation}
In \cite{DH2021}, it is shown that this $g$ function satisfies \eqref{g.0}-\eqref{g.5}. As observed in \cite{EFP2017}, the function, 
\begin{equation}\label{e.acdual}
u^{\eps}(t,x):=Q^{\eps}_{x}[\V(\textbf{X}^{\eps}_{t};p)=1]
\end{equation}
solves the celebrated Allen-Cahn equation
\begin{equation*}
\begin{cases}
u^{\eps}_{t}-\frac{1}{2}\Delta u^{\eps}=\frac{1}{\eps^{2}}u^{\eps}(1-u^{\eps})\left(\frac{1}{2}-u^{\eps}\right)&\text{in $(0, \infty)\times \R^{\mathbf{d}}$},\\
u^{\eps}(0,x)=p(x)&\text{in $\R^{\mathbf{d}}$}.
\end{cases}
\end{equation*}
It is clear in this setting that  hypotheses \eqref{a.1} and \eqref{a.2} are automatically satisfied. Thus, by Theorem \ref{t.realmain}, we obtain the following result:
\begin{theorem}\label{t.bbm}
For any $p: \R^{\mathbf{d}}\rightarrow [0,1]$ which defines an initial interface $\Ga_{0}\subseteq \R^{\mathbf{d}}$, as $\eps \to 0$, $u^{\eps}$ defined by \eqref{e.acdual} converges locally uniformly to $(0,1)$-generalized MCF started from $\Ga_{0}$.  
\end{theorem}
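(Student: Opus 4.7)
The plan is to verify the hypotheses of Theorem \ref{t.realmain} directly, observing that in this model the ternary BBM is its own exact dual: the historical process $(\W^\eps_t, t \geq 0)$ is already of pure branching type in the sense of Definition \ref{d.defprocess}, so the approximate-dual relation \eqref{eq:approxdualdef} holds trivially (with zero error) by taking $\W^\eps_t$ itself as the approximate dual. Under this identification, the quantity $u^\eps(t,x;p)$ from \eqref{e.uepdef} coincides precisely with the expression in \eqref{e.acdual}, and so the conclusion of Theorem \ref{t.realmain} is exactly the convergence to be proved.

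It remains to check the structural assumptions \eqref{a.1} and \eqref{a.2} on the dual, together with the properties \eqref{g.0}--\eqref{g.5} of the associated $g$-function. For the dual, the spatial motion between branchings is $\mathbf{d}$-dimensional Brownian motion at rate $1$, so \eqref{a.1} holds with the trivial identity coupling $Y_s = \bmd_s$ (the left-hand side of the estimate is identically zero). Since children are born at the parent's location, all offspring displacements vanish identically, so \eqref{a.2} also holds trivially. The branching rate $\eps^{-2}$ corresponds to $\gamma = 1$ and $N_0 = 3$.

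For the $g$-function associated with majority voting, a direct computation gives
\[ g(p) = 3p^2 - 2p^3, \qquad g'(p) = 6p(1-p). \]
Solving $g(p) = p$ factors as $-p(2p-1)(p-1) = 0$, so the fixed points of $g$ in $[0,1]$ are exactly $\{0, 1/2, 1\}$; I would identify $a = 0$, $\avg = 1/2$, $b = 1$, which satisfies the symmetry $b - \avg = \avg - a = 1/2$. Monotonicity \eqref{g.0} is immediate from $g' \geq 0$ on $[0,1]$. The evaluations $g'(0) = g'(1) = 0 < 1 < 3/2 = g'(1/2)$ yield \eqref{g.1} (stability of $0,1$, instability of $1/2$, and no other fixed points) and \eqref{g.3}. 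The algebraic identity $g(1-p) = 1 - g(p)$, obtained by straightforward expansion, gives \eqref{g.2} with $a + b = 1$. Finally, \eqref{g.5} is a consequence of \eqref{g.1}--\eqref{g.3} and the smoothness of $g$, as pointed out in the remark following the statement of \eqref{g.5}. With all hypotheses verified, Theorem \ref{t.realmain} applies and produces the claimed convergence.

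There is no substantive obstacle here, which is why the statement is labelled a direct consequence in the excerpt. An equally short alternative would bypass the probabilistic framework entirely: the observation of \cite{EFP2017} that $u^\eps$ solves the rescaled Allen--Cahn equation, together with \cite[Theorem~4.1]{BS} on the convergence of rescaled Allen--Cahn solutions to generalized MCF, delivers the theorem at once. The merit of the route via Theorem \ref{t.realmain} is that it exhibits how the probabilistic framework of the present paper subsumes this prototypical PDE-based result without invoking any PDE machinery at the microscopic level.
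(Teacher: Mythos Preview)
Your proposal is correct and matches the paper's approach: the paper also observes that the result follows immediately either from the Allen--Cahn representation together with \cite[Theorem~4.1]{BS}, or from Theorem~\ref{t.realmain} with \eqref{a.1}--\eqref{a.2} trivially satisfied. The only difference is emphasis and level of detail: the paper leads with the Allen--Cahn route and defers the verification of \eqref{g.0}--\eqref{g.5} to \cite{DH2021}, whereas you lead with Theorem~\ref{t.realmain} and spell out the elementary checks on $g(p)=3p^2-2p^3$ explicitly.
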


While the result is immediate from the priorly mentioned results of \cite[Theorem 4.1]{BS}, it also serves as a simple illustration of the power of Theorem \ref{t.realmain}. 
%we highlight that Theorem \ref{t.bbm} is also immediate consequence from Theorem \ref{t.realmain}. In the case of ternary BBM subject to majority voting, hypotheses \eqref{a.1} and \eqref{a.2} are automatically satisfied, and hence Theorem \ref{t.realmain} implies that \eqref{j.1}-\eqref{j.4} hold. 

\subsection{The spatial $\Lambda$-Fleming-Viot model} \label{s.SLFV}
We apply our result to the spatial $\Lambda$-Fleming Viot with selection (SLFVS), in which the selection mechanism is chosen to model selection against heterozygosity. This model was considered in detail in \cite[Section 1.2]{EFP2017} and we refer the reader there for the motivation, derivation, and complete description of the model. 

To keep our presentation consistent with \cite[Section 1.3]{EFP2017}, we parametrize the rescaled models by $n \in \N$ so that the rescaled SLFVS is denoted by $w^{n}_{t}: \R^{d}\rightarrow [0,1],$ which we interpret as a measure-valued process (i.e. the density of a measure). The initial condition $w^{n}_{0}=p,$ where $p: \R^{d}\rightarrow [0,1]$ will define an initial interface. We note that in the definition of the rescaled process, there is a family of parameters $\left\{\eps_{n}\right\}_{n\geq 0}$ which is any sequence tending to $0$ such that $\eps_{n}(\log n)^{1/2}\to \infty$.

Our main result regarding SLFVS is the following:
\begin{theorem}\label{t.slfvs}
Let $(w^{n}_{t}, t\geq 0)_{n>0}$ denote the rescaled SLFVS as defined in \cite[Section 1.3]{EFP2017}. Then for any $p: \R^{\mathbf{d}}\rightarrow [0,1]$ which defines an initial interface $\Ga_{0}\subseteq \R^{\mathbf{d}}$, $\E^{n}_{p}[w^{n}_{t}(x)]$ converges locally uniformly to $(0,1)$-generalized MCF started from $\Ga_0$.
\end{theorem}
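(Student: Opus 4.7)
The plan is to verify the hypotheses of Theorem \ref{t.realmain} for the SLFVS, which amounts to: (i) identifying a moment dual and an approximate dual satisfying \eqref{eq:approxdualdef}, (ii) verifying \eqref{a.1} and \eqref{a.2} for the approximate dual, and (iii) confirming that the associated $g$-function satisfies \eqref{g.0}--\eqref{g.5}. Condition (iii) is essentially free: the voting mechanism for SLFVS is exactly the ternary majority rule described in Section \ref{s.tbbm} (three potential parents, most common allele wins), and thus $g(p)=3p^2-2p^3$, which was already verified to satisfy \eqref{g.0}--\eqref{g.5} in \cite{DH2021} and recalled around \eqref{e.gbbmdef}.

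For (i), I would use the ancestral process described in \cite{EFP2017}: starting from a single potential ancestor at $x$, one traces lineages backward in time. Lineages jump when they are affected by a reproduction event $(t,y,r)\in \Pi^n$ with $x\in B(y,r)$. At a neutral event, the lineage jumps, with probability $u_n$, to a location uniform in $B(y,r)$; with probability $1-u_n$, nothing happens. At a selective event (of rarer intensity $s_n$), each affected lineage is replaced by three lineages placed uniformly and independently in $B(y,r)$, and the majority-vote function $\Theta$ from \eqref{e.Thetadef} is applied at the root of the resulting subtree. Multiple lineages in the same event interact (coalesce / merge), which breaks the branching property. The moment duality
\[
\E^n_p[w^n_t(x)] \;=\; \hat Q^n_x\!\left[\V(\hat\W^n_t;p)=1\right]
\]
is precisely \cite[Lemma 3.7]{EFP2017}. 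The approximate dual $\W^n_t$ is obtained by declaring each interaction of two or more distinct lineages with a single reproduction event to produce \emph{independent} uniform replacements instead of coalescences; in effect, $\W^n_t$ is a pure branching process whose spatial motion alternates long periods of neutral jumps with rare selective ternary branch events at rate $\gamma_n = \gamma\eps_n^{-2}$ for some $\gamma>0$. The estimate \eqref{eq:dualapproxdual} then reduces to bounding, uniformly on compacts in $x$, the probability that two distinct lineages of the approximate dual fall inside a common ball of some $(t,y,r)\in\Pi^n$ before time $t$. Since the number of lineages is at most $3^{A_2|\log\eps_n|}$ on the regular-tree event of Lemma \ref{lemma:regtree}, and the rate of coalescence between any two lineages separated by distance of order $\eps_n^{-1}\gg \mathcal{R}_n$ is negligible, this can be controlled exactly as in \cite[Section~4]{EFP2017}.

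For (ii), verification of \eqref{a.2} is immediate since after a selective event the three offspring displacements lie within $\mathcal{R}_n = n^{-\beta}\mathcal{R}$ of the parent, and choosing the scale parameter $\eps_n$ of our framework in relation to $n$ (namely with $\mathcal{R}_n \ll \eps_n^{k+2}$ for appropriate $k$; this is ensured by the freedom in $\eps_n$ relative to $n^{-\beta}$), deterministically guarantees $\max_i |\xi_i|\leq \eps_n^{k+2}$. Verification of \eqref{a.1} is the main obstacle: between branching events, a single lineage $(Y_s)$ is a pure-jump process driven by the neutral part of $\Pi^n$, jumping at rate proportional to $n\cdot\mu^n((0,\mathcal{R}_n])\cdot u_n^{-1}\cdot \text{vol}$ with very small jump size. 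A direct computation of the generator shows that, for suitable normalization induced by the scaling $(u_n, n^\beta, \mu^n)$, the infinitesimal mean of $Y_s$ is $0$ and its infinitesimal covariance is $c\,I$ for an explicit $c>0$, so $Y_s$ converges to $c$-Brownian motion. One then couples $Y_s$ to a Brownian motion $\B^{\mathbf{d}}_s$ by the standard martingale-embedding/KMT-type argument; since jump sizes are bounded by $\mathcal{R}_n$ and the jump rate is polynomial in $n$, standard concentration gives the stretched-exponential control on $|Y_s - \B^{\mathbf{d}}_s|$ over time-scales $\eps_n^2|\log\eps_n|^2$ required by \eqref{a.1}. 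This is the technical crux; the short-time Brownian approximation for lineages of the SLFV is essentially already present in the analysis of \cite{EFP2017}, and I would extract it in the sharper form demanded by \eqref{a.1}.

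With (i)--(iii) established, Theorem \ref{t.realmain} applies directly and yields that $\E^n_p[w^n_t(x)]$ converges locally uniformly to $(0,1)$-generalized MCF started from $\Ga_0$, which is the claim of Theorem \ref{t.slfvs}. The main conceptual point beyond \cite{EFP2017} is that, because our Theorem \ref{t.realmain} does not require the classical MCF to exist, we recover convergence \emph{globally} in time and for all $p$ merely defining $\Ga_0$ in the sense of Definition \ref{d.defint}, without the regularity assumptions \eqref{C1} and \eqref{C3}.
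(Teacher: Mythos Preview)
Your approach is essentially the same as the paper's: verify the hypotheses of Theorem~\ref{t.realmain} by invoking the dual and approximate dual from \cite{EFP2017} (the paper cites \cite[Lemma~3.12]{EFP2017} directly for the coupling rather than re-deriving it), checking that $g(p)=3p^2-2p^3$, and verifying \eqref{a.1}--\eqref{a.2} via the Brownian approximation of \cite[Lemma~3.8]{EFP2017} together with the observation that $\eps_n(\log n)^{1/2}\to\infty$ forces $\eps_n^{k+2}\gg n^{-\beta}\geq \mathcal{R}_n$. One point you gloss over is that the SLFVS duality in \cite{EFP2017} is only stated in weak form (integrated against test functions $\psi\in C\cap L^1$), so the paper passes through an integrated version of \eqref{eq:approxdualdef} and concludes convergence for a.e.\ $(t,x)$ in compacts rather than invoking the pointwise identity directly.
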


We next describe the dual, the approximate dual, and the associated $g$ function. For details, we refer to Section~3 of \cite{EFP2017}. Fix $n\in \NN$. The dual is a branching/coalescing process $(\hat{X}^{n}_{t})_{t\geq 0}$ which is a $\cup_{\ell\geq 1}(\RR^{\mathbf{d}})^{\ell}$-valued Markov process, with $\hat{X}^n_{0}=x$, and $\hat{X}^{n}_{t}=(\hat{X}^{n}_t(\alpha): \alpha \in N(t) )$. The dynamics of $\hat{X}^{n}_t$ may be defined via a point process $\Pi^{n}$ which drives $w_t^n$. Each event in $\Pi$ independently marks individuals within its radius with probability $u_n$; in non-selective events, all marked individuals coalesce into a single offspring, whereas in selective events, all marked individuals coalesce and are replaced by three offspring whose locations are drawn independently and uniformly from a ball of radius $r$ centered at the parent.

It is shown in \cite{EFP2017} that $\hat{X}^n_t$ is approximated by a branching process. For any $\Lambda>0$, with high probability, by time $\Lambda$, no event marks more than one individual at a time. Conditional on no event marking multiple individuals up to time $\Lambda$, non-selective events are just random walk steps for the single marked individual, and selective events are ternary branches for the single marked individual. (Events with no marked individuals have no effect.) The approximate dual $X^n_t$ is defined as having precisely these dynamics and it is a branching dual in the sense of Section~\ref{s:vote}. It is a continuous time branching random walk with branch rate $\gamma \eps_n^{-2}$ for an explicit constant $\gamma$ (see (3.5) of \cite{EFP2017}). As before, we denote by $\hat{\X}^n_t$ and $\X^n_t$ the historical processes associated to $\hat{X}^n_t$ and $X^n_t$. Without loss of generality, we may construct coupled versions of $\hat{\X}^n_t$ and $\X^n_t$ on a common probability space whose law, when the initial individual starts at $x \in \R^{\mathbf{d}}$, we denote by $Q^n_{x}$. It is a consequence of \cite[Lemma~3.12]{EFP2017} that the coupling may be chosen so that for any $\Lambda > 0$,
\begin{equation} \label{e.SLFVaproxdual1}
    \sup_x \sup_{t \in [0,\Lambda]} Q^n_x[ \hat{\X}^n_t \neq \X^n_t] = o(1).
\end{equation}

For a description of the voting procedure $\V$ on the true dual $\hat{\X}^n_t$, see \cite{EFP2017}. Restricted to $\X^n_t$, the voting algorithm $\V$ is simply (ternary) majority voting and corresponds to univariate $g$-function $g(p) = 3p^2-2p^3$ as in \eqref{e.gbbmdef}. Since the SLFVS is a measure-valued process, defining the dual relation requires integration against test functions. The dual relation (see \cite[Theorem 3.4]{EFP2017}) is as follows: for every $\psi\in C(\R^{\mathbf{d}})\cap L^{1}(\R^{\mathbf{d}})$, 
\begin{equation}\label{e.slfvscouple}
\begin{aligned}
\E^{n}_{p}\left[\int_{\R^{\mathbf{d}}}\psi(x)w^{n}_{t}(x)\, dx\right]&=\int_{\R^{\mathbf{d}}}\psi(x)\hat{E}^{n}_{x}\left[\V(\hat{\W}^{n}_{t}; p)\right]\, dx\\
&=\int_{\R^{\mathbf{d}}}\psi(x)\hat{Q}^{n}_{x}\left[\V(\hat{\W}^{n}_{t}; p)=1\right]\, dx.
\end{aligned}
\end{equation}

We now verify the hypotheses of Theorem \ref{t.realmain} needed in order to prove Theorem \ref{t.slfvs}.
As a consequence of \eqref{e.SLFVaproxdual1}, for every $\psi\in C(\R^{\mathbf{d}})\cap L^{1}(\R^{\mathbf{d}})$, and $\Lambda>0$,
\begin{equation*}
\sup_{t\in [0, \La]} \left|\E^{n}_{p}\left[\int_{\R^{\mathbf{d}}}\psi(x)w^{n}_{t}(x)\, dx\right]-\int_{\R^{\mathbf{d}}}\psi(x)Q^{n}_{x}\left[\V(\W^{n}_{t}; p)=1\right]\, dx\right|=o(1). 
\end{equation*}
This guarantees a version of \eqref{eq:approxdualdef} holds for the purely branching dual $(\W^{n}_{t}, t\geq0)$. In particular, this implies that for any compact set $K\subseteq (0, \infty)\times \R^{\mathbf{d}}$ such that $Q^{n}_{x}\left[\V(\W^{n}_{t}; p)=1\right]$ converges to 1 or 0 uniformly on $K$, we have that $\E^{n}_{p}[w^{n}_{t}(x)]$ must also converge to the same value for a.e. $(t,x)\in K$.

As in the prior section, the $g$ function satisfies \eqref{g.0}-\eqref{g.5}. The SLFVS was not considered in \cite{DH2021}, but all of the estimates relevant to Section \ref{ss:formation} are proven in \cite[Section 3.2.3]{EFP2017}, with error bounds in terms of $\eps_{n}$. In particular, hypotheses \eqref{a.1} and \eqref{a.2} for this model take on the following form:
\begin{list}{ (\theaprimescan)}
{
\usecounter{aprimescan}
\setlength{\topsep}{1.5ex plus 0.2ex minus 0.2ex}
\setlength{\labelwidth}{1.2cm}
\setlength{\leftmargin}{1.5cm}
\setlength{\labelsep}{0.3cm}
\setlength{\rightmargin}{0.5cm}
\setlength{\parsep}{0.5ex plus 0.2ex minus 0.1ex}
\setlength{\itemsep}{0ex plus 0.2ex}
}
\item \label{aprime.1} \textbf{Lineages converge to Brownian motion.} For every $x$, $(Y_t, t\geq0)$ started at $x$ can be coupled with a Brownian motion $(\bmd_t, t\geq 0)$ started from $x$, such that for all $n>n_0$,
\begin{equation*}
\sup_x \sup_{s \in (0, \eps_{n}^2 |\log \eps_{n}|^2]} P^{Y, n}_x [ |Y_s - \bmd_s| > \eps_{n}^{k+2}]\leq \bar{C}e^{-\bar{c} \eps_{n}^\eta}.
\end{equation*}

\item \label{aprime.2} \textbf{Offspring dispersion concentration.} For $n>n_{0}$, we have 
\begin{equation*}
 \sup_y \mu^{n}_y[ \xi: \max_{1\leq i\leq N_0} |\xi_i| > \eps_{n}^{k+2} ] \leq \bar{C}e^{-\bar{c}\eps_{n}^\eta}.
\end{equation*}
\end{list}
These estimates are of the correct order given that the branch rate of $X_t^n$ is of order $\eps_n^{-2}$. Property \eqref{aprime.1} is a direct consequence of \cite[Lemma 3.8]{EFP2017}, and Property \eqref{aprime.2} follows from the scaling of the model. Theorem \ref{t.slfvs} is now immediate from Theorem \ref{t.realmain}.

\section{Interacting particle systems}\label{s.ips}
We present new convergence results for several interacting particle systems considered in the work of Huang and Durrett \cite{DH2021}. These results all follow as consequence of the arguments in Theorem \ref{t.realmain}. We discuss additional modifications which are necessary for each specific model in this section.

Following \cite{DH2021}, we discuss two different perturbations of the linear voter model and one particle system called the sexual reproduction model with fast stirring. As in the previous section, we do not include the full details of the forward process; our discussion primarily focuses on the approximate dual $(\X_{t}, t\geq 0)$, the relation \eqref{eq:approxdualdef}, and the $g$-function associated to the voting algorithm. 

Each interacting particle system is a rescaling of a $\{0,1\}^{\Z^\mathbf{d}}$-valued Markov process, characterized in terms of its flip rates. 
The state of the process at time $t\in [0, \infty)$ can be represented by the function $\xi_{t}: \Z^{\mathbf{d}}\rightarrow \left\{0,1\right\}$. The initial state $\xi_{0}$ will be determined by a function $p: \R^{\mathbf{d}}\rightarrow [0,1]$; if $(\xi_t :t \geq 0)$ has initial state $p$, then $\xi_0(x)$ is a Bernoulli$(p(x))$ random variable, and different sites' initial states are independent. 

We now briefly describe the scaling regime considered, but refer the reader to \cite{DH2021} for more details for each model. Our scaling parameter is denoted $\eps >0$, but the rescaled process lives on $\eta \Z^{\mathbf{d}}$, where $\eta = \eps \delta$ for some $\delta = \delta(\eps)$, that is, we consider $\xi^\eps_t \in \{0,1\}^{\eta \Z^{\mathbf{d}}}$. This process arises after two scalings. The first scaling defines a process on scale $\delta$, i.e. with values in $\{0,1\}^{\delta \Z^{\mathbf{d}}}$. The rates are tuned to obtain a ``motion'' resembling a random walk on scale $\delta$ with jump rate of order $\delta^{-2}$ and nonlinear effects/interactions with rate of order $O(1)$. An additional diffusive rescaling $(t,x) \mapsto (\eps^2 t,\eps x)$ amplifies the effect of the nonlinearity over $O(1)$ timescales. To achieve a necessary separation of time scales between diffusion and nonlinearity, one then chooses $\delta = \exp(-\eps^{-3})$. In fact, it suffices to take $\delta = \exp(-\eps^{-(2+c)})$ for any $c > 0$, but the proofs will break down if $c=0$. We fix $c>0$ and take $\delta = \exp(-\eps^{-(2+c)})$ in the sequel. We denote the law and expectation associated to $\xi^\eps_t$ with initial conditions generated by $p$ by $\mathbb{P}^\eps_p$ and $\mathbb{E}^\eps_p$. Finally, we extend $\xi^\eps_t$ to $\mathbb{R}^{\mathbf{d}}$ by setting $\xi^\eps_t(x) = \xi^\eps_t(\eta \lfloor \eta^{-1} x \rfloor)$ for $x \in \mathbb{R}^{\mathbf{d}}$, where $\lfloor \cdot \rfloor$ is is the integer part rounded down in each coordinate.

We now state three convergence theorems to generalized MCF for three different models, and discuss the verification of the hypotheses of Theorem \ref{t.realmain} in each setting.

\subsection{The Voter Model and the Lotka-Volterra Perturbation}  \label{s.voteLVvote}
Here we consider a model known as the Lotka-Volterra perturbation of the voter model, or simply the Lotka-Volterra voter model. A model for two competing species, its flip rates combine the standard flip rates of the voter model with the perturbation flip rates, in which two individuals collaborate to change the vote of a third individual. It was first studied in \cite{NeuhauserPacala}; see also \cite{CoxDurrettPerkins} and \cite[Section 1.3]{DH2021} for a full description of the model. The main result concerning the Lotka-Volterra perturbation is the following.

\begin{theorem}\label{t.LV}
Let $(\xi^\eps_t, t\geq 0)_{\eps>0}$ denote the rescaled Lotka-Volterra voter model with $\mathbf{d} \geq 3$. For any $p:\R^{\mathbf{d}}\rightarrow [0,1]$ defining an initial interface $\Ga_{0}\subseteq \R^{\mathbf{d}}$, as $\eps \to 0$, $\E^{\eps}_{p}[\xi^{\eps}_{t}]$ converges locally uniformly to $(0,1)$-generalized MCF started from $\Ga_{0}$. 
\end{theorem}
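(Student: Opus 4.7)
The strategy is to apply Theorem~\ref{t.realmain}, which requires three verifications for the rescaled Lotka-Volterra perturbation $(\xi^{\eps}_t)_{\eps > 0}$: the existence of an approximate dual satisfying \eqref{eq:approxdualdef}, the pure-branching conditions \eqref{a.1} and \eqref{a.2}, and the $g$-function conditions \eqref{g.0}-\eqref{g.5}. First I would describe the (approximate) dual. Via the standard graphical construction, the true dual $(\hat{\X}^{\eps}_t, t \geq 0)$ is a branching-coalescing random walk in which voter events coalesce lineages and perturbation events produce branchings. A direct algebraic identity shows that the LV flip rate can be implemented by the rule ``majority vote of two uniform lookups in the box $[-L,L]^{\mathbf{d}}$ together with the current site,'' yielding $N_0 = 3$ offspring per branching event (two of which are uniformly distributed in the rescaled box, the third placed at the parent's location). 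The voting algorithm is therefore majority voting on a ternary tree, so the univariate $g$-function is $g(p) = 3p^2 - 2p^3$, exactly as in Section~\ref{s.tbbm}. The approximate dual $(\X^{\eps}_t, t \geq 0)$ is then defined by excising coalescences, and is a pure branching process in the sense of Definition~\ref{d.defprocess}.

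Several hypotheses are then effectively immediate. Conditions \eqref{g.0}-\eqref{g.5} for $g(p) = 3p^2 - 2p^3$ have already been verified in \cite{DH2021}. Condition \eqref{a.2} is trivial: the offspring are deterministically within distance $\eta L = \eps L \exp(-\eps^{-3})$ of the parent, which is much smaller than $\eps^{k+2}$ for any fixed $k>1$, so the bad event has probability zero for small $\eps$. For \eqref{a.1}, each lineage is, between branching events, a mean-zero finite-range random walk on $\eta \Z^{\mathbf{d}}$ whose jump rate $\eps^{-2}\delta^{-2}$ and step size $\eta = \delta \eps$ are tuned so that the resulting process converges to a standard Brownian motion on $\R^{\mathbf{d}}$. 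A Skorokhod or KMT coupling on intervals of length $\eps^2|\log \eps|^2$ gives the required stretched-exponential bound; note that the lineage takes $O(\delta^{-2}|\log \eps|^2)$ steps in this window, each of size $\eta$, yielding errors far below $\eps^{k+2}$.

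The main obstacle, and the one place where the hypothesis $\mathbf{d} \geq 3$ and the stringent choice $\delta = \exp(-\eps^{-3})$ really enter, is establishing \eqref{eq:approxdualdef}, i.e., coupling $\hat{\X}^{\eps}_t$ with $\X^{\eps}_t$ so that the corresponding votes agree on compact time intervals with probability $1 - o(1)$. The plan is to show that in a compact window $[0,\Lambda]$ no two distinct lineages of $\hat{\X}^{\eps}_t$ coalesce. By Lemma~\ref{lemma:regtree} and its large-time analogue, with overwhelming probability the total number of lineages over $[0,\Lambda]$ is at most polylogarithmic in $\eps^{-1}$, so a union bound reduces matters to a pairwise coalescence estimate. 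Pairs of lineages evolve approximately as independent random walks on $\eta \Z^{\mathbf{d}}$, which in $\mathbf{d} \geq 3$ are transient; the very small $\delta$ makes the lattice spacing vanish so fast that the discretization effects do not disturb this transience estimate, and distinct lineages born at the same perturbation event are separated by at least the lattice spacing $\eta$. The precise coalescence bound is the one obtained in \cite[Section~4]{DH2021}, now applied pathwise to couple the two duals rather than merely to compute expectations. Once \eqref{eq:approxdualdef} is secured, Theorem~\ref{t.realmain} applies and yields locally uniform convergence of $\E^{\eps}_p[\xi^{\eps}_t]$ to $(0,1)$-generalized MCF started from $\Gamma_0$.
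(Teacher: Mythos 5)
Your proposal follows the same high-level strategy as the paper (verify the hypotheses of Theorem~\ref{t.realmain} and cite many of the same ingredients), and the treatment of the $g$-function, \eqref{a.2}, and \eqref{a.1} is sound. However, the plan for establishing \eqref{eq:approxdualdef} contains a genuine gap: you propose to show that \emph{no two distinct lineages of $\hat{\X}^{\eps}_t$ coalesce} on compact time windows, citing transience of random walks in $\mathbf{d}\geq 3$. This cannot work for sibling lineages. At a branching event the two new offspring are placed uniformly in $[-L\eta,L\eta]^{\mathbf{d}}\cap \eta\Z^{\mathbf{d}}$, i.e.\ at a distance of $O(1)$ \emph{lattice sites} from the parent and from each other; the small value of $\eta=\delta\eps$ is irrelevant because it is the separation in lattice units, not in absolute distance, that governs the hitting probability of two transient walks. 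Consequently, siblings coalesce shortly after their birth with probability bounded away from zero, and the total number of such events over a time $O(1)$ is $\Theta(\eps^{-2})$ — so the ``no coalescence'' event you rely on has probability tending to zero, not one.

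What the paper actually does (following \cite{DH2021}) is not to avoid sibling coalescences but to \emph{resolve} them. One first shows that, with high probability, the only coalescences are between siblings within a short time window $\eta^{1/2}$ of their birth; non-sibling coalescences are what the transience of walks in $\mathbf{d}\geq 3$ rules out, and this is where the small $\delta$ actually earns its keep. Then a structural observation about the voting algorithm is used: for the ternary majority rule arising here, a branching event that is immediately followed by a sibling coalescence can be removed from the tree without changing the vote at the root, since its outcome reduces to the vote of a single continuing lineage. Identifying coalescing siblings therefore yields an effective \emph{pure} branching process whose branch rate is the slowed-down $p_3\eps^{-2}$, where $p_3$ is the probability that the three offspring never coalesce. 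It is this slowed-down process, not the naive $\eps^{-2}$-rate ternary BRW, that plays the role of $\X^{\eps}_t$ in \eqref{eq:approxdualdef} and Theorem~\ref{t.realmain} (the flexibility to replace $\gamma=1$ by $\gamma=p_3$ is exactly what the paper's remark on branch-rate scaling provides). Without this reinterpretation your argument does not produce a pure branching approximate dual, and the application of Theorem~\ref{t.realmain} is unjustified.
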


The dual process $(\hat{\X}^{\eps}_{t}, t\geq 0)$ of $\xi^{\eps}_t(\cdot)$ is a branching-coalescing random walk with ternary branching; see \cite[Section 2.2]{DH2021} for a full description. In the case when $\mathbf{d} \geq 3$, the approximate dual $(\X^\eps_t, t\geq 0)$ is obtained by first restricting the time intervals during which branching and coalescence is permitted, which is shown to have no effect on the dynamics with high probability, and then identifying individuals which coalesce within time $\eta^{1/2}$ of their birth. Subsequently, it can be shown that branching events followed by coalescences may be ignored altogether. This gives $(\X^\eps_t, t\geq 0)$ a ternary branching structure, and the voting algorithm $\V$ corresponds to majority voting. Thus, the $g$ function in this setting is again given by 
\begin{equation*}
g(p)=p^{3}-3p^{2}.
\end{equation*}
Since branching events in $(\hat{\X}^{\eps}_{t}, t\geq 0)$ which are followed by coalescences are ignored in $(\X^{\eps}_{t}, t\geq 0)$, if $\hat{\X}^{\eps}_{t}$ has branch rate $\eps^{-2}$, then $(\X^\eps_{t}, t\geq 0)$ is a pure branching process with a slowed-down branch rate given by $p_{3}\eps^{-2}$, where $p_{3}$ is the probability that the three particles, after a branching event, do not coalesce. In the sequel, we assume without loss of generality that $\X^\eps_t$ has branch rate $\eps^{-2}$, which corresponds to a slight reparametrization of the original process.

We now give a precise definition of the dynamics of $(\X^\eps_t, t\geq 0)$. Each particle undergoes a continuous time nearest-neighbour random walk on $\eta \Z^{\mathbf{d}}$ run at speed $\mathbf{d}\eta^{-2}$; that is, for each of the $2\mathbf{d}$ nearest neighbours $y$ of $x$ in $\eta \Z^{\mathbf{d}}$, an individual at $x$ jumps to $y$ at rate $\eta^{-2}/2$. Particles branch into three particles at rate $ \eps^{-2}$, and in this case,
\begin{equation}\label{e.mudeflv}
\mu^{\eps}_{y}(\cdot)= \delta_y \otimes K_{\eps}(\cdot-y) \otimes K_{\eps}(\cdot - y),
\end{equation}
where $K_\eps(y) = K(y/\eta)$ and $K$ is a uniform probability kernel on $[-L,L]^{\mathbf{d}} \cap \Z^{\mathbf{d}}$. Thus, one of the offspring is placed at the location of the parent and the other two are independently sampled from $K_{\eps}$, centered at the location of the parent.
 
It is a consequence of \cite[Lemma~2.3]{DH2021} and the fact that $\mathbf{d}\geq 3$ that \eqref{eq:approxdualdef} holds, that is
\begin{equation}
\E^\eps_{p}[\xi^\eps_t(x)] = Q^{\eps}_x[\V(\X^\eps_t; p) = 1] + o(1).
\end{equation}
Since $(\X^\eps_t, t\geq 0)$ has the branching property, Proposition~\ref{prop:J1} and the above imply that \eqref{j.1} holds. $\V$ is associated to the $g$-function $g(p) = 3p^2 - 2p^3$, which satisfies \eqref{g.0}-\eqref{g.5} by the same arguments as in the previous sections.

To apply Theorem~\ref{thm_J4_general}, it remains to verify that \eqref{a.1} and \eqref{a.2} hold. To do so, we first rewrite the above in the notation of our paper. The single lineage process $Y^{\eps}_t$ associated to $\X^\eps_t$, whose law we denote by $P_x^{Y,\eps}$, is simply a continuous time simple random walk on $\eta \Z^{\mathbf{d}}$ with jump rate $\mathbf{d} \eta^{-2}$. Branching events are ternary, and hence $N_0 = 3$, and they occur at rate $p_3\eps^{-2}$. The offspring are distributed according to $\mu^{\eps}_{y}$ as defined in \eqref{e.mudeflv}.
Since $K_\eps$ is supported on $[-L\eta,L\eta]^d$ and $L \in \N$ is fixed, it is immediate that $\mu_y^\eps(\cdot)$ satisfies \eqref{a.2}, so we need only verify \eqref{a.1}. This amounts to the verification that a simple random walk can be coupled with a Brownian motion with a sufficiently small error. This is handled in \cite{DH2021} using Skorokhod embedding. We point out that \cite[Lemma~2.4]{DH2021} as stated is not quite strong enough, but this is mostly because of an unnecessary bound used in the last line of its proof. One can follow the proof of that result, making small changes as necessary, to obtain the following.

\begin{lemma}\label{lemma:LV_BM}
There exists a constant $C \geq 1$ and a coupling of $Y^\eps_t$ with a Brownian motion $\bmd_t$ such that for any $T>0$, for sufficiently small $\eps>0$,
\begin{equation}
\sup_x P^{Y,\eps}_x\left[\sup_{t \in [0,T]}|\bmd_{t} - Y^\eps_t| > \eta^{1/6} \right] \leq C(1+T)\eta^{1/2}.
\end{equation}
\end{lemma}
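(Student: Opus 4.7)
The plan is to argue via Skorokhod embedding, coordinate by coordinate, following the structure of \cite[Lemma~2.4]{DH2021} but being more careful about the final step. First, decompose the continuous-time simple random walk $Y^{\eps}_t$ into $\mathbf{d}$ independent scalar components $Y^{\eps,i}_t$, each a continuous-time simple random walk on $\eta \Z$ with total jump rate $\eta^{-2}$ (jumps of $\pm \eta$ with equal probability). By a union bound over coordinates, it suffices to couple each $Y^{\eps,i}$ with a one-dimensional Brownian motion $\bmo^i$ so that $\sup_{t\leq T} |\bmo^i_t - Y^{\eps,i}_t|\leq \mathbf{d}^{-1/2}\eta^{1/6}$ fails with probability $O((1+T)\eta^{1/2})$.

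For each coordinate, write $Y^{\eps,i}_t = \eta S^i_{N^i_t}$, where $(N^i_t)_{t\geq 0}$ is a rate-$\eta^{-2}$ Poisson process and $(S^i_n)$ is a discrete-time simple random walk on $\Z$. Apply the standard Skorokhod embedding: on an appropriate probability space there is a Brownian motion $\bmo^i$ and i.i.d.\ stopping times $\tau^i_1, \tau^i_2, \dots$ with $E[\tau^i_1] = \eta^2$ and $\mathrm{Var}(\tau^i_1) \leq C\eta^4$, such that, setting $T^i_n := \tau^i_1 + \cdots + \tau^i_n$, we have $\bmo^i_{T^i_n} = \eta S^i_n$. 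Consequently $Y^{\eps,i}_t = \bmo^i_{T^i_{N^i_t}}$, and the task reduces to controlling
\[
\sup_{t\in[0,T]} \bigl|\bmo^i_t - \bmo^i_{T^i_{N^i_t}}\bigr|,
\]
which we bound by combining (a) a uniform estimate on $|T^i_{N^i_t} - t|$ with (b) the modulus of continuity of $\bmo^i$.

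For (a), Doob's $L^2$ maximal inequality applied to the martingale $t \mapsto N^i_t - t\eta^{-2}$ gives, for any $a>0$,
\[
P\bigl[\sup_{t\leq T}|\eta^2 N^i_t - t| > a\eta^2\bigr] \leq \frac{T\eta^{-2}}{a^2},
\]
and Kolmogorov's inequality applied to $n \mapsto T^i_n - n\eta^2$ gives, on the event $\{N^i_T \leq 2T\eta^{-2}\}$,
\[
P\bigl[\sup_{n\leq 2T\eta^{-2}}|T^i_n - n\eta^2| > b\bigr] \leq \frac{2CT\eta^{2}}{b^2}.
\]
Choosing $a = T^{1/2}\eta^{-3/2}$ and $b = T^{1/2}\eta^{3/4}$, both failure probabilities are $O((1+T)\eta^{1/2})$, and on the good event $\sup_{t\leq T}|T^i_{N^i_t} - t| \leq 2T^{1/2}\eta^{1/2}$ for $\eta$ small. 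For (b), use the Lévy modulus of continuity (or Dudley's entropy bound) for $\bmo^i$ on $[0, T+1]$: with probability $1-O(\eta^{1/2})$,
\[
\sup_{|s-t|\leq 2T^{1/2}\eta^{1/2},\, s,t\in[0,T+1]} |\bmo^i_s - \bmo^i_t| \leq C (1+T)^{1/2}\eta^{1/4}\sqrt{|\log \eta|},
\]
which is bounded by $\mathbf{d}^{-1/2}\eta^{1/6}$ for all sufficiently small $\eta$ (depending on $T$). Combining (a) and (b), then summing the error probabilities over the $\mathbf{d}$ coordinates, yields the claim.

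The main obstacle is accounting bookkeeping rather than any conceptual difficulty: one must simultaneously control the Poisson time change, the Skorokhod clock, and the Brownian modulus, choosing the auxiliary thresholds so that every failure probability ends up at the level $C(1+T)\eta^{1/2}$. The issue flagged in the lemma statement---namely that \cite[Lemma~2.4]{DH2021} uses a slightly loose bound in its final display---is remedied by invoking the modulus of continuity with the sharper threshold $2T^{1/2}\eta^{1/2}$ rather than a larger deterministic bound, which is precisely what makes the final error $\eta^{1/6}$ (as needed downstream) compatible with the probability estimate $C(1+T)\eta^{1/2}$.
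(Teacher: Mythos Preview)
Your proposal is correct and follows exactly the approach the paper indicates: the paper does not give its own proof, but explicitly says the result is ``handled in \cite{DH2021} using Skorokhod embedding'' and that one should ``follow the proof of that result, making small changes as necessary.'' You have supplied precisely those details---coordinatewise Skorokhod embedding, control of the Poisson and Skorokhod clocks via maximal/Kolmogorov inequalities, and a modulus-of-continuity step---and your choice of thresholds ($a=T^{1/2}\eta^{-3/2}$, $b=T^{1/2}\eta^{3/4}$) lands all failure probabilities at the required $O((1+T)\eta^{1/2})$ level while keeping the spatial error below $\eta^{1/6}$.
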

Since $\eta = \exp(-\eps^{-(2+c)})\eps$, we remark that the above is much stronger than \eqref{a.1}, and thus it implies that \eqref{a.1} holds for any $k \geq 3$. Upon verifying all of the hypothes, we can apply Theorem~\ref{t.realmain}, which yields Theorem~\ref{t.LV}.

\subsection{Nonlinear voter model perturbations}\label{s:nonlinearvoter}

We consider another class of voter models which is a continuous time version of the one considered in the work of Molofsky et al \cite{molo}, and has been considered in \cite{CoxDurrettPerkins} and \cite{DH2021}. Hereafter, we refer to it simply as the nonlinear voter model. For a precise description of the model, we refer to \cite{DH2021}. 

The flip rates in this setting are characterized by a parameter $L$ and Bernoulli parameters $a_{k}$, for $k = 0,\dots,5$, which satisfy the following:
\begin{equation}\label{e.arels}
a_{0}=0, a_{1}=1-a_{4}, a_{2}=1-a_{3}, a_{5}=1.
\end{equation}
The exact form of flip rates subject to these dynamics can be found in \cite[Example 1]{CoxDurrettPerkins}. As before,  we consider a family of processes $\xi^\eps_t$ parametrized by $\eps$. Our main result is the following convergence statement:

\begin{theorem}\label{t.nonlinearv}
Let $(\xi^\eps_t, t\geq 0)_{\eps>0}$ denote the rescaled nonlinear voter model with $\mathbf{d} \geq 3$. Then for sufficiently large $L \in \N$, there exist equilibria $a_0 = a_0(L)$, $b_0 = b_0(L)$ such that $0< a_0 < 1/2 < b_0 < 1$, and for any $p: \R^{\mathbf{d}}\rightarrow [a_0,b_0]$ defining an initial interface $\Ga_{0}$, $\E^{\eps}_{p}[\xi^{\eps}_{t}]$ converges locally uniformly to $(a_0,b_0)$-generalized MCF started from $\Ga_{0}$. 
\end{theorem}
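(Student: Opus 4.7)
My plan is to apply a generalization of Theorem \ref{t.realmain}, which accommodates the non-vanishing coalescences intrinsic to this model. First, I would describe the (approximate) dual process: at rate $\eps^{-2}$, a particle at a site of $\eta\Z^\mathbf{d}$ undergoes a perturbation event, producing $5$ offspring at $x$ and at $x + \xi_1,\dots, x+\xi_4$, where $(\xi_1,\dots,\xi_4)$ are sampled uniformly without replacement from $(\eta([-L,L]^\mathbf{d}\setminus\{0\}))$; between branch events the particles perform a continuous-time simple random walk at rate $\mathbf{d}\eta^{-2}$. The voting function $\Theta$ on the five children is determined by the coefficients $\{a_k\}_{k=0}^5$ via: if $k$ of the five leaf votes are $1$, then the parent votes $1$ with probability $a_k$. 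Relations \eqref{e.arels} ensure the relevant symmetries.

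The obstruction to a direct application of Theorem \ref{t.realmain} is that the five lineages born at a single perturbation event may coincide in location (since the four $\xi_i$ are sampled without replacement from the same box), leading to sibling coalescences that persist in the limit $\eps \to 0$ regardless of the transience of the underlying random walk. To overcome this, I would use the generalized framework promised in Section \ref{s:nonlinearvoter}, in which the scalar $g$-function from \eqref{e.gdef} is replaced by a randomized object whose distribution encodes the coalescence pattern amongst the five siblings; schematically,
\begin{equation*}
g(p) = E_{\Xi}\bigl[ E_{p}[\Theta(\mathsf{V}_1,\dots,\mathsf{V}_5) \mid \Xi] \bigr],
\end{equation*}
where the outer expectation averages over the coalescence structure $\Xi$ of a single perturbation event, and conditional on $\Xi$ one samples Bernoulli($p$) votes identified as prescribed by $\Xi$. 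The hard part of the argument is the analytic verification that, under \eqref{b.1}-\eqref{b.3} and for sufficiently large $L$, this $g$ satisfies (the appropriate generalization of) \eqref{g.0}-\eqref{g.5}, so in particular has three fixed points $0 \le a_0 < 1/2 < b_0 \le 1$ with $a_0,b_0$ stable and with the requisite symmetry $a_0 + b_0 = 1$. Bistability follows from \eqref{b.1} and \eqref{b.3}, which in the large-$L$ expansion control the sign of $g'(1/2) - 1$ and the sign of $g'''(1/2)$ respectively, ensuring a bifurcation away from the unstable equilibrium at $1/2$; the symmetry is inherited from \eqref{e.arels} and \eqref{b.2}. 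The combinatorics of conditioning on $\Xi$ make this the main technical burden.

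With $g$ and the fixed points $(a_0,b_0)$ in hand, the remaining steps are comparatively routine. Hypothesis \eqref{a.1} follows from the same Skorokhod embedding argument as Lemma \ref{lemma:LV_BM}, since the single-lineage motion is a continuous-time simple random walk on $\eta\Z^\mathbf{d}$; \eqref{a.2} is immediate because offspring displacements are bounded by $L\eta$. To connect the approximate dual to the true dual (the analog of \eqref{eq:approxdualdef}), I would use transience of simple random walk in $\mathbf{d}\geq 3$ to show that \emph{non-sibling} coalescences and coalescences between siblings from distinct branch events have vanishing probability as $\eps \to 0$, in the spirit of \cite[Lemma~2.3]{DH2021}; only the sibling coalescences from a single event need to be kept, and these are already encoded in the distribution of $\Xi$. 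The semigroup property \eqref{j.1} goes through via Proposition \ref{prop:J1} once the tree structure with coalescence annotations is fixed, and the proofs of Section \ref{s:genthm} can be adapted after re-deriving Lemma \ref{lemma:gfunMarkov} for $5$-ary branching events with random coalescence patterns. Taken together, these inputs feed into the generalized version of Theorem \ref{t.realmain} and yield convergence of $\E^\eps_p[\xi^\eps_t]$ to $(a_0,b_0)$-generalized MCF started from $\Ga_0$.
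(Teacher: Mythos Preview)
Your proposal follows essentially the same architecture as the paper: an approximate dual with $5$-ary branching, a generalized $g$-function that averages over sibling coalescence patterns, verification of \eqref{a.1}--\eqref{a.2} as in the Lotka--Volterra case, and the approximate dual relation via transience in $\mathbf{d}\geq 3$. Two points deserve attention, one a genuine gap and one a difference in technique.

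First, the gap: you treat the averaged $g$ as a single function with fixed points $a_0,b_0$, but the coalescence law of $\Xi$ depends on $\eps$. In the paper's notation the effective function is $\bar g_\eps$ (coalescence probabilities $q(\bar\xi,\pi,\eta^{-3/2})$ depend on $\eta=\eta(\eps)$), and its stable fixed points are $a_\eps,b_\eps$, not $a_0,b_0$. This forces a modification of \eqref{j.3} (one shows $a_\eps\to a_0$, $b_\eps\to b_0$ and verifies \eqref{e.j3nonlin} directly) and requires that the constants $\delta_*,c_0$ in \eqref{g.5} be chosen uniformly in $\eps$ so that the iteration in Section~\ref{ss:propagation} closes; see Lemma~\ref{lemma:gfunNLV2}. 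Without this uniformity, the induction in Proposition~\ref{prop:propagationgen} does not go through.

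Second, the technique for establishing \eqref{g.1}--\eqref{g.5}: you propose a local bifurcation argument at $p=1/2$ via the signs of $g'(1/2)-1$ and $g'''(1/2)$. The paper instead shows (Lemma~\ref{lemma:gfunNLV2}(b)) that for large $L$ the non-coalescence event dominates, so $\bar g_\eps$ is uniformly $C^3$-close to the explicit quintic $g$ of \eqref{e:NLVgfun}; the fixed-point structure is then read off from the five known roots of the quintic $\bar g_\eps(p)-p$ (namely $0,a_\eps,\tfrac12,b_\eps,1$), which in particular rules out spurious fixed points and gives stability globally rather than just near $1/2$. Your Taylor approach could in principle detect the birth of $a_\eps,b_\eps$, but it does not by itself yield the global monotonicity and uniqueness needed for \eqref{g.1} and \eqref{g.5} uniformly in $\eps$. (A minor correction: the siblings are sampled without replacement and so never coincide; the persistence of coalescences is because the five start within $O(L)$ lattice sites of each other and perform $\eta^{-3/2}\to\infty$ many steps before the first branch, so the pairwise hitting probabilities are bounded away from zero even in $\mathbf{d}\geq 3$.)
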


\begin{remark} In \cite{DH2021}, the authors require that there exists $\la>0$ such that $p\in [\la, 1-\la]$. For simplicity, we have stated Theorem \ref{t.nonlinearv} with the stronger assumption that $p\in [a_0,b_0]$. With a relatively straightforward additional argument, this can be relaxed to match their assumption on $p$, but we do not pursue this here. 
\end{remark}

In the case of the nonlinear voter model perturbation, the dual $(\hat{\X}^{\eps}_{t}, t\geq 0)$ is given in terms of a branching/coalescing random walk which branches into five children, where one lineage is understood to be the parent particle. As shown in \cite[Section 2.2.1]{DH2021}, there exists a process $(\X^{\eps}_{t}, t\geq 0)$ which is only allowed to coalesce (and not branch) on time intervals of the length $\eta^{1/2}$ immediately after branching events. Branching occurs at rate $\eps^{-2}$. At a branching event, a particle gives birth to 
4 particles, chosen uniformly without replacement from $x+([-L\eta, L\eta]^{\mathbf{d}}\cap \eta \Z^{\mathbf{d}})$ for a sufficiently large $L \in \N$, and the particle itself remains alive (and stays in the same location). In the notation as in the beginning of Section \ref{s:genthm}, $\mu^{\eps}_{y}$ is the product measure of a Dirac centered at $y$ and the distribution of sampling 4 particles uniformly from $y+([-L\eta, L\eta]^{\mathbf{d}}\cap \eta \Z^{\mathbf{d}})$ without replacement.

To maintain a tree structure, siblings which coalesce within $\eta^{1/2}$ of their birth are identified. A priori this leads to a tree which is not necessarily $5$-ary, but we follow \cite[Section 2.3]{DH2021} in keeping the $5$-ary tree and encoding the coalescences with additional information. Doing this precisely requires some work; for the time being it suffices to note that $(\X^{\eps}_{t}, t\geq 0)$ is associated to a $5$-regular tree. In this model, the $g$-function will be defined by 
\begin{align} \label{e:NLVgfun}
&g(p)=(4a_{1}-a_{4})p(1-p)^{4}+(6a_{2}-4a_{3})p^{2}(1-p)^{3}-(6a_{2}-4a_{3})p^{3}(1-p)^{2} \notag\\
& \hspace{2 cm}-(4a_{1}-a_{4})p^{4}(1-p)+p, 
\end{align}
which corresponds exactly to the voting algorithm $\V$ if there were no coalescences. However, as described in \cite{DH2021}, in this model one must take into account the impact of coalescences in the evaluation of $\V$, which necessitates a more sophisticated set-up which we describe shortly. To this end, we introduce the $\Theta$-function associated to this model, where the role of $\Theta$ is described in Section \ref{s:vote}. From the description of the dynamics above, $\Theta : \{0,1\}^5 \to [0,1]$ is given by
\begin{equation}
\Theta(v_1,v_2,v_3,v_4,v_5)  = \sum_{k=0}^5 a_k \indc_{\{\sum v_i = k\}}.
\end{equation}

In order to pursue the verification of properties needed to prove Theorem \ref{t.nonlinearv}, a more general framework is needed. This is due to the fact that for the model at hand, the approximate dual relation \eqref{eq:approxdualdef} is more subtle than the other models considered; the coalescences of the dual cannot  simply be ignored in the computation of the voting algorithm. Instead, each coalescence event in which a given subset of lineages merge has a corresponding $g$-function, and the ``effective'' $g$-function is the weighted average of these $g$-functions, each of which corresponds to a coalescence event. The situation is further complicated by the fact that the coalescence probabilities, which yield the weights in the expression of the effective $g$-function, depend on the relative locations of the offspring created in a branching event. In the case when $L$ is large, it can be shown that the effective $g$-function is uniformly close to $g$ defined in \eqref{e:NLVgfun}, uniformly in $\eps$. This implies that for $L$ large, the voting algorithm from the dual, which takes into account information about coalescences, can be approximated by a modified voting algorithm which acts on the regular tree.

Therefore, to apply the framework from Section \ref{s:vote} to this setting, we must adapt the existing framework to allow trees which contain additional information. For a full treatment of this adaptation in the setting of the nonlinear voter model, we refer the reader to \cite{xtend}. Nevertheless, the convergence result Theorem \ref{t.nonlinearv} still follows essentially from Theorem \ref{t.realmain}.

\subsection{A sexual reproduction model with fast stirring} \label{s.SRMS}
The model in this section is a sexual reproduction model in the sense that pairs of nearby individuals produce new individuals. However, there are no sexual types; any two individuals can reproduce. Besides these sexual reproduction dynamics, individuals move in space by stirring dynamics in which neighbouring sites swap states. As in the previous sections, the model with scaling parameter $\eps$ is denoted by $\xi^\eps_t(x)$ and lives on $\eta \Z^{\mathbf{d}}$.

Due to a technical condition arising with the dual process, we state our convergence theorem for a slightly restricted class of initial conditions. In particular, we assume that the function $p$ takes only two values and that the regions where it takes these values are separated by an interface $\Gamma_0$ which has upper box dimension strictly less than $\mathbf{d}$. (For a definition of this definition and its properties, see Falconer \cite{Falconer}.) Since the dimension of $\Ga_{0}$ equals $\mathbf{d}-1$ when $\Gamma_0$ is smooth, this is not a very restrictive assumption. These assumptions are likely unnecessary but simplify a calculation in verification of the approximate dual property, as we discuss after stating the theorem.

\begin{theorem}\label{t.SRSM}
Let $(\xi^\eps_t, t\geq 0)_{\eps>0}$ denote the rescaled sexual reproduction model with stirring, with $\mathbf{d} \geq 2$. Suppose that $p: \R^{\mathbf{d}}\rightarrow \{q_1,q_2\}$ defines an initial interface $\Ga_0$, where $0\leq q_1 < 1/3 < q_2 \leq 2/3$ and $\Ga_0$ has upper box dimension less than $\mathbf{d}$. Then $\E^{\eps}_{p}[\xi^{\eps}_{t}]$ converges locally uniformly to $(0,2/3)$-generalized MCF started from $\Ga_0$.
\end{theorem}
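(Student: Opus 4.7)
The plan is to verify the three hypotheses of Theorem~\ref{t.realmain} for the sexual reproduction model with fast stirring, namely: (i) the existence of a pure-branching approximate dual $\X^{\eps}_t$ satisfying the approximate dual relation \eqref{eq:approxdualdef}, (ii) the lineage/dispersion conditions \eqref{a.1}-\eqref{a.2}, and (iii) the existence of a $g$-function for the voting algorithm $\V$ satisfying \eqref{g.0}-\eqref{g.5}. Theorem~\ref{t.realmain} then yields convergence to $(a,b)$-generalized MCF where $a$ and $b$ are the stable fixed points of $g$. For this model, the reaction term $f(u) = -u + 4.5 u^2(1-u)$ in \eqref{eq:ips_pde} has zeros at $0, 1/3, 2/3$, and the associated $g$-function will have the same fixed points; thus $a=0$, $b=2/3$, and $\mu = (a+b)/2 = 1/3$. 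The hypothesis $0\leq q_1 < 1/3 < q_2 \leq 2/3$ precisely matches Definition~\ref{d.defint} relative to these equilibria, so that $p$ defines the interface $\Gamma_0$.

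First, I would recall from \cite{DN1994} and \cite{DH2021} that the true dual $\hat{\X}^{\eps}_t$ of $\xi^\eps_t$ is a branching-coalescing random walk on $\eta\Z^{\mathbf{d}}$: each lineage moves as a rate $\mathbf{d}\eta^{-2}$ simple random walk (induced by stirring), branches at rate proportional to $\beta \eps^{-2}$ by selecting a pair of nearest-neighbour ``potential parents'', and lineages coalesce when they meet. The approximate dual $\X^{\eps}_t$ is the pure-branching process obtained by deleting coalescence events between distinct families. In $\mathbf{d}\geq 2$, pairs of independent rate $\eta^{-2}$ simple random walks meet infrequently enough that coalescences between distinct families vanish in probability on any compact time interval, analogous to \cite[Lemma~2.3]{DH2021}. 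This identifies $\X^{\eps}_t$ as a pure branching process with $N_0 = 3$ and establishes the coupling side of \eqref{eq:approxdualdef}. Verifying \eqref{a.1} and \eqref{a.2} is then routine: the single lineage $Y_t$ is a continuous-time simple random walk at rate of order $\eta^{-2}$, and a Skorokhod embedding argument as in Lemma~\ref{lemma:LV_BM} yields a coupling with a $\mathbf{d}$-dimensional Brownian motion with uniform error of order $\eta^{1/6}$ on any bounded time horizon. Since $\eta = \eps\exp(-\eps^{-3})$ is stretched-exponentially small, \eqref{a.1} holds for any $k>1$. Condition \eqref{a.2} is trivial because offspring are placed at nearest-neighbour sites, so $|\xi_i| \leq \eta \ll \eps^{k+2}$ deterministically for small $\eps$.

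The $g$-function is derived in \cite{DH2021} by computing the expected vote at the root of a single branching event in $\X^{\eps}_t$ with independent Bernoulli$(p)$ leaf votes; by construction it shares its fixed points with the PDE \eqref{eq:ips_pde}, namely $0, 1/3, 2/3$. Monotonicity \eqref{g.0} follows from monotonicity of the underlying voting rule in its inputs. The stability/instability conditions \eqref{g.1} and \eqref{g.3} are direct calculations of $g'$ at the fixed points, and the symmetry condition \eqref{g.2} around $\mu = 1/3$ is a polynomial identity that can be checked directly for this model. Finally, as noted in the excerpt, \eqref{g.5} is a consequence of \eqref{g.1}-\eqref{g.3} together with $C^2$ regularity. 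With all three hypotheses of Theorem~\ref{t.realmain} verified, the conclusion follows.

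The main obstacle will be establishing \eqref{eq:approxdualdef} with the required error bounds under the particular initial condition. The restriction that $p$ takes only two values and that $\Gamma_0$ has upper box dimension strictly less than $\mathbf{d}$ enters precisely here: residual coalescences between dual lineages can affect the vote only if the affected lineages land at points where $p$ assigns different values, i.e.\ on opposite sides of $\Gamma_0$. The sub-$\mathbf{d}$ box-dimension bound ensures that only a vanishing fraction of lineages concentrate near $\Gamma_0$ on the relevant length scale, so that the voting computations on the true dual and the approximate dual agree up to $o(1)$ error, uniformly on compact space-time sets. Once this is in hand, Theorem~\ref{t.realmain} delivers the desired locally uniform convergence of $\E^{\eps}_p[\xi^{\eps}_t]$ to $(0,2/3)$-generalized MCF started from $\Gamma_0$.
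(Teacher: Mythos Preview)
Your proposal has a genuine gap concerning the structure of the dual. For the sexual reproduction model with stirring, the motion of lineages in the dual is by \emph{stirring}, not by independent random walks: adjacent particles swap positions, so the trajectories of different individuals are correlated. Removing collisions therefore does \emph{not} yield a pure branching process in the sense of Definition~\ref{d.defprocess}, and in particular Proposition~\ref{prop:J1} (hence \eqref{j.1}) and Lemma~\ref{lemma:gfunMarkov} fail for the process you call $\X^{\eps}_t$. The paper handles this by making a \emph{second} approximation: after first removing collisions to obtain a branching-stirring system $\X_t$, one replaces the stirring motion by independent nearest-neighbour random walks to get a genuine branching random walk $\X_t^*$. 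Both approximations must be controlled.

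This also means you have misidentified where the assumptions on $p$ enter. The box-dimension hypothesis is not used to control ``residual coalescences''; it is used to pass from $\X_t$ to $\X_t^*$. Under the coupling from \cite{DN1994}, the positions of individuals in $\X_t$ and $\X_t^*$ differ by at most $2\eta^{1/6}$ with high probability, but this can change the leaf votes whenever an individual lands within $2\eta^{1/6}$ of $\Gamma_0$. The two-valued assumption on $p$ reduces the question to whether any individual is that close to $\Gamma_0$; the upper box dimension bound on $\Gamma_0$ then gives $\text{Leb}(\Gamma_0^{2\eta^{1/6}}) \leq C\eta^{(\mathbf{d}-\delta)/6}$, which, combined with a heat-kernel bound and a union bound over the $O(e^{2\eps^{-2}t})$ individuals, shows this event has vanishing probability. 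Your verification of \eqref{a.1}, \eqref{a.2}, and the $g$-function properties is otherwise on the right track.
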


It again suffices to verify that there is an approximate dual process satisfying the appropriate properties. For details, including a precise description of the dual, we refer to \cite{DH2021}, where most of the arguments are made. We focus our discussion on the one point which requires additional attention, which is also the cause of the additional assumptions in Theorem~\ref{t.SRSM}.

The true dual $\hat{\X}_t $ (where we have suppressed the dependence on $\eps$) is a branching-stirring system with collisions. Once one verifies that collisions occur with vanishing probability \cite[Lemma~2.1]{DH2021}, the resulting approximate dual, denoted by $\X_t$, is a branching-stirring system which satisfies \eqref{eq:approxdualdef}. The difference between this process and a branching random walk is that, because the motion is by stirring, the spatial motions of different individuals are correlated, since individuals at adjacent sites can swap positions. This causes the failure of Lemma~\ref{lemma:gfunMarkov}, and in particular \eqref{j.1} does not hold for $\X_{t}$, the branching-stirring system. However, one can take yet another approximate dual by replacing the motion-by-stirring with independent random walks to obtain a branching random walk. The fact that this perturbation is small follows more or less from the same argument used to prove that collisions do not occur with high probability. This additional approximation (replacing stirring by independent random walk motions) is not mentioned explicitly in \cite{DH2021}, but the argument is made for the same system in a different scaling regime in \cite{DN1994}. We denote by $\X_t^*$ the branching random walk approximation to $\X_t$, obtained by taking the same branching process but allowing particles to evolve like independent branching random walks. Thus, the set $N(t)$ of living individuals at time $t$ is the same for both processes for all $t$. We observe that the random walk dynamics are such that an individual jumps to one of its $2\mathbf{d}$ nearest neighbours uniformly at rate $2\mathbf{d} \eta^{-2}$. Hereafter, we assume that $\X_t$ and $\X_t^*$ are both defined, via a particular coupling, under $Q^\eps_x$, and refer to \cite{DN1994} for the details.  

The process $\X^*_t$ is a nearest neighbour branching random walk on a lattice with mesh $\eta$ and jump rate $\mathbf{d}\eta^{-2}$, and the displacements of the individuals at branch times are also to nearest neighbour sites. The branch rate equals $\eps^{-2}$. It is therefore trivial to see that \eqref{a.2} holds, and that it also satisfies a particularly strong version of \eqref{a.1}, which we return to shortly. As shown in \cite[Section 1.4]{DH2021}, the $g$-function associated to the voting algorithm is given by 
\begin{equation*}
g(p)=\frac{9}{11}[p+p^{2}-p^{3}],
\end{equation*}
which satisfies \eqref{g.0}-\eqref{g.5}. The only missing ingredient is therefore that it is actually an approximate dual to our system. Given that this is the case for $\X_t$, we need to verify that on compact sets $K\subseteq (0, \infty)\times \R^{\mathbf{d}}$,
\begin{equation} \label{e:approxdualSR}
\sup_{K}|Q_x^\eps[ \V(\X_t; p) = 1] - Q_x^\eps[\V(\X_t^*; p) =1 ] | = o(1).
\end{equation}
Unlike the coupling of $\X_t$ to $\hat{\X}_t$, the individuals in $\X_t$ and $\X_t^*$ can have different locations, which cause the input leaf votes in the evaluation of $\V$ to have different distributions. Our choice to restrict to piecewise constant voting functions considerably simplifies this, however. We now demonstrate that \eqref{e:approxdualSR} holds. 

Let $x \in \eta \Z^{\mathbf{d}}$ and let $X_t$ and $X_t^*$ denote the non-historical versions of $\X_t$ and $\X_t^*$, all under $Q^\eps_x$. Arguing as in Section 2.b of \cite{DN1994}, in particular the arguments culminating in equation (2.8), one can show that
\begin{equation} \label{eq:SR_approxdual1}
Q_x^\eps \left[ \sup_{\alpha \in N(t)} | X_t(\alpha) - X_t^*(\alpha)| > 2 \eta^{1/6} \right]  \leq C\eta^{5/3} e^{2\eps^{-2}t} \log(t \eta^{-2}) = o(1).
\end{equation}
Therefore, up to an event with vanishing probability, the locations of the leaves, i.e. the individuals in $X_t$ and $X_t^*$, are uniformly within $2\eta^{1/6}$ of each other. To conclude, we will argue that, again with high probability, there are no individuals in $X_t^*$ within this distance of $\Gamma_0$. In particular, for $\Theta_{0}$ as in Definition \ref{d.defint}, this implies that for all $\alpha \in N(t)$, $X_t^*(\alpha) \in \Theta_0$ if and only if $X_t(\alpha) \in \Theta_0$. By our choice of $p$, we then have $p(X_t(\alpha)) = p(X_t^*(\alpha))$ for all $\alpha \in N(t)$. Thus, we can choose the coupling of $\X^*_t$ and $\X_t$ such that, on the intersection of the event described above and the event in \eqref{eq:SR_approxdual1}, $\V(\X_t; p) = \V(\X_t^*; p)$, and this proves \eqref{e:approxdualSR}. As a result, it suffices to verify that, for any $t>0$, for any starting point $x\in \eta\Z^{\mathbf{d}}$, $\dist(X_{t}^{*}(\alpha), \Ga_{0})>2\eta^{1/6}$ for all $\alpha \in N(t)$, where $\dist$ denotes the unsigned distance.

As remarked previously, the spatial motion by a random walk on $\eta \Z^{\mathbf{d}}$ can be coupled with a Brownian motion to have a small error, giving a version of the condition \eqref{a.1} for the model. In fact, this is already given by Lemma~\ref{lemma:LV_BM}. Moreover, the nearest-neighbour displacements from $O(\eps^{-2})$ branching events along a trajectory up to time $t$ contribute an error of order $\eps^{-2} \eta \ll \eta^{1/6}$. In particular, there is a Brownian motion $\bmd_t$ such that
\begin{equation}
Q^\eps_x[ |X_t(\alpha) - \bmd_t| > \eta^{1/6} \, | \, \alpha \in N(t)] \leq C(1+t)\eta^{1/2}.
\end{equation}
For $r>0$ (and $\Gamma_0$ the initial interface), let $\Gamma_0^r = \{x \in \R^{\mathbf{d}} : \dist(\Gamma_0,x) \leq r\}$. Since $\Gamma_0$ is compact and has upper box dimension $\delta < \mathbf{d}$, there is a constant $C>0$ such that for all $r \in (0,1]$,
\begin{equation} \label{eq:SR_Lebinterface}
\text{Leb}(\Gamma_0^r) \leq C r^{\mathbf{d}-\delta},
\end{equation}
where $\text{Leb}(\cdot)$ denotes the $\mathbf{d}$-dimensional Lebesgue measure. We now conclude as follows. Let $p(t,y)$ denote the $\mathbf{d}$-dimensional Brownian heat kernel. We have
\begin{align*}
Q^\eps_x&[ X_t(\alpha) \in \Gamma_{0}^{2\eta^{1/6}} \, | \, \alpha \in N(t)] \\
&\leq P_x^{\bmd}[\bmd_t \in \Gamma_{0}^{\eta^{1/6}}] + Q^\eps_x[|X_t(\alpha) - \bmd_t| > \eta^{1/6} \, | \, \alpha \in N(t)]
\\ &\leq \left(\sup_{y \in \R^{\mathbf{d}}} p(t,y)\right) \text{Leb}(\Gamma_0^{\eta^{1/6}}) + C(1+t) \eta^{1/2}
\\ &\leq C t^{-\mathbf{d}/2} \eta^{(\mathbf{d}-\delta)/6} + \eta^{1/6}.
\end{align*}
In particular, by a union bound,
\begin{align*}
Q^\eps_x[ \exists \alpha \in N(t) : X_t(\alpha) \in \Gamma^{2\eta^{1/6}} ] &\leq E^\eps_x[\#\{\alpha \in N(t) : X_t(\alpha) \in \Gamma^{2\eta^{1/6}}\}]
\\& \leq e^{2\eps^{-2}t}[C t^{-\mathbf{d}/2} \eta^{(\mathbf{d}-\delta)/6} + \eta^{1/6}].
\end{align*}
The term $e^{\eps^{-2}t}$ is the expected number of individuals at time $t$ in a continuous time branching process with ternary branches at rate $\eps^{-2}$. Recalling that $\eta = e^{-\eps^{-(2+c)}}$ (and because $\delta < \mathbf{d}$), it follows that the above vanishes uniformly on $t \in [K^{-1}, K]$ for all $K>1$. In view of the previous discussion, this proves that $\X^*_t$ is an approximate dual to the system, and we are done.

\section{Appendix} \label{s.app}

\subsection{Generalized Flows and the Generalized Level-Set Method}\label{s.app1}
We provide an overview of the theory of generalized flows and the generalized level-set method. Everything in this section is contained in the work of Barles and Souganidis \cite{BS}. Consider a general PDE of the form 
\begin{equation}\label{e.geneq}
\begin{cases}
u_{t}+F(D^{2}u, Du)=0&\text{in $(0, \infty)\times\R^{\mathbf{d}}$,}\\
u(0,x)=u_{0}&\text{in $\R^{\mathbf{d}}$,}
\end{cases}		
\end{equation}
where $F: \mathcal{S}^{\mathbf{d}}\times \R^{\mathbf{d}}\rightarrow \R$ satisfies the following:
\begin{enumerate}[(i)]
\item There exists $G:\mathcal{S}^{\mathbf{d}}\times \R^{\mathbf{d}}\rightarrow \R$ such that  
\begin{equation*}
F(M,p)=|p|G\left(\frac{1}{|p|}\left(\id-\frac{p}{|p|}\otimes \frac{p}{|p|}\right)M, \frac{p}{|p|}\right)
\end{equation*}
\item For all $p\in \R^{\mathbf{d}}$, 
\begin{equation*}
F(M,p)\leq F(N,p)\quad\text{whenever}\quad M\geq N.
\end{equation*}
\item $F$ is locally bounded. 
\end{enumerate}
We note that the case of MCF is the special case when 
\begin{equation*}
F(M,p)=|p|G\bigg(\frac{1}{|p|}\Big(\id-\frac{p}{|p|}\otimes \frac{p}{|p|}\Big)M, \frac{p}{|p|}\bigg)=-\frac{1}{2}\tr \bigg(\Big(\id-\frac{p}{|p|}\otimes \frac{p}{|p|}\Big)M\bigg).
\end{equation*}

We first begin by defining what it means for a sequence of sets $\left\{\Theta_{t}\right\}_{t\geq 0}\subseteq \R^{\mathbf{d}}$ to be a generalized flow:

\begin{definition}\label{d.genflow}
\begin{enumerate}[(i)]
\item A family of open sets $\left\{\Theta_{t}\right\}_{t\geq0}\subseteq \R^{\mathbf{d}}$ is a generalized superflow with normal velocity $-F$ if and only if for every $(t_{0}, x_{0})\in (0, \infty)\times\R^{\mathbf{d}}$, $r\in (0,1)$, and every smooth function $\phi : \R^\mathbf{d} \to \R$ satisfying $\{\phi\geq 0 \} \subseteq B(x_0,r)$ with $|D\phi(\cdot)| \neq 0$ on $\{\phi = 0\}$, there exists $h_0=h_{0}(\norm{\phi}_{C^{4}(\overline{B(x_{0},r)})}, \text{\ref{e.univconstants}})\in (0,1)$ such that for every $\al\in (0,1)$, the following holds: for 
\begin{equation*}
L^{+}_{h,\al}:=\left\{x: \phi(x) - h\left(F^{*}(D^2\phi(x), D\phi(x)) + \alpha \right) >0\right\}\subseteq \R^{\mathbf{d}},
\end{equation*}
we have that for all $h\in (0, h_{0}]$, 
 \begin{equation*}
 L^{+}_{h}\cap \overline{B(x_{0},r)}\subseteq \Theta_{t_{0}+h}.
 \end{equation*}
 \item A family of open sets $\left\{\Theta_{t}\right\}_{t\geq0}\subseteq \R^{\mathbf{d}}$ is a generalized subflow with normal velocity $-F$ if and only if for every $(t_{0}, x_{0})\in (0, \infty)\times\R^{\mathbf{d}}$, $r\in (0,1)$, and every smooth function $\phi : \R^\mathbf{d} \to \R$ satisfying $\{\phi\geq 0 \} \subseteq B(x_0,r)$ with $|D\phi(\cdot)| \neq 0$ on $\{\phi = 0\}$, there exists $h_0=h_{0}(\norm{\phi}_{C^{4}(\overline{B(x_{0},r)})}, \text{\ref{e.univconstants}})\in (0,1)$ such that for every $\al\in (0,1)$, the following holds: for 
\begin{equation*}
L^{-}_{h}=L^{0}_{h}(\al, \phi):=\left\{x: \phi(x)-h[F_{*}(D^{2}\phi(x), D\phi(x))-\al]<0\right\},
\end{equation*}
we have that for all $h\in (0, h_{0}]$, 
\begin{equation*}
 L^{-}_{h}\cap \overline{B(x_{0},r)}\subseteq \overline{\Theta}_{t_{0}+h}^{c}.
 \end{equation*}
 \end{enumerate}
 We say that $\left\{\Theta_{t}\right\}_{t\geq 0}$ is a generalized flow with normal velocity $-F$ if and only if it is both a generalized sub and super flow. 
 \end{definition}

The main result we will invoke is the following:
\begin{theorem}\cite[Theorem 3.1]{BS}\label{t.bs1}, 
 Assume \eqref{j.1}-\eqref{j.4} hold. For $t>0$, let 
 \begin{equation*}
 \begin{aligned}
& \Theta^{b}_{t}:=\Big\{x\in \R^{\mathbf{d}}: \liminfs_{\eps\to 0} u^{\eps}(t,x;p)=b\Big\},\, \text{and}\\
& \Theta^{a}_{t}:=\left\{x\in \R^{\mathbf{d}}: \limsups_{\eps\to 0} u^{\eps}(t,x;p)=a\right\},
 \end{aligned}
 \end{equation*}
 where we recall the definition of $\liminfs, \limsups$ from Definition \ref{def:hrlimit}.
 Define 
 \begin{align*}
 \Theta^{b}_{0}=\bigcap_{t>0} \left(\bigcup_{0\leq h\leq t}  \Theta^{b}_{h}\right)\quad \text{and}\quad \Theta^{a}_{0}=\bigcap_{t>0} \left(\bigcup_{0\leq h\leq t}  \Theta^{a}_{h}\right)
  \end{align*}
  If $\Theta^{b}_{0}$ (resp $\Theta^{a}_{0}$) is nonempty, then $\Theta^{b}_{t}$ (resp. $\Theta^{a}_{t}$) is nonempty for sufficiently small $t$, and 
  \begin{align*}
  \text{$\left\{\Theta^{b}_{t}\right\}_{t\geq 0}$ is a generalized super-flow}\\
  \text{$\left\{\overline{\Theta^{a}_{t}}^{c}\right\}_{t\geq 0}$ is a generalized sub-flow}
  \end{align*}
with normal velocity $-F$. 
 \end{theorem}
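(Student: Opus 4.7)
The strategy is to verify the super-flow property for $\{\Theta^b_t\}_{t \geq 0}$ directly from \eqref{j.1}--\eqref{j.4}; the sub-flow property for $\{\overline{\Theta^a_t}^c\}_{t \geq 0}$ follows by an entirely symmetric argument using $p^+(\phi, \delta)$, $\limsups$, and \eqref{j.4}(ii). Fix $(t_0, x_0, r, \phi)$ as in Definition \ref{d.genflow}(i), and assume the (implicit) compatibility $\{\phi \geq 0\} \cap \overline{B(x_0, r)} \subseteq \Theta^b_{t_0}$, without which the super-flow conclusion is vacuous. Let $\bar{\delta}$ and $h_0 = h_0(\|\phi\|_{C^4(\overline{B(x_0,r)})}, \eqref{e.univconstants})$ be provided by \eqref{j.4}(i); the goal is to show that for every $\alpha \in (0,1)$ and $h \in (0, h_0]$, $L^+_{h,\alpha} \cap \overline{B(x_0, r)} \subseteq \Theta^b_{t_0 + h}$.

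The first step obtains a uniform lower bound on $u^\eps(t_0, \cdot; p)$ over the compact set $K := \{\phi \geq 0\} \cap \overline{B(x_0, r)}$. By \eqref{j.2} and \eqref{j.3}, $a \leq u^\eps \leq b$ globally, so $\liminfs u^\eps(t_0, \cdot; p) \leq b$ everywhere and, by the compatibility assumption, equals $b$ on $K$. A compactness argument (if no uniform bound existed, extract a subsequence $\eps_n \to 0$, $y_n \in K$ with $u^{\eps_n}(t_0, y_n; p) < b - \delta$ along some $y_n \to y_\infty \in K$, contradicting the definition of $\liminfs$ at $y_\infty$) shows that for any $\delta \in (0, \bar{\delta}]$, there is $\eps_1 = \eps_1(\delta) > 0$ with $u^\eps(t_0, \cdot; p) \geq b - \delta$ on $K$ for all $\eps < \eps_1$. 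Since $\{\phi \geq 0\} \subseteq B(x_0, r)$ gives $\{\phi \geq 0\} \subseteq K$, combining with the universal bound $u^\eps \geq a$ produces the pointwise comparison $u^\eps(t_0, \cdot; p) \geq p^-(\phi, \delta)$ on $\R^\mathbf{d}$ for all $\eps < \eps_1$.

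The second step chains \eqref{j.1}, \eqref{j.2}, and \eqref{j.4}(i) together. The semigroup property \eqref{j.1} gives $u^\eps(t_0 + h, y; p) = u^\eps(h, y; u^\eps(t_0, \cdot; p))$, and monotonicity \eqref{j.2} applied to the comparison above yields
\[ u^\eps(t_0 + h, y; p) \geq u^\eps(h, y; p^-(\phi, \delta)) \]
for every $y \in \R^\mathbf{d}$ and every $\eps < \eps_1$. Fix $\alpha \in (0,1)$, $h \in (0, h_0]$, and $y \in L^+_{h, \alpha} \cap B(x_0, r)$. For any sequence $\eps_n \to 0$ and $(h_n, y_n) \to (h, y)$, passing the displayed inequality to the liminf and invoking \eqref{j.4}(i) produces
\[ \liminf_{n \to \infty} u^{\eps_n}(t_0 + h_n, y_n; p) \geq \liminfs u^\eps(h, y; p^-(\phi, \delta)) = b. \]
Taking the infimum over such sequences gives $\liminfs u^\eps(t_0 + h, y; p) \geq b$; the reverse inequality is automatic, so equality holds and $y \in \Theta^b_{t_0 + h}$. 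This establishes $L^+_{h, \alpha} \cap B(x_0, r) \subseteq \Theta^b_{t_0 + h}$, and points on $L^+_{h,\alpha} \cap \partial B(x_0, r)$ are absorbed by a standard slight enlargement of $r$ together with a corresponding extension of $\phi$.

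For the nonemptiness claim, given $x_0 \in \Theta^b_0$ there is a sequence $h_n \downarrow 0$ with $x_0 \in \Theta^b_{h_n}$; applying the super-flow property just established at $(h_n, x_0)$ with a sufficiently flat smooth bump $\phi$ around $x_0$ propagates nonemptiness to $\Theta^b_{h_n + h}$ for $h \in (0, h_0]$, which covers an entire interval of small times. The principal technical obstacle is the first step, namely extracting a uniform lower bound over $K$ from the pointwise half-relaxed equality $\liminfs u^\eps(t_0, \cdot; p) = b$. This does not follow from any pointwise convergence of $u^\eps$ (which need not hold) and relies essentially on the lower semicontinuity intrinsic to the $\liminfs$ envelope together with compactness of $K$; once this is in place, everything else is a direct concatenation of the hypotheses.
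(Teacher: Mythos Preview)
The paper does not give its own proof of this statement: Theorem~\ref{t.bs1} is simply quoted from \cite[Theorem~3.1]{BS} in the Appendix, and the paper uses it (together with Corollary~\ref{c.bs2}) as a black box to derive Theorem~\ref{t.generaldual}. There is therefore no ``paper's proof'' to compare to.

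That said, your sketch is essentially the Barles--Souganidis argument and the main chain of implications is correct. The compactness argument in your first step is the right way to pass from the pointwise identity $\liminfs u^\eps(t_0,\cdot;p)=b$ on $K$ to a uniform lower bound $u^\eps(t_0,\cdot;p)\geq b-\delta$ on $K$ for small $\eps$; the contradiction you describe is exactly how the half-relaxed lower limit is designed to interact with compactness. The concatenation \eqref{j.1}--\eqref{j.2}--\eqref{j.4}(i) in your second step is also correct, and your handling of the half-relaxed limit via arbitrary sequences $(h_n,y_n)\to(h,y)$ is the right level of care. Two small points you pass over: (i) the definition of generalized super-flow requires the sets $\Theta^b_t$ to be open, which is not automatic from lower semicontinuity of $\liminfs u^\eps$ (indeed $\{\liminfs u^\eps=b\}$ is a priori closed); in \cite{BS} this is obtained as a byproduct of the propagation argument itself, applied with $\phi$ localized near a single point of $\Theta^b_{t_0}$; (ii) your nonemptiness argument implicitly uses this openness to fit $\{\phi\geq 0\}$ inside $\Theta^b_{h_n}$. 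Neither point is a genuine gap, but both deserve a sentence if you want a self-contained proof.
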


\begin{remark}
    Observe that if $p:\R^{\mathbf{d}}\rightarrow [a,b]$, and $p$ defines an initial interface $\Ga_{0}$ as in Definition \ref{d.defint}, then the sets
    \begin{equation*}
        \Theta^{b}_{0}=\left\{p(\cdot)>\mu\right\}\quad\text{and}\quad \Theta^{a}_{0}=\left\{p(\cdot)<\mu\right\}. 
    \end{equation*}
  are open and nonempty.
\end{remark}
 
 It turns out that this definition of an abstract flow is closely related to the generalized level-set front propagation introduced in Section \ref{s:abstract}. Recall the solution $u$ solving 
\begin{equation*}
\begin{cases}
u_{t}+F(D^{2}u, Du)=0&\text{in $(0, \infty)\times \R^{\mathbf{d}}$},\\
u(0,x)=d(0,x)&\text{in $\R^{\mathbf{d}}$},
\end{cases}
\end{equation*}
where $d(0,x)$ is the signed distance function to $\Ga_{0}$. 

We will refer to the triplet $(\Ga_{t}, \Theta^{+}_{t}, \Theta^{-}_{t})$ as the generalized level-set evolution when 
\begin{align*}
\Ga_{t}:=\left\{x: u(t,x)=0\right\},\quad \Theta^{+}_{t}:=\left\{x: u(t,x)>0\right\},\quad \Theta^{-}_{t}:=\left\{x: u(t,x)<0\right\}.
\end{align*}

A corollary of this fact is to relate generalized flows to the generalized level-set evolution. 
\begin{corollary}\cite[Corollary 3.1]{BS}\label{c.bs2}
Assume \eqref{j.1}-\eqref{j.4} and that $\Theta^{b}_{0}=(\overline{\Theta^{a}_{0}})^{c}.$ Let $(\Ga_{t}, \Theta^{+}_{t}, \Theta^{-}_{t})_{t\geq 0}$ be the generalized level set evolution of $(\partial \Theta^{b}_{0}, \Theta^{b}_{0}, \Theta^{a}_{0})$ with normal velocity $-F$. Then for all $t\geq 0$, 
\begin{equation*}
\Theta^{+}_{t}\subseteq \Theta^{b}_{t}\subseteq \Theta^{+}_{t}\cup \Ga_{t}\quad\text{and}\quad \Theta^{-}_{t}\subseteq \Theta^{a}_{t}\subseteq \Theta^{-}_{t}\cup \Ga_{t}.
\end{equation*}

\end{corollary}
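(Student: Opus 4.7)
The plan is to derive the corollary by combining Theorem~\ref{t.bs1} with the classical comparison principle between generalized sub/super-flows and the generalized level-set evolution, both due to \cite{BS}. By Theorem~\ref{t.bs1}, under the hypotheses of the corollary, $\{\Theta^{b}_{t}\}_{t\geq 0}$ is a generalized super-flow and $\{\overline{\Theta^{a}_{t}}^{c}\}_{t\geq 0}$ is a generalized sub-flow, each with normal velocity $-F$. The central ingredient I will invoke from \cite{BS} is that the level-set triple $(\Ga_t, \Theta^+_t, \Theta^-_t)$ built from the viscosity solution $u$ of \eqref{e.mcfd} is extremal: any generalized super-flow $\{\Omega_t\}$ with $\Omega_0 \supseteq \Theta^+_0$ satisfies $\Omega_t \supseteq \Theta^+_t$ for every $t\geq 0$, and any generalized sub-flow $\{\Omega_t\}$ with $\Omega_0 \subseteq \Theta^+_0 \cup \Ga_0$ satisfies $\Omega_t \subseteq \Theta^+_t \cup \Ga_t$ for every $t\geq 0$. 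These inequalities are proved in \cite{BS} by testing the viscosity subsolution/supersolution inequalities for $u$ against the bump functions $\phi$ appearing in Definition~\ref{d.genflow}.

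Because $u_{0} = d(0,\cdot)$ is the signed distance to $\Gamma_{0} = \partial \Theta^{b}_{0}$, we have $\Theta^+_0 = \Theta^b_0$ and $\Theta^-_0 = \Theta^a_0$. The hypothesis $\Theta^b_0 = \overline{\Theta^a_0}^c$ then gives $\Theta^b_0 \supseteq \Theta^+_0$ (in fact equality) and $\overline{\Theta^a_0}^c = \Theta^+_0 \subseteq \Theta^+_0 \cup \Ga_0$, so the comparison principle applies to both flows produced by Theorem~\ref{t.bs1}, yielding
\begin{equation*}
\Theta^+_t \subseteq \Theta^b_t \quad \text{and} \quad \overline{\Theta^a_t}^c \subseteq \Theta^+_t \cup \Ga_t, \,\, \text{equivalently} \,\, \Theta^-_t \subseteq \overline{\Theta^a_t}.
\end{equation*}

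The first inclusion above is precisely $\Theta^+_t \subseteq \Theta^b_t$. For the complementary inclusion $\Theta^b_t \subseteq \Theta^+_t \cup \Ga_t$, I will use that this is equivalent to $\Theta^b_t \cap \Theta^-_t = \emptyset$. By the sub-flow comparison it suffices to show $\Theta^b_t \cap \overline{\Theta^a_t} = \emptyset$. Since $\limsups u^{\eps} \geq \liminfs u^{\eps}$, the sets $\Theta^a_t$ and $\Theta^b_t$ are disjoint; combined with the openness of $\Theta^b_t$ (which is inherent in the formulation of Theorem~\ref{t.bs1} as a generalized flow on open sets, and which one can alternatively enforce by passing to the interior), this forces disjointness from the closure $\overline{\Theta^a_t}$. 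Finally, the inclusions $\Theta^-_t \subseteq \Theta^a_t$ and $\Theta^a_t \subseteq \Theta^-_t \cup \Ga_t$ follow by repeating the argument after exchanging the roles of $a$ and $b$; this is legitimate because the MCF operator $F$ from \eqref{e.Fdef} is invariant under $p \mapsto -p$, so the level-set flow is invariant under $u \mapsto -u$, which swaps $\Theta^+_t$ with $\Theta^-_t$, while Theorem~\ref{t.bs1} applied symmetrically provides $\{\Theta^a_t\}$ as super-flow and $\{\overline{\Theta^b_t}^c\}$ as sub-flow.

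The main obstacle is the invocation of the comparison between generalized flows and the level-set evolution, which is a foundational but delicate result from \cite{BS}; the rest of the argument is essentially topological bookkeeping to convert the inclusions involving closures into the sharper inclusions involving $\Ga_t$. The subtle point worth being careful about is reconciling the potentially rough topological structure of the half-relaxed limit sets $\Theta^{a,b}_t$ with the open-set formulation of generalized flows in Definition~\ref{d.genflow}, but this is handled in \cite{BS} by working with natural open modifications of these sets that do not alter the validity of the super/sub-flow properties.
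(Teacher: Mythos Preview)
The paper does not prove this corollary; it is quoted verbatim from \cite{BS} as \cite[Corollary~3.1]{BS}, with no argument supplied beyond the citation. So there is no ``paper's own proof'' to compare against.

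Your sketch is the correct outline of how the result is obtained in \cite{BS}: one feeds the output of Theorem~\ref{t.bs1} into the comparison principle between generalized super/sub-flows and the level-set evolution, and then does topological bookkeeping. Two technical points are worth flagging. First, your claim that $\Theta^b_t$ is open is not obviously right from the definitions in this paper: $\Theta^b_t$ is the set where the lower half-relaxed limit attains its maximum value $b$, i.e.\ a superlevel set of a lower semicontinuous function, hence closed rather than open. In \cite{BS} this is handled by working with suitable open modifications (as you allude to at the end), but the disjointness argument $\Theta^b_t \cap \overline{\Theta^a_t} = \emptyset$ as you wrote it does not go through directly. Second, the symmetry step (``exchanging the roles of $a$ and $b$'') is fine for the MCF operator but requires knowing that Theorem~\ref{t.bs1} can be applied with the roles reversed, which in \cite{BS} comes from the fact that (J4)(i) and (J4)(ii) are symmetric hypotheses; you should cite that rather than invoke $F(-p)=F(p)$.

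In short: your strategy matches the approach in \cite{BS}, and since the present paper defers entirely to \cite{BS} for this result, your sketch is appropriate, modulo the openness subtlety you already flagged.
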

The proof of Theorem \ref{t.generaldual} is now a direct consequence Theorem \ref{t.bs1},  Corollary \ref{c.bs2} and \eqref{eq:approxdualdef}.
\begin{proof}[Proof of Theorem \ref{t.generaldual}]
By Theorem \ref{t.bs1} and  Corollary \ref{c.bs2}, for $u^{\eps}$ defined by \eqref{e.uepdef}, we have 
\begin{equation*}
\lim_{\eps\to 0}u^{\eps}(t,x; p)=\begin{cases} a&\text{locally uniformly in $\bigcup_{t>0} \left\{t\right\}\times \Theta^{-}_{t}$,}\\
b&\text{locally uniformly in $\bigcup_{t>0} \left\{t\right\}\times \Theta^{+}_{t}$}. 
\end{cases}
\end{equation*}
Now by \eqref{eq:approxdualdef}, $\E^{\eps}_{p}[w^\eps_{t}(x)]$ converges locally uniformally to generalized $(a,b)$-MCF started from $\Ga_{0}$(as in Definition \ref{d.lumcf}).

\end{proof}

\section*{Acknowledgements}
We thank Rick Durrett, Xiangying Huang, and Sarah Penington for helpful discussions throughout the completion of this work. TH was partially supported by NSERC Postdoctoral Fellowship 545729-2020 and NSERC Discovery Grant 247764. JL was partially supported by NSERC Discovery Grant 247764, FRQNT Grant 250479, and the Canada Research Chairs program. 

\small
\bibliographystyle{abbrv}
\bibliography{mcffp}

\end{document}